\def\eod{\vrule height 6pt width 5pt depth 0pt}
\newenvironment{proof}{\noindent {\bf Proof:} \hspace{.2em}}
                      {\hspace*{\fill}{\eod}}
\newcommand{\vsp}{\vskip 1em}
\newtheorem{theorem}{Theorem}
\newtheorem{lemma}[theorem]{Lemma}
\newtheorem{corollary}[theorem]{Corollary}
\newtheorem{example}[theorem]{Example}
\newtheorem{remark}[theorem]{Remark}
\newcommand{\GTS}{\mathsf{GTS}}
\newcommand{\ED}{ \mathsf{ED}}
\newcommand{\od}{ \overline{\mathrm{imm}}}
\newcommand{\odhk}{ \overline{\mathrm{hook\_imm}}}
\newcommand{\odtr}{ \overline{\mathrm{TwoRow\_imm}}}
\newcommand{\chitr}{ \mathrm{TwoRow\_\chi}}
\newcommand{\last}{ \mathrm{last}}
\newcommand{\sL}{  \mathcal{ L}}
\newcommand{\sO}{  \mathcal{ O}}
\newcommand{\hk}{ \mathrm{hook}}
\newcommand{\TwoRow}{ \mathrm{TwoRow}}
\newcommand{\RR}{ \mathbb{R}}
\newcommand{\CC}{ \mathbb{C}}
\newcommand{\NN}{ \mathbb{N}}
\newcommand{\NLP}{ \mathrm{NLP}}
\newcommand{\UHD}{ \mathrm{UHD}}
\newcommand{\RTP}{ \mathrm{GRP}}
\newcommand{\nhalf}{\lfloor n/2 \rfloor}
\newcommand{\nmhalf}{\lfloor (n-1)/2 \rfloor}
\newcommand{\fby}{\fcolorbox{blue}{yellow}}
\newcommand{\fyc}{\fcolorbox{blue}{cyan}}
\newcommand{\fya}{\fcolorbox{blue}{green}}
\newcommand{\comment}[1]{}
\newcommand{\red}[1]{\textcolor{red}{#1}}
\newcommand{\SYT}{ \mathsf{SYT}}
\newcommand{\DES}{ \mathrm{DES}}
\newcommand{\rowdiff}{ \mathrm{RowDiff}}
\newcommand{\SSS}{\mathfrak{S}}
\newcommand{\law}{\mathsf{Lexaway}}
\newcommand{\id}{\mathsf{id}}
\begin{document}
\title{Inequalities among two rowed immanants of the 
$q$-Laplacian of Trees and Odd height peaks in generalized
Dyck paths}

\author{
Arbind Kumar Lal\\
Department of Mathematics and Statistics\\
Indian Institute of Technology, Kanpur\\
Kanpur 208 016, India.\\
email: arlal@iitk.ac.in
\and
Mukesh Kumar Nagar\\
Department of Mathematics and Statistics\\
Indian Institute of Technology, Kanpur\\
Kanpur 208 016, India.\\
email: mukesh.kr.nagar@gmail.com
\and
Sivaramakrishnan Sivasubramanian\\
Department of Mathematics\\
Indian Institute of Technology, Bombay\\
Mumbai 400 076, India.\\
email: krishnan@math.iitb.ac.in
}

\maketitle

\begin{center}
{\it To the memory of Arbind Lal}
\end{center}

\begin{abstract}
Let $T$ be a tree on $n$ vertices and let $\sL_q^T$ be the $q$-analogue of its 
Laplacian.  For a partition $\lambda \vdash n$, let the normalized 
immanant of $\sL_q^T$ indexed by $\lambda$ be denoted as $\od_{\lambda}(\sL_q^T)$.
A string of inequalities among  $\od_{\lambda}(\sL_q^T)$ is known when
$\lambda$ varies over hook partitions of $n$ 
as the size of the first part of $\lambda$ decreases.  In this 
work, we show a similar sequence of inequalities when $\lambda$ varies over
two row partitions of $n$ as the size of the first part of 
$\lambda$ decreases.  Our main lemma is an identity involving
binomial coefficients and 
irreducible character values of $\SSS_n$ indexed by two row 
partitions.

Our proof can be interpreted using the combinatorics 
of Riordan paths and our main lemma admits a nice probabilisitic 
interpretation involving peaks at odd heights in generalized Dyck
paths or equivalently involving special descents in Standard 
Young Tableaux with two rows.  As a corollary, we 
also get inequalities between $\od_{\lambda_1}(\sL_q^{T_1})$ and 
$\od_{\lambda_2}(\sL_q^{T_2})$ when $T_1$ and $T_2$ are comparable
trees in the $\GTS_n$ poset and when $\lambda_1$ and $\lambda_2$
are both two rowed partitions of $n$, with $\lambda_1$ having 
a larger first part than $\lambda_2$.
\end{abstract}

{\bf Keywords :} Normalized immanant, $q$-Laplacian matrix, Tree, Dyck
Paths, Riordan Paths.

\vsp

{\it AMS Subject Classification:} 05C05, 05A19, 15A15

\section{Introduction}
\label{sec:intro}
Immanants of positive semi definite matrices and their generalizations have 
been a topic of interest since Schur \cite{schur-immanant-ineqs}.  Let 
$A = (a_{i,j})_{1 \leq i,j \leq n}$ be an $n \times n$  matrix.  
For a partition 
$\lambda \vdash n$, let $\chi_{\lambda}$ denote the character of the 
irreducible representation of the symmetric group 
$\SSS_n$ over $\CC$ indexed by $\lambda$.
We think of $\chi_{\lambda}: \SSS_n \mapsto \CC$ as a function from 
$\SSS_n$ to $\CC$ and for $\pi \in \SSS_n$ let $\chi_{\lambda}(\pi)$ 
denote the value of $\chi_{\lambda}$ on the permutation $\pi$.  Let
$\id \in \SSS_n$ denote the identity permutation in $\SSS_n$.
For a partition $\lambda \vdash n$, define the normalized immanant of 
$A$ as 
\begin{equation}
\label{eqn:norm_immanant_defn}
\od_{\lambda}(A) = \frac{1}{\chi_{\lambda}(\id)} \sum_{\pi \in \SSS_n} 
\chi_{\lambda}(\pi) \prod_{i=1}^n a_{i,\pi(i)}.
\end{equation}

Let $T$ be a tree on $n$ vertices and let $\sL_q^T$ be the 
$q$-analogue of its 
Laplacian (see Section \ref{sec:prelims} for definitions).  For 
$\lambda \vdash n$, let the normalized immanant of $\sL_q^T$ 
indexed by $\lambda$ be denoted as $\od_{\lambda}(\sL_q^T)$.  As done
usually, when
parts of a partition are repeated, we write such a partition with
the multiplicity as an exponent of that part.  Thus $\lambda = 1^n$ 
denotes the partition $\lambda = 1,1,\ldots,1$ with the part 
1 having multiplicity $n$.  Partitions of the form $\lambda = k,1^{n-k}$
are called hook partitions and when $n$ is clear, are denoted as
$\hk_k$.  Denote the normalized immanant of $\sL_q^T$ 
indexed by $\hk_k$ as $\odhk_k(\sL_q^T)$.  
For any tree $T$ with $n$ vertices, the following sequence of 
inequalities involving $\odhk_k(\sL_q^T)$'s was shown  by
Nagar and Sivasubramanian 
(see \cite[Lemma 27 and Theorem 2]{mukesh-siva-hook}).

\begin{theorem}[Nagar, Sivasubramanian]
\label{thm:hook_imm_ineqs}
Let $T$ be a tree on $n$ vertices with $q$-Laplacian $\sL_q^T$.
Then, for $2 \leq k \leq n$ and for all $q \in \RR$, the normalized 
immanants of $\sL_q^T$ satisfy the following:
\begin{equation}
\label{eqn:hook_imm_ineq}
\odhk_{k-1}(\sL_q^T) \leq  \odhk_k(\sL_q^T).
\end{equation}
The following version (which is stronger than 
\eqref{eqn:hook_imm_ineq} when $|q| > 1$) also holds 
%{\bf This second part is actually un-necessary}
\begin{equation*}
\odhk_{k-1}(\sL_q^T) + \frac{q^2-1}{k-1} \leq \frac{k-2}{k-1} \> \>  \odhk_k(\sL_q^T).
\end{equation*}
\end{theorem}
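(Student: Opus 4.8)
The plan is to use the sparsity of $\sL_q^T$ to rewrite the immanant as a sum over partial matchings of $T$, to plug in explicit hook character values, and to reduce both inequalities to a combinatorial statement about weighted matchings in trees. For the first step, $\sL_q^T$ has the nonzero pattern of $T$ together with its diagonal: its $(u,v)$ entry vanishes unless $u=v$ or $uv\in E(T)$, the diagonal entry at a vertex $v$ is $1+q^2(d_v-1)\ge 1$ where $d_v=\deg_T v$, and the two entries on an edge multiply to $q^2$. Since a tree has no cycles, the only $\pi\in\SSS_n$ with $\prod_i(\sL_q^T)_{i,\pi(i)}\ne0$ are the involutions supported on edges; such a $\pi$ is precisely a partial matching $M$ of $T$, it has cycle type $2^{|M|}1^{\,n-2|M|}$, and $\prod_i(\sL_q^T)_{i,\pi(i)}=q^{2|M|}\prod_{v\notin V(M)}(1+q^2(d_v-1))=:W_M\ge0$ for all real $q$. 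Hence
\[
\odhk_k(\sL_q^T)=\frac{1}{\binom{n-1}{k-1}}\sum_{M}\chi_{\hk_k}\bigl(2^{|M|}1^{\,n-2|M|}\bigr)\,W_M ,
\]
and in particular $\odhk_1(\sL_q^T)=\det\sL_q^T=1-q^2$, which will anchor the eventual induction.

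Second, the hook character values have a clean closed form. The $j$-th exterior power of the standard representation of $\SSS_n$ is the irreducible indexed by the hook $(n-j,1^j)$, so the alternating sum of exterior powers of the permutation module $\CC^n$ gives $\chi_{(n-j,1^j)}(\pi)=\sum_{i\ge0}(-1)^i e_{j-i}\bigl(\text{eigenvalues of }\pi\text{ on }\CC^n\bigr)$. For $\pi$ of cycle type $2^{m}1^{\,n-2m}$ the eigenvalue multiset is $n-m$ ones and $m$ minus-ones, which collapses the sum to
\[
\chi_{\hk_k}\bigl(2^{m}1^{\,n-2m}\bigr)=[t^{\,n-k}]\,(1+t)^{\,n-1-m}(1-t)^{m},\qquad \binom{n-1}{k-1}=[t^{\,n-k}](1+t)^{\,n-1}\ (\text{the }m=0\text{ case}).
\]

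Third, I would reduce to a combinatorial inequality. Only even powers of $q$ occur, and $q^{2\ell}\ge0$, so it suffices to prove each asserted inequality coefficientwise in $q^2$. Expanding $\prod_{v\notin V(M)}(1+q^2(d_v-1))$ and collecting gives
\[
[q^{2\ell}]\,\odhk_k(\sL_q^T)=\binom{n-1}{n-k}^{-1}\sum_{m=0}^{\ell}c_{m,\,n-k}\,U_{m,\ell}(T),\qquad c_{m,j}:=[t^j](1+t)^{n-1-m}(1-t)^m,
\]
where $U_{m,\ell}(T):=\sum_{|M|=m}\ \sum_{S\subseteq V\setminus V(M),\,|S|=\ell-m}\ \prod_{v\in S}(d_v-1)\ge0$. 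Writing $a_{m,r}:=c_{m,r}/\binom{n-1}{r}$ for the normalized hook character on $2^{m}1^{\,n-2m}$, the weak inequality $\odhk_{k-1}\le\odhk_k$ is equivalent to $\sum_m(a_{m,\,n-k}-a_{m,\,n-k+1})\,U_{m,\ell}(T)\ge0$ for every $\ell$, and the stronger inequality to the same statement with the affine correction dictated by $\odhk_1=1-q^2$. One computes at once that $a_{0,r}\equiv1$ and $a_{1,r}=\frac{n-1-2r}{n-1}$, so the $m=0$ term drops out and the $m=1$ term contributes the favourable constant $\frac{2}{n-1}$; the difficulty is that for $m\ge2$ the differences $a_{m,\,n-k}-a_{m,\,n-k+1}$ can be negative, so the inequality genuinely requires the relations among the tree quantities $U_{m,\ell}(T)$.

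The main obstacle is precisely this last inequality, uniformly in $\ell$ and in $T$. I would establish it by induction on $n$ via leaf deletion: pick a leaf $\ell_0$ with neighbour $w$ and split the sum over partial matchings according to whether $\ell_0$ is unmatched (then $M$ is a matching of $T-\ell_0$, with $d_w$ decreased by one) or the edge $\ell_0w$ is used (then $M\setminus\{\ell_0w\}$ is a matching of $T-\ell_0-w$, with a factor $q^2$). This recursion meshes with the branching rule for hooks — again a sum of at most two hooks, $\chi_{\hk_k}\!\downarrow_{\SSS_{n-1}}=\chi_{(k-1,1^{\,n-k})}+\chi_{(k,1^{\,n-k-1})}$ — and with the Pascal identity $\binom{n-1}{k-1}=\binom{n-2}{k-2}+\binom{n-2}{k-1}$ used to renormalize; the base cases $n=1$ and the single edge (where $\sL_q$ and all its immanants are explicit) are immediate. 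The two delicate points I anticipate are: tracking the factor $1+q^2(d_w-1)$ correctly, since deleting $\ell_0$ changes it and this interacts with whether $w$ itself is matched; and arranging the induction so that the \emph{stronger} affine inequality — not merely the weak one — is what propagates, since that is what forces the exact constants $\frac{q^2-1}{k-1}$ and $\frac{k-2}{k-1}$ and is ultimately anchored at $k=2$ by $\odhk_1(\sL_q^T)=1-q^2$. Finally, for $|q|<1$ the weak inequalities are also a special case of the classical monotonicity of normalized hook immanants on positive semidefinite matrices, since in that range $\sL_q^T$ is positive definite (up to the positive scalar $1-q^2$ it is the inverse of the $q$-exponential distance matrix of $T$); but the full range $q\in\RR$ and the sharper form need the combinatorial argument above.
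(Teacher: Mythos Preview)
Your setup is correct: in a tree the only permutations contributing to the immanant are edge-supported involutions, so the matching expansion and the closed form $\chi_{\hk_k}(2^m1^{n-2m})=[t^{n-k}](1+t)^{n-1-m}(1-t)^m$ are both right. The divergence from the paper comes at the next step. This paper does not itself prove Theorem~\ref{thm:hook_imm_ineqs}---it is quoted from \cite{mukesh-siva-hook}---but it records that paper's framework as Lemma~\ref{lem:prelim_part} and then reuses it verbatim for the two-row analogue. That framework is precisely the change of basis you are missing: instead of grouping by matching size $m$ with coefficient $\chi_\lambda(2^m1^{n-2m})$, one groups by a vertex-orientation index $i$ with coefficient $2^i\alpha_{n,\lambda,i}=\sum_{j\le i}\binom{i}{j}\chi_\lambda(j)$. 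In terms of your quantities $b_m^T(q):=\sum_{|M|=m}W_M$ the two bases are related by $a_i^T(q)=\sum_{m\ge i}(-1)^{m-i}\binom{m}{i}b_m^T(q)$, and the crucial structural fact (proved combinatorially in \cite{mukesh-siva-hook}, see also the discussion after \eqref{eqn:main}) is that for $i\ge1$ each $a_i^T(q)$ is a polynomial in $q^2$ with \emph{nonnegative} coefficients. That single positivity statement factors the tree out entirely: the immanant inequality then reduces to the tree-independent comparison of $\alpha_{n,\lambda,i}/\alpha_{n,\lambda,0}$ as $\lambda$ varies, which is handled by pure binomial/character manipulations (for hooks in \cite{mukesh-siva-hook}, for two rows in Lemma~\ref{lem:main_ineq} here).

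Your plan instead keeps tree and characters entangled and proposes leaf-deletion induction on $\sum_m(a_{m,n-k}-a_{m,n-k+1})U_{m,\ell}(T)$. This is where the genuine gap lies. You do not carry the induction out, and the two ``delicate points'' you flag are real obstacles rather than bookkeeping: deleting the leaf $\ell_0$ changes its neighbour's diagonal entry $1+q^2(d_w-1)$, so the recursion for the $U_{m,\ell}$ does not land cleanly on the same quantities for $T-\ell_0$; and in the matched case $T-\ell_0-w$ is in general a forest, not a tree, so your inductive hypothesis as stated does not even apply. More to the point, your assertion that the inequality ``genuinely requires the relations among the tree quantities $U_{m,\ell}(T)$'' is exactly what the $a_i^T$ change of basis circumvents: once one knows that $\sum_{m\ge i}(-1)^{m-i}\binom{m}{i}U_{m,\ell}(T)\ge0$ for every $i\ge1$ (this is the coefficientwise positivity of $a_i^T$), no further tree-specific work is needed, and the stronger affine form with constants $\frac{q^2-1}{k-1}$ and $\frac{k-2}{k-1}$ also drops out from the $i=0$ term $a_0^T(q)=1-q^2$. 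The missing idea is therefore not an induction on $n$ but this positivity/change-of-basis step; without it your route, even if it could be forced through, would amount to reproving that positivity in a much less transparent way.
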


%Theorem \ref{thm:hook_imm_ineqs} implies the following sequence 
%of inequalities for all $q \in \RR$ when $2 \leq k \leq n$:

In this paper, we give a counterpart of inequality 
\eqref{eqn:hook_imm_ineq} to 
immanants indexed by partitions with at most two rows.  
For $k \geq 0$, let $\lambda = \TwoRow_k$ be the partition 
$\lambda = n-k,k$ of $n$ with at most two rows.  Since 
$\lambda$ is a partition, we must have $k \leq \nhalf$ and 
also have $k \geq 0$.  
Let $\odtr_k(\sL_q^T)$ denote the normalized immanant of $\sL_q^T$ indexed
by $\TwoRow_k$.  The main result of this paper is the following.

\begin{theorem}
\label{thm:two_row_imm_ineqs}
Let $T$ be a tree on $n \geq 5$ vertices and let $\sL_q^T$ be its $q$-Laplacian.
Then for $2 \leq k \leq \nhalf$ and for all $q \in \RR$, the normalized 
two-rowed immanants of $\sL_q^T$ satisfy the following:
\begin{equation*}
\odtr_{k-1}(\sL_q^T) 
\leq  \odtr_k(\sL_q^T).
\end{equation*}
\end{theorem}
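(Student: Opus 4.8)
The plan is to reduce the inequality between two-rowed immanants to a statement purely about symmetric group characters and the structure of $\sL_q^T$, exactly as in the hook case of Theorem~\ref{thm:hook_imm_ksomething}. First I would recall the combinatorial expansion of $\od_\lambda(\sL_q^T)$ coming from the fact that $\sL_q^T$ is (up to conjugation by a diagonal matrix) $I$ plus a weighted adjacency-type matrix supported on edges of $T$: expanding the immanant over permutations $\pi\in\SSS_n$, only permutations whose non-fixed points form a disjoint union of ``double edges'' (2-cycles $(u\,v)$ with $uv\in E(T)$) survive, because $T$ has no cycles. This gives $\od_\lambda(\sL_q^T)=\sum_{M}\big(\prod_{e\in M}(\text{weight})\big)\cdot\frac{\chi_\lambda(\pi_M)}{\chi_\lambda(\id)}$ where $M$ ranges over partial matchings of $T$ and $\pi_M$ is the corresponding fixed-point-free-on-$M$ involution of cycle type $2^{|M|}1^{n-2|M|}$. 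Both sides of the desired inequality are therefore weighted sums over the \emph{same} set of matchings $M$, with the same weights, so it suffices to compare, matching by matching, the normalized character ratios $\chi_{\TwoRow_{k-1}}(2^j 1^{n-2j})/\chi_{\TwoRow_{k-1}}(\id)$ against $\chi_{\TwoRow_k}(2^j 1^{n-2j})/\chi_{\TwoRow_k}(\id)$ — except that the weights can be negative (when $|q|>1$), so a naive termwise comparison is not immediately available and one must be more careful about signs.

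The key technical input, which the abstract flags as the ``main lemma,'' should be an exact identity expressing the difference $\odtr_k(\sL_q^T)-\odtr_{k-1}(\sL_q^T)$, or the relevant character differences, in terms of binomial coefficients and two-row character values; concretely I expect something of the shape $\chi_{n-k,k}(2^j1^{n-2j})=\sum$ (signed binomial terms) obtained from the Jacobi–Trudi / Frobenius formula for two-row shapes, or from the Murnaghan–Nakayama rule applied repeatedly to the $j$ two-cycles. Using that identity, I would rewrite $\odtr_k(\sL_q^T)-\odtr_{k-1}(\sL_q^T)$ as a single sum over matchings with coefficients that are manifestly nonnegative combinations — this is where the ``odd height peaks in generalized Dyck paths'' interpretation enters: the coefficient attached to a matching of size $j$ counts (a signed or unsigned count of) lattice paths of a prescribed type, and positivity of that count is the crux. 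I would then invoke the known hook inequality of Theorem~\ref{thm:hook_imm_ineq} as a base/building block where possible, since two-row and hook characters are related by transpose $\lambda\mapsto\lambda'$ and by branching, possibly letting me bootstrap rather than redo the path enumeration from scratch.

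The main obstacle, I expect, is precisely controlling the signs: unlike the hook case where the relevant character ratios have a clean monotonicity, the two-row character values $\chi_{n-k,k}(2^j1^{n-2j})$ oscillate in sign with $j$, so the difference $\odtr_k-\odtr_{k-1}$ is an alternating sum and one cannot argue termwise. The resolution I would pursue is to group the matchings by size and show that the generating polynomial in $q$ of the size-$j$ contribution has nonnegative coefficients after the grouping dictated by the main lemma — equivalently, that a certain combinatorial statistic (number of odd-height peaks, or special descents in two-row SYT) is nonnegative on average with the right weighting — and this is exactly the probabilistic interpretation promised in the abstract. A secondary, more routine obstacle is the small-$n$ hypothesis $n\ge 5$ and the range $2\le k\le\nhalf$: I would need to check that $\TwoRow_k$ and $\TwoRow_{k-1}$ are genuinely two-row partitions (so $k\le\nhalf$ forces $n-k\ge k$) and handle the boundary cases $k=2$ and $k=\nhalf$ separately, probably by direct computation, since the path-counting argument may degenerate there.
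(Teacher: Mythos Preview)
Your setup is on the right track: expanding the immanant over $\SSS_n$, only involutions whose two-cycles form a matching of $T$ survive, so both sides become sums over matchings $M$ with the same weight attached to each $M$. But your stated ``main obstacle'' is misidentified. The matching weights are in fact nonnegative for every real $q$: a two-cycle on an edge contributes $(-q)(-q)=q^2$ and a fixed vertex $v$ contributes the diagonal entry $1+(d_v-1)q^2\ge 1$. So there is no sign problem with the weights. You are right, however, that the termwise comparison over $j=|M|$ still fails, because the ratios $\chitr_{n,k}(2^j1^{n-2j})/\chitr_{n,k}(\id)$ are not monotone in $k$ for fixed $j$.

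The genuine gap in your proposal is that you never say how to regroup to repair this, and your guesses (an identity for the difference, bootstrapping from hooks via transpose, invoking the path combinatorics to force positivity) are not what works. The paper does not sum over $j$ at all; it uses the vertex-orientation decomposition from \cite{mukesh-siva-hook} to write
\[
\odtr_k(\sL_q^T)=\sum_{i=0}^{\nhalf} a_i^T(q)\,2^i\,\frac{\alpha_{n,k,i}}{\alpha_{n,k,0}},
\qquad 2^i\alpha_{n,k,i}=\sum_{j=0}^i\binom{i}{j}\chitr_{n,k}(j),
\]
where each $a_i^T(q)$ with $i\ge 1$ is a polynomial in $q^2$ with nonnegative coefficients and the $i=0$ term contributes equally to both immanants. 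The whole theorem then reduces to the single ratio inequality
\[
\frac{\alpha_{n,k,i}}{\alpha_{n,k,0}}\ \ge\ \frac{\alpha_{n,k+1,i}}{\alpha_{n,k+1,0}}
\qquad(0\le i\le \nhalf),
\]
which is the actual ``main lemma.'' For $i<\nhalf$ this follows by induction on $n$ from the Murnaghan--Nakayama recurrence $\alpha_{n,k,i}=\alpha_{n-1,k,i}+\alpha_{n-1,k-1,i}$ together with the mediant inequality; the only delicate case is $n=2\ell$, $i=\ell$, handled by identifying $\alpha_{2\ell,k,\ell}$ with successive differences of trinomial coefficients $p_{\ell,k}-p_{\ell,k-1}$ and proving a separate log-concavity-type estimate for those. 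The Riordan/Dyck-path material is an \emph{a posteriori} interpretation of the $\alpha_{n,k,i}$, not the engine of the proof, and the hook inequalities are not used.
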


Underlying the proof of Theorem \ref{thm:hook_imm_ineqs} 
from \cite{mukesh-siva-hook}, is a string of inequalities 
involving binomial coefficients and irreducible character 
values of $\SSS_n$ indexed by hook partitions.  Counterparts
of several relations
that are true for irreducible characters indexed by hook partitions
have been found for irreducible characters indexed by two-row
partitions.  See for example the paper by Zeilberger and Regev 
\cite{zeil-regev-surprising-relations-slc} and Bessenrodt's refinement
\cite{bessenrodt-coincidences-ejc}.
Inspired by such parallels, we get similar inequalities 
involving binomial coefficients and
irreducible character values indexed by two-row partitions in this
work (see Lemma \ref{lem:main_ineq}).  

Somewhat surprisingly, quantities that appear in the proof of Theorem 
\ref{thm:two_row_imm_ineqs} are intimately connected to 
the combinatorics of Generalized Riordan paths.  We outline the 
background and general
strategy of our proof in Section \ref{sec:prelims} and then move on to
the proof of Theorem \ref{thm:two_row_imm_ineqs} in Section
\ref{sec:main_thm}.  Our inequalities admit a probabilistic
interpretation to present which,
a little more background is given in Section 
\ref{sec:polynomials}.  
In Section \ref{sec:prob_interpret}, we give our interpretation 
involving 
peaks at odd heights in generalized Dyck paths or equivalently,
involving special descents in Standard Young Tableaux with
at most two rows.

A poset denoted $\GTS_n$ on the set of trees with $n$ vertices
was defined by Csikvari (see \cite{csikvari-poset1, csikvari-poset2}).
He showed that several tree parameters are monotonic as one goes 
up the $\GTS_n$ poset.   Results involving this poset are usually 
shown for a fixed tree parameter as the tree varies on $\GTS_n$.
Using the above results, we show that one can vary both the 
normalized two row immanant as the size of the first part 
decreases (this is the tree parameter)  {\bf and} the 
tree $T$.  In Corollary 
\ref{cor:gts_poset}, we give comparability results about  
$\odtr_{k-1}(\sL_q^{T_1})$ and $\odtr_{k}(\sL_q^{T_2})$ when
$T_1$ and $T_2$ are comparable trees in $\GTS_n$.

\section{Preliminaries}
\label{sec:prelims}

%We recall a few preliminaries from \cite{mukesh-siva-hook}.
%about computing the normalized immanant.
For a graph $G$ on $n$ vertices, we need the two following 
$n \times n$ matrices. Let $A$ and $D$ denote $G$'s 
adjacency matrix and the diagonal matrix with degrees 
on the diagonal respectively.  Define the $q$-Laplacian
of $G$ to be $\sL_q^G = I + (D-I)q^2 -qA$.  On setting
$q = 1$, we have $\sL_1^G = D- A$ which is the usual Laplacian
$L(G)$ of $G$.  Thus, $\sL_q^G$ is a more general matrix 
than the Laplacian and is termed the $q$-Laplacian of $G$.
The matrix $\sL_q^G$ has connections to the Ihara 
Selberg zeta function of $G$ (see Bass \cite{bass}, Foata and
Zeilberger \cite{foata-zeilberger-bass-trams}).  When the graph $G$ is 
a tree, $\sL_q^T$ is upto a scalar, the inverse of the 
exponential distance matrix $\ED_T$ of $T$ (see Bapat, 
Lal and Pati \cite{bapat-lal-pati}).  Several results about $\sL_q^T$ have
then subsequently been proved, see 
\cite{bapat06-procICDM,bapat-siva-third-immanant}  and the references 
therein.  Thus, $\sL_q^T$ is a well studied object.

Normalized immanants of the $q$-Laplacian $\sL_q^T$ of a
tree $T$ can be 
computed using the dual and alternative notion of vertex 
orientations.  We refer the reader to 
\cite[Lemmas 5, 17 and Theorem 11]{mukesh-siva-hook} for an 
introduction to this and terms undefined here.   We have not 
defined them again in this paper as we do not have anything 
new to say on them.
For $\lambda \vdash n$ and $j \leq \nhalf$, 
denote by $\chi_{\lambda}(j)$, the irreducible character 
$\chi_{\lambda}$ evaluated at a permutation with cycle type
$2^j1^{n-2j}$.   %We denote the character value $\chi_{\lambda}$
%evaluated on the identity permutation $\id \in \SSS_n$ as 
%$\chi_{\lambda}(\id)$.  

For a tree $T$, when $i \geq 1$,
let $\sO_i$ denote the set of vertex orientations 
with  $i$ bidirected arcs.   
We need the following from \cite[Corollary 13]{mukesh-siva-hook}.
There exists a statistic $\law: \sO_i \mapsto \NN$ whose ordinary
generating function $a_i^T(q) = \sum_{O \in \sO_i} q^{\law(O)}$
will be used to compute normalized immanants of $\sL_q^T$.
In this work, we will need the tree $T$ and so we have embedded it
in our notation of $a_i^T(q)$.  This was not needed in
\cite{mukesh-siva-hook} and there the same quantity was denoted
$a_i(q)$ as the tree $T$ was implicit.  With this slight change
in notation, we recall  \cite[Lemma 17]{mukesh-siva-hook} 
as a starting point of this work.  

\begin{lemma}[Nagar and Sivasubramanian]
\label{lem:prelim_part}
Let $T$ be a tree on $n$ vertices with $q$-Laplacian $\sL_q^T$.  For
$\lambda \vdash n$, let $\od_{\lambda}(\sL_q^T)$ denote the 
normalized immanants of $\sL_q^T$ indexed by $\lambda$.  Then, we have
\begin{eqnarray}
\od_{\lambda}(\sL_q^T) %& = & \frac{1}{\chi_{\lambda}(\id)}
%\sum_{j=0}^{\nhalf} \chi_{\lambda}(j) m_j(q), \\
 & = & \frac{1}{\chi_{\lambda}(\id)} \sum_{i=0}^{\nhalf} 
a_i^T(q) \Bigg( 
\sum_{j=0}^{i} \binom{i}{j} \chi_{\lambda}(j) \Bigg), \\
& = & \frac{1}{\chi_{\lambda}(\id)} \sum_{i=0}^{\nhalf} a_i^T(q) 2^i \alpha_{n,\lambda,i}, \label{eqn:earlier_imp}
\end{eqnarray}
where we define
\begin{equation}
\label{eqn:defn_alpha}
2^i\alpha_{n,\lambda,i} = \sum_{j=0}^{i} \binom{i}{j} \chi_{\lambda}(j).
\end{equation}
\end{lemma}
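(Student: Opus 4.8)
The plan is to start from the definition of the normalized immanant in \eqref{eqn:norm_immanant_defn}, rewrite the sum over $\SSS_n$ as a sum over cycle types, and then reorganize so that vertex orientations of $T$ appear. Since we are told in the excerpt that normalized immanants of $\sL_q^T$ can be computed using vertex orientations (citing \cite[Lemmas 5, 17 and Theorem 11]{mukesh-siva-hook}), the serious combinatorial content — the passage from permutations weighting entries of $\sL_q^T$ to the orientation model and the statistic $\law$ — is exactly what is being imported; so the "proof" here is essentially a bookkeeping reorganization of a statement established in that reference. Concretely, I would first recall from \cite[Theorem 11 and Corollary 13]{mukesh-siva-hook} that $\sum_{\pi \in \SSS_n} \chi_\lambda(\pi) \prod_i (\sL_q^T)_{i,\pi(i)}$ can be written as a sum over vertex orientations $O$ of $T$, where an orientation with $i$ bidirected arcs contributes a factor involving $q^{\law(O)}$ and a character-theoretic weight depending only on $i$ and $\lambda$; the key point is that the only feature of an orientation that the character weight sees is its number $i$ of bidirected arcs.

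Next I would assemble that character weight. For an orientation with $i$ bidirected arcs, the relevant permutations that arise are products of $j$ transpositions on the bidirected arcs (for $0 \le j \le i$), each such permutation having cycle type $2^j 1^{n-2j}$, and there are $\binom{i}{j}$ ways to choose which $j$ of the $i$ bidirected arcs get "activated." This is precisely where the inner sum $\sum_{j=0}^i \binom{i}{j}\chi_\lambda(j)$ comes from. Grouping orientations by $i$ and recalling the definition $a_i^T(q) = \sum_{O \in \sO_i} q^{\law(O)}$ from \cite[Corollary 13]{mukesh-siva-hook}, the full sum collapses to $\sum_{i=0}^{\nhalf} a_i^T(q)\big(\sum_{j=0}^i \binom{i}{j}\chi_\lambda(j)\big)$; dividing by $\chi_\lambda(\id)$ gives the first displayed equality. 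The second equality is then a pure definitional rewrite: introduce $2^i \alpha_{n,\lambda,i} := \sum_{j=0}^i \binom{i}{j}\chi_\lambda(j)$ as in \eqref{eqn:defn_alpha} and substitute.

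The main obstacle is not a mathematical difficulty but a matter of faithfully extracting and citing the right components from \cite{mukesh-siva-hook} — specifically, verifying that the orientation-to-character correspondence there indeed packages all the $q$-dependence into $a_i^T(q)$ and all the $\lambda$-dependence into the binomial–character sum, with the bidirected-arc count $i$ being the unique coupling parameter. Since the excerpt explicitly says this lemma is \cite[Lemma 17]{mukesh-siva-hook} recalled with only a notational change ($a_i(q) \to a_i^T(q)$), I would keep the argument short, cite \cite[Lemmas 5, 17, Corollary 13 and Theorem 11]{mukesh-siva-hook} for the orientation framework, and present the two equalities as the stated reformulation rather than re-deriving the orientation machinery from scratch.
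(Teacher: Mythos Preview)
Your proposal is correct and matches the paper's treatment: the paper does not prove this lemma at all but simply recalls it verbatim from \cite[Lemma 17]{mukesh-siva-hook} with the sole notational change $a_i(q) \to a_i^T(q)$, and your plan to cite the orientation framework from \cite{mukesh-siva-hook} rather than re-derive it is exactly that. Your sketch of where the inner binomial--character sum comes from (choosing $j$ of the $i$ bidirected arcs to activate) is accurate and would serve well as an optional gloss, but the paper itself offers no such explanation.
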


%As done by Chan and Lam 
%in \cite{hook_immanant_explained-chan_lam} and extended by Nagar
%and Sivasubramanian in \cite{mukesh-siva-hook}, 

\subsection{Two rowed immanants}

Fixing the number $n$ of vertices of $T$, we specialise Lemma 
\ref{lem:prelim_part} to the case when
%$\lambda$ is a partition with at most two rows.  Recall that
%the number of vertices of $T$ is $n$ 
$\lambda = \TwoRow_k$ and denote the irreducible 
character indexed by $\TwoRow_k$ as 
$\chitr_{n,k}(\cdot )$.  
We denote the normalized immanant of $\sL_q^T$ corresponding to the
partition $\TwoRow_k$ 
%character $\chitr_{n,k}$ 
as $\odtr_k(\sL_q^T)$.  When $\lambda = \TwoRow_k$,
we also denote the term $\alpha_{n,\lambda,i}$ as 
$\alpha_{n,k,i}$ to avoid any confusion.  With this notation, 
substituting $\lambda = \TwoRow_k$ in Lemma \ref{lem:prelim_part}, 
gives us:
\begin{equation}
  \label{eqn:main}
  \odtr_k(\sL_q^T) = \frac{1}{\chitr_{n,k}(\id)} \sum_{i=0}^{\nhalf} a_i^T(q) 2^i\alpha_{n,k,i}.
\end{equation}

%\red{We have added the superscript $T$ to the $a_i(q)$'s for use
%later.}
Equation \eqref{eqn:earlier_imp} and hence equation
\eqref{eqn:main} shows that one can split the computation of 
the normalized immanant of $\sL_q^T$ into two parts: the first 
being $a_i^T(q)$ which only depends on the tree $T$ and does not
depend on the partition $\lambda$ (or $\TwoRow_k$).  
The second part is $2^i \alpha_{n,\lambda,i}$ 
(or $2^i \alpha_{n,k,i}$) which by Lemma 
\ref{lem:prelim_part}, depends on the character values of
$\lambda$ (or $\TwoRow_k$) and does not depend on the tree $T$.

Chan and Lam in \cite{chan-lam-binom-coeffs-char} 
showed that $\alpha_{n,\lambda,i} \geq 0$ for all $n,i$ and 
$\lambda \vdash n$.  %When $\lambda = \TwoRow_k$, we denote
%$\alpha_{n,\lambda,i}$ as $\alpha_{n,k,i}$.
As mentioned above, when $i \geq 1$, the $a_i^T(q)$'s are 
ordinary generating functions of the statistic $\law$ and hence
have non negative integral coefficients.  Further, by 
\cite[Lemma 16]{mukesh-siva-hook} when $i \geq 1$, 
$a_i^T(q)$ is actually a polynomial in $q^2$ with 
non negative coefficients 
while $a_0(q) = 1-q^2$ for all trees
(see \cite[Corollary 13]{mukesh-siva-hook}).  
%Lastly, though in \eqref{eqn:main}, the summation index
%$i$ goes  from $0$ to $\nhalf$, one only needs $m$ terms where
%$m$ is the size of the largest matching in $T$.
We wish to compare $\odtr_k(\sL_q^T)$ with $\odtr_{k+1}(\sL_q^T)$ 
for the same tree $T$.  Thus, we will analyse $\alpha_{n,k,i}$
in more detail. 

%From Chan and Lam \cite{chan-lam-binom-coeffs-char}, recall that 
%\blue{Give the result for arbitrary partitions}
%$\displaystyle 2^i \alpha_{n,k,i} = \sum_{j=0}^i \chitr_k(j) \binom{i}{j}$,
%where $\chitr_k(j)$ is the character $\chitr_k( )$ evaluated at a permutation
%with cycle type $2^j1^{n-2j}$.

%\begin{remark}
%\label{rem:odd-even}
%Since $0 \leq i \leq \nhalf$, we get 
%\end{remark}

\section{Proof of Theorem \ref{thm:two_row_imm_ineqs}}
\label{sec:proof_main_theorem}
Our first lemma gives a recurrence between the $\alpha_{n,k,i}$'s.  
We adopt the convention that
$\alpha_{n,k,i} = 0$ if either $i > \nhalf$ or if $k > \nhalf$.

\begin{lemma}
\label{lem:murn-naka-based-recur}
For positive integers $n \geq 3$, when $i \leq  \nmhalf$, we have 
$\alpha_{n,k,i} = \alpha_{n-1,k,i} + \alpha_{n-1,k-1,i}$.
\end{lemma}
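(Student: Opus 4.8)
The plan is to prove the identity by a two-step reduction. First I would unwind the definition \eqref{eqn:defn_alpha}: since $2^i \alpha_{n,k,i} = \sum_{j=0}^i \binom{i}{j} \chitr_{n,k}(j)$, it suffices to prove the corresponding identity for the character values, namely
\begin{equation*}
\chitr_{n,k}(j) = \chitr_{n-1,k}(j) + \chitr_{n-1,k-1}(j)
\end{equation*}
for every $j \leq \nmhalf$ (and with the same boundary conventions), because then summing $\binom{i}{j}$ times this identity over $0 \le j \le i$ and dividing by $2^i$ yields the claim. The range condition $i \le \nmhalf$ is exactly what guarantees that every $j$ appearing in the sum satisfies $j \le \nmhalf$, so that $2^j 1^{(n-1)-2j}$ is a genuine cycle type in $\SSS_{n-1}$ and the two smaller character values are defined.

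The second step is to establish the character identity itself. The natural tool is the Murnaghan–Nakayama rule (the paper even names the lemma after it). A permutation of cycle type $2^j 1^{n-2j}$ in $\SSS_n$ can be viewed as a permutation of cycle type $2^j 1^{n-1-2j}$ in $\SSS_{n-1}$ with one extra fixed point appended. Removing a border strip of size $1$ (a single box) from the two-row shape $(n-k, k)$: we may remove the last box of the first row, obtaining $(n-1-k, k) = \TwoRow_k$ of $n-1$, or the last box of the second row, obtaining $(n-k, k-1) = \TwoRow_{k-1}$ of $n-1$; both single-box removals carry sign $+1$. Hence the Murnaghan–Nakayama rule applied to the "$1$-cycle" (the appended fixed point) gives exactly $\chitr_{n,k}(j) = \chitr_{n-1,k}(j) + \chitr_{n-1,k-1}(j)$, with the degenerate cases (where one of the two shapes is not a partition, i.e.\ $k > \lfloor (n-1)/2 \rfloor$ forcing the second row to exceed the first) handled by the convention $\alpha_{n',k',i} = 0$, which matches the Murnaghan–Nakayama convention that invalid shapes contribute $0$.

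The main obstacle — really the only delicate point — is the bookkeeping at the boundary: one must check that whenever $\alpha_{n-1,k,i}$ or $\alpha_{n-1,k-1,i}$ is declared $0$ by the stated convention, the Murnaghan–Nakayama rule genuinely produces no corresponding term, and conversely that no term is lost. For instance when $n$ is even and $k = n/2$, the shape $(n-k,k) = (n/2, n/2)$ removes a box from the first row to give $(n/2 - 1, n/2)$, which is not a partition and contributes nothing, consistent with $\alpha_{n-1, n/2, i} = 0$ since $n/2 > \lfloor (n-1)/2 \rfloor$; meanwhile removing from the second row gives $(n/2, n/2 - 1) = \TwoRow_{n/2-1}$ of $n-1$, which survives. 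One should also record that the hypothesis $n \ge 3$ ensures $\TwoRow_k$ is a legitimate (nonempty, at most two-row) partition in the nondegenerate range. After these checks the identity for $\alpha$ follows immediately by the linear combination argument of the first paragraph, completing the proof.
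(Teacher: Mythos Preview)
Your proposal is correct and follows essentially the same route as the paper: both reduce the identity to the character relation $\chitr_{n,k}(j) = \chitr_{n-1,k}(j) + \chitr_{n-1,k-1}(j)$ via the Murnaghan--Nakayama rule (stripping a single box corresponding to a fixed point), then sum against $\binom{i}{j}$ and divide by $2^i$. Your write-up is in fact more careful about the boundary bookkeeping than the paper's own proof, which simply cites Murnaghan--Nakayama and performs the linear substitution without further comment.
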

\begin{proof}
When $n=3$, the relation is easy to check.  Thus, let $n \geq 4$.  
By definition, we have
$\displaystyle 2^i \alpha_{n,k,i} = \sum_{j=0}^i \chitr_{n,k}(j) \binom{i}{j}$.
By the Murnaghan-Nakayama lemma (see Sagan's book 
\cite[Theorem 4.10.2]{sagan-book}), we have
$\chitr_{n,k}(j) = \chitr_{n-1,k}(j) + \chitr_{n-1,k-1}(j)$.  Thus, we get 
\begin{eqnarray*}
\alpha_{n,k,i} 
& = & \frac{1}{2^i} \sum_{j=0}^i \chitr_{n,k}(j)  \binom{i}{j}   \\
& = & \frac{1}{2^i} \sum_{j=0}^i \Big[ \chitr_{n-1,k}(j) + \chitr_{n-1,k-1}(j) \Big] \binom{i}{j}    \\
& = & \alpha_{n-1,k,i} + \alpha_{n-1,k-1,i}.
\end{eqnarray*}
%When
%$n=2$, there are two normalized immanants: the determinant, corresponding
%to $\lambda = 1,1$ and the permanent corresponding to $\lambda = 2$ and
%only one tree, $P_2$ the path on two vertices.
%Theorem \ref{thm:hook_imm_ineqs} shows that the permanent 
%is bigger than the determinant of $\sL_q^T$ for the tree $T=P_2$
%}.
The proof is complete.
\end{proof}

\begin{example}
\label{eg:recursive-from-prev}
We illustrate Lemma \ref{lem:murn-naka-based-recur} when
$n=6,7,8$  below, where we show the 
tables containing $\alpha_{n,k,i}$'s.

\vspace{3 mm}
\noindent
$\begin{array}{l  r}
 \begin{array}{r|c|c|c|c|} 
  & \lambda = 6 & \lambda = 5,1 & \lambda = 4,2 & \lambda = 3,3 \\ \hline
  i=0 & \red{1} & \red{5} & \red{9} & \red{5} \\ \hline
  i=1 & 1 & \fya{4} & \fya{6} & 3 \\ \hline
  i=2 & 1 & 3 & 4 & 2 \\ \hline
  i=3 & 1 & 2 & 3 & 1 \\ \hline
  \end{array}
& 
\begin{array}{r|c|c|c|c|} 
  & \lambda = 7 & \lambda = 6,1 & \lambda = 5,2 & \lambda = 4,3 \\ \hline
  i=0 & \red{1} & \fby{\red{6}} & \fby{\red{14}} & \red{14} \\ \hline
  i=1 & 1 & 5 & \fya{10} & 9 \\ \hline
  i=2 & 1 & 4 & \fyc{7} & \fyc{6} \\ \hline
  i=3 & 1 & 3 & 5 & 4 \\ \hline
  \end{array}
\end{array}$

\vspace{3 mm}

$ \begin{array}{r|c|c|c|c|c|c|} 
  & \lambda = 8 & \lambda = 7,1 & \lambda = 6,2 & \lambda = 5,3 & \lambda=4,4\\ \hline
  i=0 & \red{1} & \red{7} & \fby{\red{20}} & \red{28} & \red{14} \\ \hline
  i=1 & 1 & 6 & 15 & 19 & 9\\ \hline
  i=2 & 1 & 5 & 11 & \fyc{13} & 6\\ \hline
  i=3 & 1 & 4 & 8 & 9 & 4 \\ \hline
  i=4 & 1 & 3 & 6 & 6 & 3 \\ \hline
\end{array}$

\vspace{3 mm}

We have coloured the cells to illustrate Lemma
\ref{lem:murn-naka-based-recur}.  The coloured cell 
in the table when $n=8$ is the sum of the two identically
coloured cells in the table when $n=7$ and similarly each 
cell can be computed recursively.
\end{example}

Since $0 \leq i \leq \nhalf$, we get one extra row (that is, one 
extra $i$) for each even $n$ as $n$ increases.  When $n=2 \ell$
is even, we denote the row corresponding to $i = \ell$ as
the {\it last row.}  As can be seen from Example 
\ref{eg:recursive-from-prev}, when $n=2\ell$, the 
{\it last row} cannot be obtained as a sum of rows 
when $n=2\ell-1$.

\begin{remark}
\label{rem:dim_i-zero-row}
In \eqref{eqn:defn_alpha}, with $\lambda = \TwoRow_k$, 
the dimension of the irreducible 
representation indexed by $\TwoRow_k$ equals $\alpha_{n,k,0}$.  
That is, $\alpha_{n,k,0} = \chitr_{n,k}(\id)$.   Further, 
by the Hook-length formula (see Sagan \cite{sagan-book})  the 
dimension of the irreducible representation indexed 
by $\TwoRow_k$  is $\alpha_{n,k,0} = \binom{n}{k} - \binom{n}{k-1}$.
\end{remark}

To prove Theorem \ref{thm:two_row_imm_ineqs}, we write 
both $\odtr_k(\sL_q^T)$ and $\odtr_{k+1}(\sL_q^T)$ 
using 
\eqref{eqn:main}.  As the terms $2^ia_i^T(q)$ are common
within the summation, using Remark \ref{rem:dim_i-zero-row}, 
as a first attempt, we want to show that
\begin{equation}
\label{eqn:to_show}
\frac{\alpha_{n,k,i}}{\alpha_{n,k,0}} 
\geq 
\frac{\alpha_{n,k+1,i}}{\alpha_{n,k+1,0}}.
\end{equation}

If this holds, then we can show an inequality for
each term of the summation in \eqref{eqn:main}. 
We check with our data for $n=6$ and tabulate the ratio 
$\displaystyle \frac{\alpha_{6,k,i} }{\alpha_{6,k,0}}$.  Remark
\ref{rem:dim_i-zero-row} tells us that $\chi_k(\id)$ is given 
by the first row (shown in red colour, seen better on a colour 
monitor).  Dividing, we get the following table of ratios.

\vspace{3 mm}

$ \displaystyle \begin{array}{r|c|c|c|c|} 
  & \lambda = 6 & \lambda = 5,1 & \lambda = 4,2 & \lambda = 3,3 \\ \hline
  i=0 & \red{1} & \red{5/5} & \red{9/9} & \red{5} \\ \hline
  i=1 & 1 & 4/5 & 6/9 & 3/5 \\ \hline
  i=2 & 1 & 3/5 & 4/9 & 2/5 \\ \hline
  i=3 & 1 & 2/5 & 3/9 & 1/5 \\ \hline
  \end{array}$
\vspace{1 mm}

As the data seems to agree with \eqref{eqn:to_show}, we 
will prove this first.  We need 
the following lemma whose proof is easy and hence omitted.
\begin{lemma}
\label{lem:ineq}
For positive real numbers $a,b,c,d$, 
$$\min\left( \frac{a}{c}, \frac{b}{d} \right) \leq \frac{a+b}{c+d} 
\leq \max\left( \frac{a}{c}, \frac{b}{d} \right).$$
\end{lemma}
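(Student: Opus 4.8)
\textbf{Proof proposal for Lemma \ref{lem:ineq}.}

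The plan is to reduce the two-sided claim to a single elementary cross-multiplication. First I would observe that the statement is symmetric under swapping the pair $(a,c)$ with the pair $(b,d)$, so without loss of generality I may assume $\frac{a}{c} \leq \frac{b}{d}$, which (since $c,d>0$) is equivalent to the inequality $ad \leq bc$. Under this assumption it suffices to prove the two bounds $\frac{a}{c} \leq \frac{a+b}{c+d}$ and $\frac{a+b}{c+d} \leq \frac{b}{d}$, as the general $\min/\max$ statement follows immediately by relabeling.

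For the lower bound, since $c>0$ and $c+d>0$, the inequality $\frac{a}{c} \leq \frac{a+b}{c+d}$ is equivalent to $a(c+d) \leq c(a+b)$, i.e. to $ad \leq cb$, which is exactly our standing assumption. For the upper bound, since $d>0$ and $c+d>0$, the inequality $\frac{a+b}{c+d} \leq \frac{b}{d}$ is equivalent to $d(a+b) \leq b(c+d)$, i.e. again to $ad \leq bc$. Chaining the two bounds gives $\frac{a}{c} \leq \frac{a+b}{c+d} \leq \frac{b}{d}$, and dropping the normalizing assumption yields $\min\!\left(\frac{a}{c},\frac{b}{d}\right) \leq \frac{a+b}{c+d} \leq \max\!\left(\frac{a}{c},\frac{b}{d}\right)$ as claimed.

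There is essentially no obstacle here: the only point requiring a tiny bit of care is that clearing denominators preserves the direction of the inequalities, which is guaranteed by positivity of $c$, $d$, and $c+d$. (One could alternatively present it by writing $\frac{a+b}{c+d}$ as the weighted average $\frac{c}{c+d}\cdot\frac{a}{c} + \frac{d}{c+d}\cdot\frac{b}{d}$ of $\frac{a}{c}$ and $\frac{b}{d}$ with positive weights summing to $1$, from which the bound between the minimum and maximum is immediate; I would likely just mention this convex-combination viewpoint as the conceptual reason.)
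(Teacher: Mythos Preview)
Your proof is correct; the paper itself omits the proof of this lemma as ``easy,'' so there is no approach to compare against, and your cross-multiplication (equivalently, convex-combination) argument is exactly the standard elementary verification one would expect.
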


Our next lemma shows that \eqref{eqn:to_show} holds except for
the last row.  %The only exception is when $n=2 \ell$ and when 
%$i = \ell$ (that is except the {\it last row}, we are done).

\begin{lemma}
\label{lem:inequality}
For positive integers $n \geq 2$ and integers $k \leq \nhalf$,
$i < \nhalf$, we have
\begin{equation}
\label{eqn:ratio-proved}
\frac{\alpha_{n,k,i}}{\alpha_{n,k,0}} \geq 
\frac{\alpha_{n,k+1,i} }{ \alpha_{n,k+1,0}  }
\end{equation}

\end{lemma}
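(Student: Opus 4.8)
The plan is to induct on $n$, using the recurrence $\alpha_{n,k,i} = \alpha_{n-1,k,i} + \alpha_{n-1,k-1,i}$ from Lemma \ref{lem:murn-naka-based-recur} together with the mediant bound of Lemma \ref{lem:ineq}. The base cases (small $n$, say $n \leq 4$) are handled by direct inspection using the tables in Example \ref{eg:recursive-from-prev} and the Hook-length formula for $\alpha_{n,k,0}$ in Remark \ref{rem:dim_i-zero-row}. For the inductive step, fix $n$ and suppose \eqref{eqn:ratio-proved} holds for $n-1$. Given $k \leq \nhalf$ and $i < \nhalf$, the hypothesis that $i < \nhalf$ is exactly what lets us apply Lemma \ref{lem:murn-naka-based-recur} (which requires $i \leq \nmhalf$), so we may write both numerator and denominator on each side of \eqref{eqn:ratio-proved} as sums of two $\alpha$'s at level $n-1$.

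The heart of the argument is then the following comparison. We want
\[
\frac{\alpha_{n-1,k,i} + \alpha_{n-1,k-1,i}}{\alpha_{n-1,k,0} + \alpha_{n-1,k-1,0}}
\;\geq\;
\frac{\alpha_{n-1,k+1,i} + \alpha_{n-1,k,i}}{\alpha_{n-1,k+1,0} + \alpha_{n-1,k,0}}.
\]
By the induction hypothesis, $\alpha_{n-1,j,i}/\alpha_{n-1,j,0}$ is (weakly) decreasing in $j$ over the relevant range, so on the left the two ratios being blended, $\alpha_{n-1,k-1,i}/\alpha_{n-1,k-1,0}$ and $\alpha_{n-1,k,i}/\alpha_{n-1,k,0}$, are both at least the corresponding ratios $\alpha_{n-1,k,i}/\alpha_{n-1,k,0}$ and $\alpha_{n-1,k+1,i}/\alpha_{n-1,k+1,0}$ being blended on the right. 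The intended conclusion is that a mediant (weighted average) of two larger numbers dominates a mediant of two smaller numbers; Lemma \ref{lem:ineq} gives that each mediant lies between its two constituent ratios, and one chains $\text{LHS} \geq \min(\ldots) \geq \ldots$ — but the weights differ between the two sides, so a little care is needed to make the chain go through cleanly. One clean way: show $\text{LHS} \geq \alpha_{n-1,k,i}/\alpha_{n-1,k,0}$ (the left mediant is at least its smaller constituent, which is the shared "middle" ratio) and $\alpha_{n-1,k,i}/\alpha_{n-1,k,0} \geq \text{RHS}$ (the right mediant is at most its larger constituent, again the shared middle ratio), using Lemma \ref{lem:ineq} twice and the monotonicity supplied by induction.

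The main obstacle is purely bookkeeping at the boundary of the $k$-range: when $k = \nhalf$ or $k-1$ or $k+1$ strays outside $[0,\nhalf]$, some of the level-$(n-1)$ terms vanish by the stated convention ($\alpha = 0$ when $k > \lfloor \cdot /2\rfloor$), and then "positivity" of denominators in Lemma \ref{lem:ineq} can fail, so those cases must be peeled off and checked separately — typically they reduce to a single non-vanishing term on one side, making \eqref{eqn:ratio-proved} either trivial or again a two-term mediant comparison one level down. The restriction $i < \nhalf$ is essential and is used precisely to keep Lemma \ref{lem:murn-naka-based-recur} applicable; the "last row" $i = \nhalf$ genuinely behaves differently (as the discussion after Example \ref{eg:recursive-from-prev} notes) and is excluded here.
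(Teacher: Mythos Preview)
Your proposal is correct and follows essentially the same route as the paper: induct on $n$, expand both sides via Lemma~\ref{lem:murn-naka-based-recur}, and then use Lemma~\ref{lem:ineq} twice to sandwich both mediants by the shared ``middle'' ratio $\alpha_{n-1,k,i}/\alpha_{n-1,k,0}$, with the induction hypothesis supplying the needed monotonicity in $k$. Your remark about boundary bookkeeping (degenerate $k$-values where some $\alpha$'s vanish) is a fair caveat that the paper glosses over, but otherwise the arguments coincide.
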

\begin{proof}
We use induction on $n$.  The base case when $n=2$ can be 
easily checked.  For $n+1$ with $k \leq \nhalf$ and $i < \nhalf$, 
we will show that 
\begin{equation*}
\frac{\alpha_{n+1,k,i}}{\alpha_{n+1,k,0} } 
 \geq 
\frac{\alpha_{n+1,k+1,i} }{ \alpha_{n+1,k+1,0} }.
\end{equation*}
By Lemma \ref{lem:murn-naka-based-recur}, we need to show 
\begin{equation*}
\frac{\alpha_{n,k,i} + \alpha_{n,k-1,i} }{\alpha_{n,k,0} + \alpha_{n,k-1,0} } 
\geq 
\frac{\alpha_{n,k+1,i} + \alpha_{n,k,i} }{ \alpha_{n,k+1,0} + \alpha_{n,k,0} }.
\end{equation*}
As $\alpha_{n,k,i} > 0$, by %Lemma \ref{lem:murn-naka-based-recur} and 
Lemma \ref{lem:ineq}, we have
\begin{equation}
  \label{eqn:min}
\frac{\alpha_{n,k,i} + \alpha_{n,k-1,i} }{ \alpha_{n,k,0} + \alpha_{n,k-1,0}} 
\geq \min\left( \frac{ \alpha_{n,k,i} }{ \alpha_{n,k,0} }, 
\frac{ \alpha_{n,k-1,i} }{ \alpha_{n,k-1,0}  } \right) = 
\frac{\alpha_{n,k,i} }{ \alpha_{n,k,0 } }.
\end{equation}
By induction, as $\displaystyle \frac{\alpha_{n,k,i}}{\alpha_{n,k,0}} \geq 
\frac{\alpha_{n,k+1,i}}{\alpha_{n,k+1,0}}$, we also get
\begin{equation}
  \label{eqn:max}
  \frac{ \alpha_{n,k,i} }{ \alpha_{n,k,0} } =
  \max\left( \frac{ \alpha_{n,k+1,i} }{ \alpha_{n,k+1,0} }, 
  \frac{ \alpha_{n,k,i} }{ \alpha_{n,k,0} } \right) \geq 
\frac{\alpha_{n,k+1,i} + \alpha_{n,k,i} }{\alpha_{n,k+1,0} + \alpha_{n,k,0} }.
\end{equation}
Equations \eqref{eqn:min} and \eqref{eqn:max} imply that
\begin{equation}
\frac{\alpha_{n+1,k,i}}{\alpha_{n+1,k,0} } =
\frac{ \alpha_{n,k-1,i} + \alpha_{n,k,i} }
{ \alpha_{n,k-1,0} + \alpha_{n,k,0} } 
\geq 
\frac{\alpha_{n,k,i} }{ \alpha_{n,k,0 } }
\geq 
\frac{\alpha_{n,k,i} + \alpha_{n,k+1,i} }{\alpha_{n,k,0} + \alpha_{n,k+1,0}}  
= \frac{\alpha_{n+1,k+1,i} }{ \alpha_{n+1,k+1,0} }.
\end{equation}
The proof is complete.
\end{proof}

Lemma \ref{lem:inequality} shows that when $i < \nhalf$, we get 
inequality \eqref{eqn:to_show}.  When $n=2 \ell$, Lemma 
\ref{lem:inequality} does not work for the {\it last row}, that
is when $i = \ell$.  
Our next task is to extend Lemma \ref{lem:inequality} 
to this case.

%\blue{The following lemma shows that
%elements of the last row are related to the 
%trinomial coefficients.}

%\noindent\makebox[\linewidth]{\rule{\textwidth}{2pt}}

%\begin{lemma}
%\label{lem:connect-trinomial}
%For a positive integer $i$, let $n=2i$.  Let $p_{i,k}$ denote the 
%coefficient of $x^k$ in $(1+x+x^2)^i$.  Then, for $0 \leq k \leq i$,
%we have $\alpha_{2i,k,i} = p_{i,k}-p_{i,k-1}$
%\end{lemma}
%\begin{proof}
%\blue{By the Murnaghan Nakayama lemma, we have 
%$\chi_{2i-2,i-1}(j) + $}  This means, we have 
%\blue{\bf To be written.}
%\end{proof}

%\section{Proof of Theorem \ref{thm:two_row_imm_ineqs}}
%\label{sec:main_thm}

\subsection{When $n=2\ell$ and $i=\ell$}
\label{sec:main_thm}

In this subsection, we will focus on extending 
Lemma \ref{lem:inequality} to the case when 
$n= 2\ell$ and when $i = \ell$.  Our first lemma
shows that the $\alpha_{2\ell,k,\ell}$'s are differences 
of succesive coefficients of the trinomial coefficients. 
Let  $p_{\ell,k}$ denote the coefficient of $x^k$ in 
$(1+x+x^2)^{\ell}$ and to save one subscript, let 
$\last_{\ell,k} =\alpha_{2\ell,k,\ell}$.
Thus, $\last_{\ell,k}=0$ when $k>\ell$ and when $k < 0.$

 \begin{lemma}
\label{lem:ank:rec:1}
Fix positive integers $\ell, k$ with $\ell \geq 2$ and with 
$0 \leq k \leq \ell$.  Then, we have
\begin{enumerate}
%	\item \label{lem:ank:rec:1:1} For positive integers $n \geq 3$, when $i < \nhalf$, we have $\alpha_{n,k,i}=\alpha_{n-1,k,i}+\alpha_{n-1,k-1,i}.$
\item  \label{lem:ank:rec:1:2} 
%$a_{n,k} = a_{n-1,k} + a_{n-1,k-1} + a_{n-1, k-2}$. 
$\last_{\ell,k} = \last_{\ell-1,k} + \last_{\ell-1,k-1} + \last_{\ell-1, k-2}$. 
\item  \label{lem:ank:rec:1:3} 
$p_{\ell,k} = p_{\ell-1,k} + p_{\ell-1,k-1}+p_{\ell-1,k-2}.$
\item  \label{lem:ank:rec:1:4} 
%$a_{n,k} = p_{n,k} - p_{n,k-1}$ is a  positive integer.
$\alpha_{2 \ell, k, \ell} =  \last_{\ell,k} = p_{\ell,k} - p_{\ell,k-1}$ is a  positive integer.
\end{enumerate}
 \end{lemma}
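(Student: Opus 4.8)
The plan is to handle the three parts in the order (2), (3), (1), since part (2) is essentially a character identity that follows from an already-available recurrence, part (3) is a routine polynomial fact, and part (1) — the closed form $\last_{\ell,k} = p_{\ell,k} - p_{\ell,k-1}$ together with positivity — is where the real work lies. First I would prove \eqref{lem:ank:rec:1:2} (listed second in the statement as item \ref{lem:ank:rec:1:2}, which is the recurrence for $\last_{\ell,k}$). Starting from the definition $2^\ell \last_{\ell,k} = 2^\ell \alpha_{2\ell,k,\ell} = \sum_{j=0}^\ell \binom{\ell}{j} \chitr_{2\ell,k}(j)$, the obstacle is that Lemma \ref{lem:murn-naka-based-recur} only relates $\alpha_{n,k,i}$ to $\alpha_{n-1,\cdot,i}$ under the hypothesis $i \le \nmhalf$, which fails exactly for the last row. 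So instead I would apply the Murnaghan–Nakayama recurrence $\chitr_{n,k}(j) = \chitr_{n-1,k}(j) + \chitr_{n-1,k-1}(j)$ twice, peeling off two boxes to go from $2\ell$ down to $2\ell-2 = 2(\ell-1)$; this gives $\chitr_{2\ell,k}(j) = \chitr_{2\ell-2,k}(j) + 2\chitr_{2\ell-2,k-1}(j) + \chitr_{2\ell-2,k-2}(j)$ for the relevant conjugacy classes, and I need to be slightly careful with the $j$ versus $j-1$ bookkeeping — a $2$-cycle is removed when the two peeled boxes form a domino. The cleanest route is: $2^\ell \last_{\ell,k} = \sum_j \binom{\ell}{j}\chitr_{2\ell,k}(j)$, use $\binom{\ell}{j} = \binom{\ell-1}{j} + \binom{\ell-1}{j-1}$ and the single-box MN step, and match terms so that the $\binom{\ell-1}{j-1}$ part (corresponding to the freshly-created $2$-cycle) contributes the $2\chitr_{2\ell-2,k-1}$ middle term. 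After dividing by $2^\ell$ this yields part \ref{lem:ank:rec:1:2}.

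Next I would dispatch \eqref{lem:ank:rec:1:3}: expanding $(1+x+x^2)^\ell = (1+x+x^2)(1+x+x^2)^{\ell-1}$ and reading off the coefficient of $x^k$ immediately gives $p_{\ell,k} = p_{\ell-1,k} + p_{\ell-1,k-1} + p_{\ell-1,k-2}$, with the usual boundary conventions $p_{\ell,k}=0$ for $k<0$ or $k>2\ell$. This is a one-line verification.

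Finally, for \eqref{lem:ank:rec:1:4} I would prove $\last_{\ell,k} = p_{\ell,k} - p_{\ell,k-1}$ by induction on $\ell$. The base case $\ell = 2$ (or even $\ell = 1$, checking $\last_{1,0}=1 = p_{1,0}$, $\last_{1,1}=0=p_{1,1}-p_{1,0}$, treating the degenerate cases honestly) is a direct computation. For the inductive step, set $q_{\ell,k} := p_{\ell,k} - p_{\ell,k-1}$ and show $q_{\ell,k}$ satisfies the \emph{same} three-term recurrence as $\last_{\ell,k}$ from part \ref{lem:ank:rec:1:2}: indeed $q_{\ell,k} = (p_{\ell-1,k} + p_{\ell-1,k-1} + p_{\ell-1,k-2}) - (p_{\ell-1,k-1} + p_{\ell-1,k-2} + p_{\ell-1,k-3}) = q_{\ell-1,k} + q_{\ell-1,k-1} + q_{\ell-1,k-2}$ by part \ref{lem:ank:rec:1:3}. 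Since $\last$ and $q$ agree at $\ell-1$ by the inductive hypothesis and obey the same recurrence with matching boundary values ($\last_{\ell,k}=0$ for $k<0$ or $k>\ell$, and correspondingly $p_{\ell,k}=p_{\ell,k-1}=0$ so $q_{\ell,k}=0$ outside that range — here I should double-check the top boundary $k=\ell$ carefully, since $p_{\ell,\ell}$ is the central trinomial coefficient and $q_{\ell,\ell} = p_{\ell,\ell} - p_{\ell,\ell-1}$, which should match $\last_{\ell,\ell}$; a small separate check at $k=\ell$ via the recurrence closes this), we conclude $\last_{\ell,k} = q_{\ell,k}$ for all $\ell$. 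Positivity of $\last_{\ell,k}$ for $0 \le k \le \ell$ then amounts to $p_{\ell,k} > p_{\ell,k-1}$, i.e.\ strict unimodality and strict increase of the trinomial coefficients on the left half of their support; this is classical, but if a self-contained argument is wanted, it follows by a quick induction using part \ref{lem:ank:rec:1:3} — if $p_{\ell-1,\cdot}$ is strictly increasing up to its center then so is the sum of three consecutive shifted copies. Integrality is immediate since $p_{\ell,k}\in\ZZ$. The main obstacle, as flagged, is the careful two-box Murnaghan–Nakayama bookkeeping in part \ref{lem:ank:rec:1:2} — in particular getting the coefficient $2$ on the middle term right and handling the conjugacy-class index shift — everything downstream is then formal.
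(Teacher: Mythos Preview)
Your proposal is correct and follows essentially the same architecture as the paper: part~\ref{lem:ank:rec:1:3} by expanding $(1+x+x^2)^{\ell}=(1+x+x^2)(1+x+x^2)^{\ell-1}$, and part~\ref{lem:ank:rec:1:4} by induction on $\ell$, showing that $p_{\ell,k}-p_{\ell,k-1}$ obeys the same three-term recurrence as $\last_{\ell,k}$ and matching base cases. The only differences are cosmetic. For part~\ref{lem:ank:rec:1:2} the paper simply cites Chan and Lam \cite[Lemma~2.1]{chan-lam-binom-coeffs-char} rather than redo the Murnaghan--Nakayama computation; your plan to split $\binom{\ell}{j}=\binom{\ell-1}{j}+\binom{\ell-1}{j-1}$ and peel either two $1$-cells or one $2$-cell is exactly the content of that citation, and you are right that the $j=\ell$ term (no fixed points, so a domino must be removed) is the one place needing care. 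For positivity, the paper does not invoke unimodality of trinomial coefficients separately: once the recurrence $\last_{\ell,k}=\last_{\ell-1,k}+\last_{\ell-1,k-1}+\last_{\ell-1,k-2}$ is in hand with positive base values at $\ell=2,3$, positivity for $0\le k\le\ell$ follows immediately by induction, which is slightly more economical than your route.
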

 \begin{proof}
 	\begin{enumerate}
% 		\item When $j\leq i < \nhalf$, from Murnaghan Nakayama lemma it is simple to check that $\chi_{n,k}(j)=\chi_{n-1,k}(j)+\chi_{n-1,k-1}(j).$ Therefore 
% 		$$\alpha_{n,k,i}  = \frac{1}{2^i} \sum_{j=0}^{i}[\chi_{n-1,k}(j)+\chi_{n-1,k-1}(j)]\binom{i}{j}=\alpha_{n-1,k,i}+\alpha_{n-1,k-1,i}.$$
\item This follows from the Murnaghan Nakayama lemma, see
\cite[Lemma 2.1]{chan-lam-binom-coeffs-char}.
\item  By the definition of $p_{\ell,k}$, we have 
\begin{align*}
p_{\ell,k} &= \mbox{ Coeff. of } x^k \mbox{ in } (1+x+x^2)^{\ell}\\
&= \mbox{ Coeff. of } x^k \mbox{ in } (1+x+x^2)^{\ell-1}(1+x+x^2)=p_{\ell-1,k} 
+ p_{\ell-1,k-1}+p_{\ell-1,k-2}.
\end{align*}
\item We induct on $\ell$.  When  $\ell=2,3$, it is easy to see that 
$\last_{\ell,k} = p_{\ell,k} - p_{\ell,k-1}$ is a  positive 
integer (also see Table \ref{tab:last_l,k}). Let the 
result be true for all positive integers less than $\ell$. From part (1) 
we have  
\begin{align*}
\last_{\ell,k} & = \last_{\ell-1,k} + \last_{\ell-1,k-1} + \last_{\ell-1, k-2} \\
& = p_{\ell-1,k} - p_{\ell-1,k-1}+p_{\ell-1,k-1} - p_{\ell-1,k-2}
+p_{\ell-1,k-2} - p_{\ell-1,k-3} \\
& = p_{\ell-1,k} +p_{\ell-1,k-1}+p_{\ell-1,k-2}- p_{\ell-1,k-1} - 
p_{\ell-1,k-2} - p_{\ell-1,k-3} \\
& = p_{\ell,k}-p_{\ell,k-1}.
\end{align*}
%The above equality follows from part (3).
\end{enumerate}
The proof is complete.
\end{proof}

We need a couple of inequalities which we see in the next few
lemmas.  Our  next lemma is an extension of Lemma \ref{lem:ineq}.

\begin{lemma}
\label{lem:frac:3}
For $1 \le i \le 4$, let $a_i, b_i$ be positive integers with 
$\dfrac{a_1}{b_1} \le \dfrac{a_2}{b_2} \le \dfrac{a_3}{b_3} 
\le \dfrac{a_4}{b_4}$. Then, 
$$\dfrac{a_1 + a_2 + a_3}{b_1 + b_2 + b_3} \le 
\dfrac{ a_2 + a_3+a_4}{ b_2 + b_3+b_4}.$$ 
\end{lemma}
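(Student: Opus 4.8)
The plan is to reduce the four-fraction inequality to two applications of the two-fraction Lemma \ref{lem:ineq}, using a mediant as an intermediate comparison point. First I would observe that the hypotheses give, in particular, $\frac{a_1}{b_1} \le \frac{a_2}{b_2}$ and $\frac{a_2}{b_2} \le \frac{a_3}{b_3}$, so Lemma \ref{lem:ineq} applied to the pairs $(a_1,b_1),(a_2,b_2)$ and then folding in $(a_3,b_3)$ yields
\[
\frac{a_1+a_2+a_3}{b_1+b_2+b_3} \le \max\left( \frac{a_1+a_2}{b_1+b_2}, \frac{a_3}{b_3} \right) \le \frac{a_3}{b_3},
\]
where the last step uses that $\frac{a_1+a_2}{b_1+b_2} \le \frac{a_2}{b_2} \le \frac{a_3}{b_3}$ (again by Lemma \ref{lem:ineq} and transitivity). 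Symmetrically, on the right-hand side I would use the lower-bound half of Lemma \ref{lem:ineq}: since $\frac{a_2}{b_2} \le \frac{a_3}{b_3} \le \frac{a_4}{b_4}$, the mediant $\frac{a_2+a_3+a_4}{b_2+b_3+b_4}$ is at least $\min\left( \frac{a_2}{b_2}, \frac{a_3+a_4}{b_3+b_4} \right)$, and since $\frac{a_3+a_4}{b_3+b_4} \ge \frac{a_3}{b_3} \ge \frac{a_2}{b_2}$, this minimum equals $\frac{a_2}{b_2}$...

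Here I need to be a little careful: chaining through $\frac{a_3}{b_3}$ on the left and $\frac{a_2}{b_2}$ on the right would require $\frac{a_3}{b_3} \le \frac{a_2}{b_2}$, which is the wrong direction. So instead I would route both bounds through a single common value. The cleanest choice is to compare both mediants to $\frac{a_2+a_3}{b_2+b_3}$. For the left mediant, Lemma \ref{lem:ineq} gives $\frac{a_1+(a_2+a_3)}{b_1+(b_2+b_3)} \le \max\left( \frac{a_1}{b_1}, \frac{a_2+a_3}{b_2+b_3} \right)$, and since $\frac{a_1}{b_1} \le \frac{a_2}{b_2} \le \frac{a_2+a_3}{b_2+b_3}$ (the last inequality because $\frac{a_2}{b_2} \le \frac{a_3}{b_3}$ forces $\frac{a_2}{b_2} \le \frac{a_2+a_3}{b_2+b_3}$, by Lemma \ref{lem:ineq}), the maximum is $\frac{a_2+a_3}{b_2+b_3}$. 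For the right mediant, Lemma \ref{lem:ineq} gives $\frac{(a_2+a_3)+a_4}{(b_2+b_3)+b_4} \ge \min\left( \frac{a_2+a_3}{b_2+b_3}, \frac{a_4}{b_4} \right)$, and since $\frac{a_4}{b_4} \ge \frac{a_3}{b_3} \ge \frac{a_2+a_3}{b_2+b_3}$, this minimum is $\frac{a_2+a_3}{b_2+b_3}$. Combining, $\frac{a_1+a_2+a_3}{b_1+b_2+b_3} \le \frac{a_2+a_3}{b_2+b_3} \le \frac{a_2+a_3+a_4}{b_2+b_3+b_4}$, which is exactly the claim.

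The only genuinely substantive point — and the place to be most careful — is verifying the two monotonicity facts $\frac{a_2}{b_2} \le \frac{a_2+a_3}{b_2+b_3}$ and $\frac{a_2+a_3}{b_2+b_3} \le \frac{a_3}{b_3}$, both of which are immediate from the mediant inequality of Lemma \ref{lem:ineq} together with $\frac{a_2}{b_2} \le \frac{a_3}{b_3}$; everything else is a mechanical chain of $\le$'s. I would also note that positivity of all the $a_i, b_i$ is used precisely so that Lemma \ref{lem:ineq} applies at each step (the mediants are well-defined and the inequalities valid), so no degenerate cases arise. The argument extends verbatim to longer chains, but for the paper only the length-four version is needed. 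I expect no real obstacle here; the proof is short once the routing through the common mediant $\frac{a_2+a_3}{b_2+b_3}$ is identified, so I would present it in four or five displayed lines and omit no steps since each is a one-line invocation of Lemma \ref{lem:ineq}.
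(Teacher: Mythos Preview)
Your proposal is correct and, once you settle on routing through the common mediant $\dfrac{a_2+a_3}{b_2+b_3}$, it is essentially identical to the paper's proof: the paper first records $\dfrac{a_2}{b_2} \le \dfrac{a_2+a_3}{b_2+b_3} \le \dfrac{a_3}{b_3}$ via Lemma~\ref{lem:ineq}, then applies Lemma~\ref{lem:ineq} once more on each side to sandwich both three-term mediants around $\dfrac{a_2+a_3}{b_2+b_3}$, exactly as you do. Your initial false start is of course not needed in the write-up, but the final argument matches the paper's line for line.
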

\begin{proof}
We know  
$\dfrac{a_1}{b_1} \le \dfrac{a_2}{b_2} \le 
\dfrac{a_3}{b_3} \le \dfrac{a_4}{b_4}$.  Lemma \ref{lem:ineq}
implies that 
$\dfrac{a_1}{b_1} \le \dfrac{a_2}{b_2} \le 
\dfrac{a_2+a_3}{b_2+b_3} \le \dfrac{a_3}{b_3} \le \dfrac{a_4}{b_4}$. 
Applying Lemma \ref{lem:ineq} again, we get 
$$\dfrac{a_1}{b_1} \le   \dfrac{a_1+ a_2+a_3}{b_1+ b_2+b_3} 
\le \dfrac{a_2 + a_3}{b_2 + b_3} \le 
\dfrac{ a_2 + a_3+a_4}{ b_2 + b_3+b_4} \le  \dfrac{a_4}{b_4}, \mbox{ completing the proof.}$$ 
\end{proof}
 
We next compare the ratio $\last_{\ell,k+1}/\last_{\ell-1,k}$
as $k$ decreases.

\begin{lemma}
\label{lem:ank:rec:2}
Fix positive integers $\ell, k $ with $1 \le k \le \ell-1$ 
and $\ell \geq 3$. Then, 
$$\dfrac{\last_{\ell,k+1}}{\last_{\ell-1,k} } \le \dfrac{\last_{\ell,k}}{\last_{\ell-1,k-1} }.$$ 
\end{lemma}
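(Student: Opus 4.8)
The plan is to prove the inequality
$$\frac{\last_{\ell,k+1}}{\last_{\ell-1,k}} \le \frac{\last_{\ell,k}}{\last_{\ell-1,k-1}}$$
by induction on $\ell$, using the three-term recurrences for $\last_{\ell,k}$ from Lemma \ref{lem:ank:rec:1}\eqref{lem:ank:rec:1:2}. First I would check the base case $\ell = 3$ directly from the small table of $\last_{\ell,k}$ values (the paper references Table \ref{tab:last_l,k}), which reduces to verifying a handful of explicit numerical inequalities. For the inductive step, I would expand both numerators and both denominators via $\last_{\ell,k} = \last_{\ell-1,k} + \last_{\ell-1,k-1} + \last_{\ell-1,k-2}$, so that the desired inequality becomes a comparison between two fractions each of whose numerator and denominator is a sum of three consecutive-looking terms at level $\ell-1$.

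The key is then to set up the hypotheses of Lemma \ref{lem:frac:3}. Cross-multiplying, the target inequality $\frac{A_1+A_2+A_3}{B_1+B_2+B_3} \le \frac{A_2+A_3+A_4}{B_2+B_3+B_4}$ would follow from Lemma \ref{lem:frac:3} provided I can produce a chain of ratios $\frac{A_1}{B_1} \le \frac{A_2}{B_2} \le \frac{A_3}{B_3} \le \frac{A_4}{B_4}$. Here the natural choice is $A_j = \last_{\ell-1,\,k+2-j}$ and $B_j = \last_{\ell-2,\,k+1-j}$ (so that $A_1+A_2+A_3 = \last_{\ell,k+1}$, $B_1+B_2+B_3 = \last_{\ell-1,k}$, $A_2+A_3+A_4 = \last_{\ell,k}$, $B_2+B_3+B_4 = \last_{\ell-1,k-1}$, using the recurrence at levels $\ell$ and $\ell-1$). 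The monotonicity $\frac{A_j}{B_j} \le \frac{A_{j+1}}{B_{j+1}}$, i.e. $\frac{\last_{\ell-1,m+1}}{\last_{\ell-2,m}} \le \frac{\last_{\ell-1,m}}{\last_{\ell-2,m-1}}$ for the relevant indices $m$, is exactly the induction hypothesis applied at $\ell-1$. I would need to be careful about boundary indices where some $\last$ term vanishes (e.g. $k = \ell - 1$, or $k-2 < 0$); in those cases either a term drops out and one applies the two-fraction Lemma \ref{lem:ineq} instead, or one handles the degenerate case separately, so it is cleanest to first dispose of the extreme values of $k$ by hand and run the induction in the generic range.

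I would also double-check that all the $\last$ terms appearing are strictly positive so that Lemmas \ref{lem:ineq} and \ref{lem:frac:3} apply — this follows from Lemma \ref{lem:ank:rec:1}\eqref{lem:ank:rec:1:4}, which says $\last_{\ell,k} = p_{\ell,k} - p_{\ell,k-1}$ is a positive integer in the stated range. The main obstacle I anticipate is bookkeeping with the index ranges: making sure that when I invoke the induction hypothesis at level $\ell-1$ the required inequalities $1 \le m \le \ell-2$ actually hold for every $m$ that arises, and correctly collapsing the $3$-term sums to $2$-term sums at the boundary so that the appropriate lemma (\ref{lem:ineq} vs.\ \ref{lem:frac:3}) is the one being applied. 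Once the index alignment is pinned down, the inequality chain is a mechanical consequence of the two mediant lemmas.
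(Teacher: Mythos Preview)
Your proposal is correct and is essentially the paper's own argument: induction on $\ell$, expand both sides via the three-term recurrence $\last_{\ell,k} = \last_{\ell-1,k} + \last_{\ell-1,k-1} + \last_{\ell-1,k-2}$, obtain a chain of four ratios from the induction hypothesis, and apply Lemma~\ref{lem:frac:3}. The paper phrases the step as going from $\ell$ to $\ell+1$ (so its chain lives at levels $\ell$ and $\ell-1$ rather than your $\ell-1$ and $\ell-2$), and it simply checks $\ell=3,4$ by hand without discussing the boundary-index issues you flag; your added care about degenerate indices (where a $\last$ term vanishes and one falls back on Lemma~\ref{lem:ineq}) is a genuine refinement of the paper's somewhat informal treatment, but the core idea is identical.
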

\begin{proof}
We use induction on $\ell$.  The result is easily verified 
when $\ell = 3,4$ (see Table \ref{tab:last_l,k} in 
Example \ref{example:tab_last} below). 
Let the result be true when $N \le \ell$ and when $ 1 \le k \le \ell-1$.
We then show that it holds for $\ell+1$. Thus, we need to show that 
$\dfrac{\last_{\ell+1,k+1}}{\last_{\ell,k} } \le 
\dfrac{\last_{\ell+1,k}}{\last_{\ell,k-1} }$.  By Lemma~\ref{lem:ank:rec:1}, 
this is equivalent to showing that 
$$\dfrac{\last_{\ell,k+1} + \last_{\ell,k} + \last_{\ell, k-1}}
{\last_{\ell-1,k} + \last_{\ell-1,k-1} + \last_{\ell-1, k-2} }  
\le 
\dfrac{\last_{\ell,k} + \last_{\ell,k-1} + \last_{\ell, k-2}}
{\last_{\ell-1,k-1} + \last_{\ell-1,k-2} + \last_{\ell-1, k-3} }.$$
By the induction hypothesis, we know that 
$$\dfrac{\last_{\ell,k+1}}{\last_{\ell-1,k} } \le 
\dfrac{\last_{\ell,k}}{\last_{\ell-1,k-1} } \le  
\dfrac{\last_{\ell,k-1}}{\last_{\ell-1,k-2} } \le  
\dfrac{\last_{\ell,k-2}}{\last_{\ell-1,k-3} }.$$
The proof is complete by applying Lemma~\ref{lem:frac:3}.
 \end{proof}

By Lemma \ref{lem:ank:rec:1}, 
$\last_{\ell,k}$ is a positive integer. 
Rearranging terms a bit, as an immediate consequence of 
Lemma~\ref{lem:ank:rec:2}, we obtain the following corollary.
\begin{corollary}
\label{cor:ank:rec}
Fix positive integers $\ell, k $ with $ 1 \le k \le \ell-1$ 
and with $\ell\geq 3$. Then, we have
$$\dfrac{\last_{\ell,k+1}}{\last_{\ell,k} } \le \dfrac{\last_{\ell-1,k}}{\last_{\ell-1,k-1} }.$$ 
In general, we have
$$\dfrac{\last_{\ell,k+1}}{\last_{\ell,k}} \le 
\dfrac{\last_{\ell - r,k+1-r}}{\last_{\ell - r, k - r}}, 
\mbox{ when } 1 \le r \le k.$$
\end{corollary}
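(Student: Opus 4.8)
The plan is to obtain both displayed inequalities of Corollary~\ref{cor:ank:rec} directly from Lemma~\ref{lem:ank:rec:2} by elementary manipulation, using only the positivity of the $\last_{\ell,k}$'s guaranteed by Lemma~\ref{lem:ank:rec:1}(\ref{lem:ank:rec:1:4}).

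First I would prove the first inequality. Lemma~\ref{lem:ank:rec:2} gives $\last_{\ell,k+1}/\last_{\ell-1,k} \le \last_{\ell,k}/\last_{\ell-1,k-1}$ for $1 \le k \le \ell-1$, $\ell \ge 3$. Since $1 \le k \le \ell - 1$ forces $0 \le k-1 \le \ell - 2$ and $\ell - 1 \ge 2$, all four quantities $\last_{\ell,k+1}, \last_{\ell-1,k}, \last_{\ell,k}, \last_{\ell-1,k-1}$ are positive integers by Lemma~\ref{lem:ank:rec:1}(\ref{lem:ank:rec:1:4}). Multiplying the inequality of Lemma~\ref{lem:ank:rec:2} through by the positive number $\last_{\ell-1,k}/\last_{\ell,k}$ (equivalently, cross-multiplying to $\last_{\ell,k+1}\,\last_{\ell-1,k-1} \le \last_{\ell,k}\,\last_{\ell-1,k}$ and then dividing by $\last_{\ell,k}\,\last_{\ell-1,k-1}$) yields $\last_{\ell,k+1}/\last_{\ell,k} \le \last_{\ell-1,k}/\last_{\ell-1,k-1}$, which is the first claim.

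Next, for the general inequality I would induct on $r$, the base case $r=1$ being the first claim just established. For the inductive step, assume $\last_{\ell,k+1}/\last_{\ell,k} \le \last_{\ell-r,k+1-r}/\last_{\ell-r,k-r}$ and apply the already-proved first inequality with $(\ell,k)$ replaced by $(\ell-r,k-r)$ to obtain $\last_{\ell-r,k+1-r}/\last_{\ell-r,k-r} \le \last_{\ell-r-1,k-r}/\last_{\ell-r-1,k-r-1}$; chaining the two inequalities gives the statement for $r+1$. Unwound, the argument is simply the telescoping chain
$$\frac{\last_{\ell,k+1}}{\last_{\ell,k}} \le \frac{\last_{\ell-1,k}}{\last_{\ell-1,k-1}} \le \frac{\last_{\ell-2,k-1}}{\last_{\ell-2,k-2}} \le \cdots \le \frac{\last_{\ell-r,k+1-r}}{\last_{\ell-r,k-r}},$$
where the $j$-th link is the first inequality at parameters $(\ell-j,k-j)$ for $j = 0, 1, \ldots, r-1$.

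There is essentially no obstacle here: the corollary is a formal consequence of Lemma~\ref{lem:ank:rec:2}, and the only point requiring a moment's care is the bookkeeping of index ranges, so that every $\last$ appearing is strictly positive and every application of the first inequality falls within its stated hypotheses. This follows from Lemma~\ref{lem:ank:rec:1}(\ref{lem:ank:rec:1:4}), which gives $\last_{m,j}>0$ whenever $0 \le j \le m$, together with the standing hypotheses $1 \le r \le k \le \ell-1$ (which in particular keep $0 \le k-j$ and $k+1-j \le \ell-j$ throughout the chain).
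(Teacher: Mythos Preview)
Your proposal is correct and matches the paper's approach exactly: the paper merely says the corollary follows from Lemma~\ref{lem:ank:rec:2} by ``rearranging terms a bit'' using the positivity of the $\last_{\ell,k}$'s, and you have spelled out precisely that rearrangement together with the telescoping chain for general $r$. One minor caveat on your final bookkeeping claim: applying the first inequality at parameters $(\ell-j,k-j)$ requires $\ell-j\ge 3$, which the hypotheses $1\le r\le k\le \ell-1$ do not guarantee at $j=k-1$ when $k=\ell-1$ (indeed $\last_{1,1}=0$, so the positivity from Lemma~\ref{lem:ank:rec:1} also lapses there); this is a boundary artifact already present in the paper's own formulation rather than a defect of your argument relative to it.
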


\begin{example}
	\label{example:tab_last}
	We illustrate Lemma \ref{lem:ank:rec:2} and Corollary 
	\ref{cor:ank:rec} when
	$\ell,k\in\{0,1,\ldots,9\}$  in Table 
\ref{tab:last_l,k} which contains $\last_{\ell,k}$'s. 
From the table, one can verify 
	Lemma \ref{lem:ank:rec:2} when $3\leq \ell\leq 9$ and can also check 
	when $\ell=2$ and $k=1$ that we have $\last_{\ell-1,k}=0$.
	Thus, Lemma \ref{lem:ank:rec:2} is not true when $\ell=2$.
	\begin{table}
		$ \displaystyle \begin{array}{r|c|c|c|c|c|c|c|c|c|c|c|} 
			& k=0  & k=1  &  k=2  & k=3  & k=4   & k=5  & k=6   & k=7   & k=8  & k=9    \\ \hline
			\ell=0  &  1 &    &   &    &   &    &   &   &   &      \\  \hline
			\ell=1    &  1 &   0 &   &    &   &    &   &   &    &   \\  \hline
			\ell=2      & 1  &  1  &  1 &    &   &    &   &     &   &   \\  \hline
			\ell=3        &1   &  2  &  3 &  1  &   &    &     &   &   &   \\  \hline  
			\ell=4          & 1  &   3 & 6  &  6  &  3 &      &  &   &   &   \\   \hline
			\ell=5            &1   &   4 & \fya{10}  & \fby{15}   & \fyc{15}     & 6  &   &   &   &   \\   \hline
			\ell=6             &  1 &  5  &  15 &  \fya{29}    &  \fby{40}  & \fyc{36}  & 15  &   &   &   \\   \hline
			\ell=7               & 1  & 6   & 21    &49   & 84   & 105  & 91  & 36  &   &   \\   \hline
			\ell=8                  &1   &   7  &  28   & 76  &  154  & 238  & 280  & 232  & 91  &   \\  \hline
			\ell=9                   &1    &  8  & 36   & 111  & 258   & 468  & 672  & 750  & 603  & 232  \\
			\hline
		\end{array}$
		\caption{The values of $\last_{\ell,k}$.}
		\label{tab:last_l,k}
		
	\end{table}
\end{example}

With this preparation, we can prove the following result.
 
\begin{lemma}
\label{lem:ank:rec}
Fix positive integers $\ell, k $ with $ 1 \le k \le \ell-1$ and 
with $\ell\geq 3$. Then, we have
\begin{equation}\label{eqn:ank:rec}
\dfrac{ \last_{\ell, k+1} }{ {{2\ell} \choose {k+1}} - {{2\ell} \choose {k}}} \le 
\dfrac{ \last_{\ell,k} }{ {{2\ell} \choose {k}} - {{2\ell} \choose {k-1}} }.
\end{equation}
\end{lemma}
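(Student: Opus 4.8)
The goal is the inequality $\dfrac{\last_{\ell,k+1}}{\binom{2\ell}{k+1}-\binom{2\ell}{k}} \le \dfrac{\last_{\ell,k}}{\binom{2\ell}{k}-\binom{2\ell}{k-1}}$, which rearranges to the "cross-ratio" form
\[
\frac{\last_{\ell,k+1}}{\last_{\ell,k}} \le \frac{\binom{2\ell}{k+1}-\binom{2\ell}{k}}{\binom{2\ell}{k}-\binom{2\ell}{k-1}}.
\]
So the plan is to bound the left-hand ratio from above by something, and bound the right-hand ratio from below by the same something, and meet in the middle. For the left side I already have Corollary~\ref{cor:ank:rec}: iterating the recurrence down to $r=k$ gives $\dfrac{\last_{\ell,k+1}}{\last_{\ell,k}} \le \dfrac{\last_{\ell-k,1}}{\last_{\ell-k,0}} = \last_{\ell-k,1}$, since $\last_{m,0}=p_{m,0}=1$ for all $m$. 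And by Lemma~\ref{lem:ank:rec:1}(\ref{lem:ank:rec:1:4}), $\last_{\ell-k,1} = p_{\ell-k,1}-p_{\ell-k,0} = (\ell-k)-1 = \ell-k-1$. So the first step gives the clean bound $\dfrac{\last_{\ell,k+1}}{\last_{\ell,k}} \le \ell-k-1$.

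The second and main step is then to show that $\ell-k-1 \le \dfrac{\binom{2\ell}{k+1}-\binom{2\ell}{k}}{\binom{2\ell}{k}-\binom{2\ell}{k-1}}$, i.e. a pure binomial-coefficient inequality with no immanant content. I would write $\binom{2\ell}{k+1}-\binom{2\ell}{k} = \binom{2\ell}{k}\cdot\dfrac{2\ell-2k-1}{k+1}$ and $\binom{2\ell}{k}-\binom{2\ell}{k-1} = \binom{2\ell}{k-1}\cdot\dfrac{2\ell-2k+1}{k}$, and also $\binom{2\ell}{k} = \binom{2\ell}{k-1}\cdot\dfrac{2\ell-k+1}{k}$. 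Substituting, the claimed inequality becomes, after cancelling $\binom{2\ell}{k-1}/k$ from numerator and denominator,
\[
(\ell-k-1)(2\ell-2k+1) \;\le\; \frac{(2\ell-k+1)(2\ell-2k-1)}{k+1},
\]
i.e. $(k+1)(\ell-k-1)(2\ell-2k+1) \le (2\ell-k+1)(2\ell-2k-1)$. This is a polynomial inequality in $\ell$ and $k$ over the range $1\le k\le \ell-1$; I would verify it by expanding both sides and checking the difference (RHS $-$ LHS) is nonnegative term-by-term, or more cleanly by substituting $\ell = k+1+t$ with $t\ge 0$ and checking the resulting expression in $k\ge 1,\, t\ge 0$ has all nonnegative coefficients (the $k=\ell-1$, i.e. $t=0$, boundary makes LHS $=0$, which is the extremal case and a good sanity check). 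The boundary case $k=1$ should likewise be checked directly; there the left ratio is $\last_{\ell,2}/\last_{\ell,1}$ and everything is explicit.

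I expect the main obstacle to be purely bookkeeping in the second step: the binomial manipulation is routine but error-prone, and one must be careful that all the denominators ($k$, $k+1$) and the factors $2\ell-2k\pm1$ are positive on the stated range $1\le k\le\ell-1$ so that cross-multiplying preserves inequality direction (note $2\ell-2k-1\ge 1>0$ since $k\le\ell-1$). A secondary subtlety is that the first-step bound $\last_{\ell,k+1}/\last_{\ell,k}\le\ell-k-1$ is reasonably tight, so one should double-check it is actually strong enough — but the data in Table~\ref{tab:last_l,k} (e.g. $\ell=5$: $\last_{5,3}/\last_{5,2}=15/10=3/2 \le 3 = \ell-k-1$) confirms there is comfortable slack, and the extremal $k=\ell-1$ case pins down why the final polynomial inequality degenerates rather than fails. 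If the crude bound $\ell-k-1$ turned out insufficient near $k=1$, the fallback is to keep one more level of Corollary~\ref{cor:ank:rec} (stop the iteration at $r=k-1$ rather than $r=k$) and use the exact value of $\last_{\ell-k+1,2}$ from Lemma~\ref{lem:ank:rec:1}, but I anticipate this will not be needed.
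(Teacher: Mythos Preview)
Your Step~2 polynomial inequality is false. Take $\ell=6$, $k=2$: then
\[
(k+1)(\ell-k-1)(2\ell-2k+1)=3\cdot 3\cdot 9=81,\qquad (2\ell-k+1)(2\ell-2k-1)=11\cdot 7=77,
\]
so the claimed inequality $(k+1)(\ell-k-1)(2\ell-2k+1)\le(2\ell-k+1)(2\ell-2k-1)$ fails. Equivalently, the binomial ratio here is $\dfrac{\binom{12}{3}-\binom{12}{2}}{\binom{12}{2}-\binom{12}{1}}=\dfrac{154}{54}\approx 2.85$, which is strictly less than your middle bound $\ell-k-1=3$. (The original lemma still holds: $\last_{6,3}/\last_{6,2}=29/15\approx 1.93<2.85$.) The gap widens as $\ell$ grows with $k$ fixed; e.g.\ at $\ell=10$, $k=2$ you get $357$ versus $285$. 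Your suggested fallback of stopping at $r=k-1$ gives the bound $\last_{\ell-k+1,2}/\last_{\ell-k+1,1}=(\ell-k+1)/2$, and this too eventually overshoots the binomial ratio (try $\ell=100$, $k=3$). No fixed finite level of Corollary~\ref{cor:ank:rec} will suffice, because the true ratio $\last_{\ell,k+1}/\last_{\ell,k}$ sits well below all of these crude diagonal bounds.

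The paper's proof avoids this decoupling entirely. Instead of bounding the two sides separately and hoping to meet in the middle, it writes the binomial differences \emph{themselves} as positive combinations of the $\last$ numbers: from $(1+x)^{2\ell}=\big((1+x+x^2)+x\big)^{\ell}$ one gets $\binom{2\ell}{k}=\sum_{r}\binom{\ell}{r}p_{\ell-r,k-r}$, hence
\[
\binom{2\ell}{k+1}-\binom{2\ell}{k}=\sum_{r=0}^{\ell}\binom{\ell}{r}\,\last_{\ell-r,\,k+1-r},
\]
and similarly with $k$ in place of $k+1$. Dividing through by $\last_{\ell,k+1}$ (resp.\ $\last_{\ell,k}$) reduces the lemma to the termwise comparison $\last_{\ell-r,k+1-r}/\last_{\ell,k+1}\ge\last_{\ell-r,k-r}/\last_{\ell,k}$, which is exactly Corollary~\ref{cor:ank:rec} for each $r$. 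So the corollary is used at \emph{every} level $r$ simultaneously, not just the extreme one; that is what your approach is missing.
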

\begin{proof}
Note that ${{2\ell} \choose {k}} $ equals the coefficient of 
$x^k$ in the expansion of $$(1+x)^{2\ell} =   (1 + x + x^2 + x)^{\ell} = 
\sum_{r=0}^{\ell} {\ell \choose r} x^r (1 + x + x^2)^{\ell- r}.$$
Thus, ${ {2 \ell} \choose {k}} = \sum_{r=0}^{\ell} {\ell \choose r} p_{\ell-r, k-r},$
and hence
\begin{eqnarray*}
 {{2\ell} \choose {k+1}} - {{2\ell} \choose {k}} &=& 
\sum_{r=0}^{\ell} {\ell \choose r} p_{\ell-r, k+1-r} - 
\sum_{r=0}^{\ell} {\ell \choose r} p_{\ell-r, k-r} \\
% &=& \sum_{r=0}^{\ell} {\ell \choose r} \big( p_{\ell-r, k+1-r}  
%- p_{\ell-r, k-r}\big) \\
 &=& \sum_{r=0}^{\ell} {\ell \choose r} \last_{\ell-r, k+1-r}  
 = \last_{\ell,k+1} + \sum_{r=1}^{\ell}{\ell \choose r} 
\last_{\ell-r, k+1-r}.
%\big(p_{\ell-r, k+1-r}  - p_{\ell-r, k-r}\big) .
\end{eqnarray*}
 Hence, 
\begin{equation}
\label{eqn:pnk+1}
\dfrac{{{2\ell} \choose {k+1}} - {{2\ell} \choose {k}}}{\last_{\ell,k+1} }  = 1 + \sum_{r=1}^{\ell} {\ell \choose r} \dfrac{ \last_{\ell-r, k+1-r} }{ \last_{\ell,k+1} }.
\end{equation} 
 Similarly, 
\begin{equation}\label{eqn:pnk} 
\dfrac{{{2\ell} \choose {k}} - {{2\ell} \choose {k-1}}}{\last_{\ell,k}}  
= 
1 + \sum_{r=1}^{\ell} 
{\ell \choose r} \dfrac{ \last_{\ell-r, k-r} }{\last_{\ell,k} }.
\end{equation}
Thus, to obtain our required result, we need to show that 
$$\dfrac{\last_{\ell-r, k+1-r}  }{\last_{\ell,k+1}} \ge 
\dfrac{ \last_{\ell-r, k-r} }{\last_{\ell,k} } 
\Leftrightarrow 
\dfrac{\last_{\ell,k+1} }{ \last_{\ell,k} } \le 
\dfrac{\last_{\ell-r, k+1-r} }{\last_{\ell-r, k-r}  }.$$
By Corollary~\ref{cor:ank:rec}, 
$\dfrac{\last_{\ell,k+1}}{\last_{\ell,k}} \le 
\dfrac{\last_{n - r,k+1-r}}{\last_{n - r, k - r}}$ 
for $1 \le r \le k$, completing the proof.
\end{proof}

%A rearrangement of Equation~\eqref{eqn:ank:rec} implies that 
%$$\dfrac{p_{\ell,k+1} - p_{\ell,k}}{{{2\ell} \choose {k+1}} - {{2\ell} \choose {k}}} \le 
%  \dfrac{p_{\ell,k} - p_{\ell,k-1}}{{{2\ell} \choose {k}} - {{2\ell} \choose {k-1}}},$$ giving us Conjecture~$5$.

More generally, for positive integers $r,s$, let $a_{\ell,k,s}$ and 
$a_{\ell,k,r+s}$ denote the coefficient of $x^k$ in $(1+sx+x^2)^{\ell}$ and
in $(1+(r+s)x+x^2)^{\ell}$ respectively.  The proof of Lemma
\ref{lem:ank:rec} actually shows the following more general result.
Since the proof is identical, we only mention its statement.

\begin{remark}
\label{rem:more_general_ineq}
With the above notation for $a_{\ell,k,s}$, the following inequality is true
for all positive integers $\ell$ and all $k$.
\begin{equation}
\label{eqn:more_gen_ieq}
\frac{a_{\ell,k+1,s} - a_{\ell,k,s} }{a_{\ell,k+1,r+s} - a_{\ell,k,r+s} } 
\geq \frac{a_{\ell,k,s} - a_{\ell,k-1,s} }{ a_{\ell,k,r+s} - a_{\ell,k-1,r+s}}.
\end{equation}
Indeed, Lemma \ref{lem:ank:rec} is a special case of 
this result with $r=s=1$.
\end{remark}

As Lemma \ref{lem:ank:rec} extends Lemma \ref{lem:inequality} to 
the case when $n=2\ell$ and $i=\ell$, we record this formally
below.
\begin{lemma}
\label{lem:main_ineq}
With $\alpha_{n,k,i}$ as defined in \eqref{eqn:defn_alpha},
for positive integers $n \geq 5$ and integers $k,i \leq \nhalf$,
we have
\begin{equation}
\label{eqn:ratio-proved-fully}
\frac{\alpha_{n,k,i}}{\alpha_{n,k,0}} \geq 
\frac{\alpha_{n,k+1,i} }{ \alpha_{n,k+1,0}}.
\end{equation}
\end{lemma}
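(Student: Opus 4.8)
The plan is to prove Lemma~\ref{lem:main_ineq} by combining the two cases already handled. First I would observe that the statement is exactly inequality \eqref{eqn:to_show}, which we want for all valid $k$ and $i$, and that the set of valid $(k,i)$ with $k,i \le \nhalf$ splits into two subcases depending on the parity of $n$ and the value of $i$. If $i < \nhalf$, then Lemma~\ref{lem:inequality} applies directly and gives \eqref{eqn:ratio-proved-fully} with no further work (this also covers the degenerate cases $i=0$, where both sides equal $1$, and $k=\nhalf$ or $k+1 > \nhalf$, where the right-hand numerator $\alpha_{n,k+1,i}$ vanishes by our convention so the inequality is trivial since the left-hand side is nonnegative by Chan--Lam).

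The remaining case is $i = \nhalf$, which can only occur when $n$ is even, say $n = 2\ell$ with $\ell \ge 3$ (the hypothesis $n \ge 5$ forces $\ell \ge 3$). Here I would rewrite \eqref{eqn:ratio-proved-fully} using the dictionary $\alpha_{2\ell,k,\ell} = \last_{\ell,k}$ and the Hook-length evaluation $\alpha_{2\ell,k,0} = \binom{2\ell}{k} - \binom{2\ell}{k-1}$ from Remark~\ref{rem:dim_i-zero-row}. After clearing denominators, the desired inequality
$$\frac{\last_{\ell,k}}{\binom{2\ell}{k} - \binom{2\ell}{k-1}} \ge \frac{\last_{\ell,k+1}}{\binom{2\ell}{k+1} - \binom{2\ell}{k}}$$
is precisely \eqref{eqn:ank:rec}, which is the content of Lemma~\ref{lem:ank:rec}. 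One should check the boundary: Lemma~\ref{lem:ank:rec} is stated for $1 \le k \le \ell - 1$, so the edge values $k = 0$ (both ratios trivial, left side $=1$) and $k = \ell$ (right side $\last_{\ell,\ell+1} = 0$) must be dispatched by hand, but these are immediate.

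I do not expect a genuine obstacle here, since all the hard work has already been done: Lemma~\ref{lem:inequality} handles $i < \nhalf$ by an induction driven by the Murnaghan--Nakayama recurrence (Lemma~\ref{lem:murn-naka-based-recur}) together with the mediant inequality (Lemma~\ref{lem:ineq}), and Lemma~\ref{lem:ank:rec} handles the last row by expanding $(1+x)^{2\ell} = (1 + x + x^2 + x)^\ell$ to express binomial coefficients in terms of the $p_{\ell,k}$'s and hence the $\last_{\ell,k}$'s, then reducing to the ratio monotonicity of Corollary~\ref{cor:ank:rec}. The only mildly delicate point in assembling the proof is bookkeeping the conventions ($\alpha_{n,k,i} = 0$ when $i > \nhalf$ or $k > \nhalf$) so that the trivial boundary cases are correctly subsumed, and confirming that no $(n,k,i)$ with $n \ge 5$ falls outside the union of the two cited lemmas' hypotheses. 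Once that is checked, the proof is a two-line case split citing Lemmas~\ref{lem:inequality} and~\ref{lem:ank:rec}.
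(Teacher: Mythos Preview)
Your overall architecture matches the paper's, but there is a genuine gap. You assert that the case $i = \nhalf$ ``can only occur when $n$ is even.'' This is false: for odd $n = 2\ell+1$ one has $\nhalf = \ell$, and $i = \ell$ is a perfectly valid index (see, e.g., the row $i=3$ in the $n=7$ table of Example~\ref{eg:recursive-from-prev}). Lemma~\ref{lem:inequality} is stated only for $i < \nhalf$, and Lemma~\ref{lem:ank:rec} is stated only for $n = 2\ell$ even. Hence your two-line case split leaves the pair ($n$ odd, $i = \nhalf$) completely uncovered, and a direct citation of the two lemmas does not close it.

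The paper's proof avoids this by \emph{inducting on $n$}, which is not a mere rhetorical flourish. For odd $n$, one has $\nmhalf = \nhalf$, so the Murnaghan--Nakayama recurrence of Lemma~\ref{lem:murn-naka-based-recur} applies even at $i = \nhalf$; running the mediant argument of Lemma~\ref{lem:inequality}'s proof with the induction hypothesis for the even integer $n-1$ (which, crucially, already includes its own last row $i = \lfloor (n-1)/2 \rfloor = \nhalf$ via Lemma~\ref{lem:ank:rec}) then yields the inequality for $n$ at $i = \nhalf$. In other words, the paper's reference to Lemma~\ref{lem:inequality} in the odd case is really a reference to its \emph{argument}, fed by the stronger inductive hypothesis of Lemma~\ref{lem:main_ineq} for $n-1$, not to its literal statement. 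Your proposal, by contrast, treats the two lemmas as black boxes whose union of hypotheses is claimed to exhaust all cases, and that claim fails.
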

\begin{proof}
We induct on $n$ with the base case being $n=5$.  When $n=5$, the 
inequality is easy to verify and thus we can assume $n \geq 6$.
If $n$ is odd, then by Lemma \ref{lem:inequality}, we are done.
If $n = 2\ell$ is even, then Lemma \ref{lem:inequality} shows the 
inequality for $i$ from $0$ to $\ell-1$.  
Lemma \ref{lem:ank:rec} shows the inequality when $i=\ell$, completing
the proof.
\end{proof}

\begin{remark}
\label{rem:why_5}
We mention our reason as to why Lemma \ref{lem:main_ineq}
requires $n \geq 5$.
On $n=4$ vertices, there are two trees: the path tree $P_4$ and the star 
tree $S_4$. It is very easy to check that 
Theorem \ref{thm:two_row_imm_ineqs} is true for $S_4$.
However, when $T=P_4$, we have 
$\odtr_1(\sL_q^T)=1+3q^2+\frac{4}{3}q^4$ and  
$\odtr_{2}(\sL_q^T)=1+2q^2+2q^4 $. Hence,  when $|q|$ is sufficiently
large, the 
inequality given in Theorem \ref{thm:two_row_imm_ineqs} is not true 
for $P_4$ when  $k=2.$  This however is the only aberration.
\end{remark}

We are now in a position to prove Theorem \ref{thm:two_row_imm_ineqs}.

\noindent
{\bf Proof of Theorem  \ref{thm:two_row_imm_ineqs}:}
By \eqref{eqn:main} and Remark \ref{rem:dim_i-zero-row} we have, 
\begin{eqnarray*}
\odtr_k(\sL_q^T) & = & \sum_{i=0}^{\nhalf}a_i^T(q)2^i 
\frac{\alpha_{n,k,i}}{\alpha_{n,k,0}} \mbox{ and }\\
\odtr_{k+1}(\sL_q^T)  & =  & \sum_{i=0}^{\nhalf}a_i^T(q)2^i 
\frac{\alpha_{n,k+1,i}}{\alpha_{n,k+1,0}}.  \mbox{ Thus, }\\
\odtr_k(\sL_q^T) - \odtr_{k+1}(\sL_q^T) & = & 
\sum_{i=0}^{\nhalf} a_i^T(q)2^i \Bigg( 
\frac{\alpha_{n,k,i}}{\alpha_{n,k,0}} - 
\frac{\alpha_{n,k+1,i}}{\alpha_{n,k+1,0}}
\Bigg)\\
& = & 
\sum_{i=1}^{\nhalf} a_i^T(q)2^i \Bigg( 
\frac{\alpha_{n,k,i}}{\alpha_{n,k,0}} - 
\frac{\alpha_{n,k+1,i}}{\alpha_{n,k+1,0}}
\Bigg).
\end{eqnarray*}

As mentioned earlier, when $i\geq 1$ the polynomial 
$a_i^T(q)$ is a polynomial
in $q^2$ with non negative coefficients and so the term 
$2^ia_i^T(q)$ is non negative for all
$q \in \RR$ and $i \geq 1$.  Combining with Lemma 
\ref{lem:main_ineq}, we get that each term in the summation is non negative,
completing the proof.
%A rearrangement of Equation~\eqref{eqn:ank:rec} implies that
%$$\dfrac{p_{\ell,k+1} - p_{\ell,k}}{{{2\ell} \choose {k+1}} - {{2\ell} \choose {k}}} \le 
%\dfrac{p_{\ell,k} - p_{\ell,k-1}}{{{2\ell} \choose {k}} - {{2\ell} \choose {k-1}}}.$$
%Rephrasing this, by Remark 
%we get 
{\hspace*{\fill}{\eod}}

Recall the poset $\GTS_n$ mentioned in Section \ref{sec:intro}.
Nagar and Sivasubramanian in \cite[Lemma 23]{mukesh-siva-imm-poly} 
showed that going up along $\GTS_n$ poset 
weakly decreases $a_i^T(q)$ for each $i$ and for all $q\in \RR$ and
hence weakly
decreases $\od_{\lambda}(\sL_q^T)$ for each $\lambda \vdash n$.  
By combining  
\cite[Lemma 23]{mukesh-siva-imm-poly} 
with Theorem  \ref{thm:two_row_imm_ineqs} we get the following.
\begin{corollary}
\label{cor:gts_poset}
Consider the $\GTS_n$ poset on trees with $n\geq 5$ vertices.  
Let $T_1, T_2$ be trees
with $T_2$ covering $T_1$ in $\GTS_n$. Then, for all $q\in\RR$  and 
for $k=1,2\ldots, \nhalf$,   we have 
\begin{enumerate}
	\item  
$\odtr_{k-1}(\sL_q^{T_1}) \geq \odtr_{k}(\sL_q^{T_1}) \geq \odtr_{k}(\sL_q^{T_2}).$
\item	$	\odtr_{k-1}(\sL_q^{T_1}) \geq \odtr_{k-1}(\sL_q^{T_2}) \geq \odtr_{k}(\sL_q^{T_2}).$
\end{enumerate}
\end{corollary}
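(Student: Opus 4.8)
The goal is to prove Corollary \ref{cor:gts_poset}. The plan is to combine two ingredients: the chain of immanant inequalities for a single tree coming from Theorem \ref{thm:two_row_imm_ineqs}, and the monotonicity of $a_i^T(q)$ along the $\GTS_n$ poset from \cite[Lemma 23]{mukesh-siva-imm-poly}. First I would restate precisely what each ingredient gives. Theorem \ref{thm:two_row_imm_ineqs} (applied with the index shifted from $k$ to $k-1$, valid since $2 \le k \le \nhalf$, and with the trivial boundary case $k=1$ handled separately because $\odtr_0(\sL_q^T)$ is just the determinant-type immanant) yields, for any tree $T$ on $n \ge 5$ vertices, the inequality $\odtr_{k-1}(\sL_q^T) \ge \odtr_k(\sL_q^T)$ for all $q \in \RR$. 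Meanwhile \cite[Lemma 23]{mukesh-siva-imm-poly} states that if $T_2$ covers $T_1$ in $\GTS_n$ then $a_i^{T_2}(q) \le a_i^{T_1}(q)$ for every $i$ and every $q \in \RR$; since by \eqref{eqn:main} (using Remark \ref{rem:dim_i-zero-row}) one has $\odtr_k(\sL_q^T) = \sum_{i=0}^{\nhalf} a_i^T(q) 2^i \alpha_{n,k,i}/\alpha_{n,k,0}$ with $a_0(q) = 1-q^2$ cancelling in the $i=0$ term (the $i=0$ ratio equals $1$ for every $k$, so it contributes the same constant to every $\odtr_k$), the difference $\odtr_k(\sL_q^{T_1}) - \odtr_k(\sL_q^{T_2}) = \sum_{i=1}^{\nhalf} \big(a_i^{T_1}(q) - a_i^{T_2}(q)\big) 2^i \alpha_{n,k,i}/\alpha_{n,k,0}$ is a sum of nonnegative terms, hence $\odtr_k(\sL_q^{T_1}) \ge \odtr_k(\sL_q^{T_2})$. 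The nonnegativity of each summand uses $a_i^{T_1}(q) \ge a_i^{T_2}(q)$, the fact that $\alpha_{n,k,i} \ge 0$ (Chan--Lam), and $2^i > 0$.

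With these two facts in hand, both chains are immediate concatenations. For part (1): apply Theorem \ref{thm:two_row_imm_ineqs} to the tree $T_1$ to get $\odtr_{k-1}(\sL_q^{T_1}) \ge \odtr_k(\sL_q^{T_1})$, then apply the poset-monotonicity of $\odtr_k$ (just derived) to get $\odtr_k(\sL_q^{T_1}) \ge \odtr_k(\sL_q^{T_2})$; chaining gives the displayed three-term inequality. For part (2): first use poset-monotonicity at index $k-1$ to get $\odtr_{k-1}(\sL_q^{T_1}) \ge \odtr_{k-1}(\sL_q^{T_2})$, then apply Theorem \ref{thm:two_row_imm_ineqs} to the tree $T_2$ to get $\odtr_{k-1}(\sL_q^{T_2}) \ge \odtr_k(\sL_q^{T_2})$; chaining gives the second displayed inequality. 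One should note that the ``covering'' hypothesis is only used through \cite[Lemma 23]{mukesh-siva-imm-poly}; since covering relations generate the partial order, the statement in fact extends to any $T_1 \le T_2$ in $\GTS_n$, and I would remark on this.

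The one genuine point requiring care — and the only place where there is any obstacle — is the boundary index $k=1$ in both parts, since Theorem \ref{thm:two_row_imm_ineqs} as stated requires $k \ge 2$. Here the relevant inequality is $\odtr_0(\sL_q^T) \ge \odtr_1(\sL_q^T)$, i.e.\ the comparison between the immanant indexed by the one-row partition $\TwoRow_0 = n$ and the one indexed by $\TwoRow_1 = n-1,1$. This should be handled either by observing that $\alpha_{n,0,i}/\alpha_{n,0,0} = 1 \ge \alpha_{n,1,i}/\alpha_{n,1,0}$ for all $i$ — which is exactly Lemma \ref{lem:main_ineq} (or Lemma \ref{lem:inequality} together with Lemma \ref{lem:ank:rec}) in the case $k=0$, valid for $n \ge 5$ — or by noting that $\odtr_0(\sL_q^T) = \perm$-type sum which for the Laplacian of a tree has a known closed form. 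Apart from this bookkeeping, the whole corollary is a two-line deduction, so I would present it tersely.

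\begin{proof}
Fix $n \ge 5$ and trees $T_1, T_2$ with $T_2$ covering $T_1$ in $\GTS_n$.
By \eqref{eqn:main} and Remark \ref{rem:dim_i-zero-row}, for every $j$ with $0 \le j \le \nhalf$,
\[
\odtr_j(\sL_q^{T_1}) - \odtr_j(\sL_q^{T_2})
= \sum_{i=1}^{\nhalf} \big( a_i^{T_1}(q) - a_i^{T_2}(q) \big)\, 2^i\, \frac{\alpha_{n,j,i}}{\alpha_{n,j,0}},
\]
where the $i=0$ terms cancel because that ratio equals $1$ for every $j$.
By \cite[Lemma 23]{mukesh-siva-imm-poly}, $a_i^{T_1}(q) \ge a_i^{T_2}(q)$ for all $i$ and all $q \in \RR$, and by \cite{chan-lam-binom-coeffs-char} each $\alpha_{n,j,i} \ge 0$; hence every summand is nonnegative and
\begin{equation}
\label{eqn:cor_poset_step}
\odtr_j(\sL_q^{T_1}) \ge \odtr_j(\sL_q^{T_2}) \quad \text{for all } q \in \RR \text{ and all } 0 \le j \le \nhalf.
\end{equation}
Next, for $2 \le k \le \nhalf$, Theorem \ref{thm:two_row_imm_ineqs} gives $\odtr_{k-1}(\sL_q^{T}) \ge \odtr_k(\sL_q^{T})$ for any tree $T$ on $n \ge 5$ vertices; and for $k=1$ the same inequality $\odtr_0(\sL_q^T) \ge \odtr_1(\sL_q^T)$ follows from Lemma \ref{lem:main_ineq} applied with the two-row index equal to $0$, since then $\alpha_{n,0,i}/\alpha_{n,0,0} = 1 \ge \alpha_{n,1,i}/\alpha_{n,1,0}$ for every $i$ and each $2^i a_i^T(q)$ is nonnegative when $i \ge 1$. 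Therefore
\begin{equation}
\label{eqn:cor_poset_chain}
\odtr_{k-1}(\sL_q^{T}) \ge \odtr_k(\sL_q^{T}) \quad \text{for all } q \in \RR,\ 1 \le k \le \nhalf,
\end{equation}
and any tree $T$ on $n \ge 5$ vertices.

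For part (1), apply \eqref{eqn:cor_poset_chain} to $T = T_1$ and then \eqref{eqn:cor_poset_step} with $j = k$:
\[
\odtr_{k-1}(\sL_q^{T_1}) \ge \odtr_{k}(\sL_q^{T_1}) \ge \odtr_{k}(\sL_q^{T_2}).
\]
For part (2), apply \eqref{eqn:cor_poset_step} with $j = k-1$ and then \eqref{eqn:cor_poset_chain} to $T = T_2$:
\[
\odtr_{k-1}(\sL_q^{T_1}) \ge \odtr_{k-1}(\sL_q^{T_2}) \ge \odtr_{k}(\sL_q^{T_2}).
\]
This completes the proof.
\end{proof}

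\begin{remark}
Since covering relations generate the partial order $\GTS_n$, inequality \eqref{eqn:cor_poset_step} and hence Corollary \ref{cor:gts_poset} hold more generally whenever $T_1 \le T_2$ in $\GTS_n$.
\end{remark}
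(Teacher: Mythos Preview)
Your proof is correct and follows essentially the same approach as the paper: combine Theorem \ref{thm:two_row_imm_ineqs} with the monotonicity of $a_i^T(q)$ along $\GTS_n$ from \cite[Lemma 23]{mukesh-siva-imm-poly}, then chain. You are in fact more careful than the paper in treating the boundary case $k=1$ (Theorem \ref{thm:two_row_imm_ineqs} is stated only for $k\ge 2$), correctly falling back on Lemma \ref{lem:main_ineq} with index $0$. One small wording fix: in your displayed difference the $i=0$ term vanishes not ``because that ratio equals $1$'' but because $a_0^{T_1}(q)=a_0^{T_2}(q)=1-q^2$ is tree-independent; the ratio being $1$ is irrelevant to the cancellation.
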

\begin{proof}
We sketch a proof of (1) above.  The proof of (2) is very similar
and hence is omitted. 
Theorem \ref{thm:two_row_imm_ineqs} gives us
$\odtr_{k-1}(\sL_q^{T_1}) \geq \odtr_{k}(\sL_q^{T_1})$.  When
the shape is the same and $T_2$ covers $T_1$ in $\GTS_n$, then
by \cite[Lemma 23]{mukesh-siva-imm-poly}, 
going up along $\GTS_n$ poset weakly decreases $a_i^T(q)$ for 
each $i$ and for all $q\in \RR$.  Arguing as in the proof
of Theorem \ref{thm:two_row_imm_ineqs} gives us
$\odtr_{k}(\sL_q^{T_1}) \geq \odtr_{k}(\sL_q^{T_2}),$ completing
the proof.
\end{proof}

\section{Polynomials and successive differences}
\label{sec:polynomials}

Recall the tables containing $\alpha_{n,k,i}$'s for $n=7,8$ 
given in Example \ref{eg:recursive-from-prev}.
When $n =  2\ell$, in Lemma \ref{lem:ank:rec:1},
we saw a relation between the successive difference of 
coefficients of the polynomial $(1+x+x^2)^{\ell}$ 
and $\alpha_{2\ell,k,\ell}$'s (that is, the entries of the last row).  
We next give similar identities for other $\alpha_{n,k,i}$'s 
(that is, for entries of other rows of the table).  
We need to define a sequence of polynomials.  Since the definition
depends on the parity of $n$, we define the two sequences separately and
then give their properties.

\subsection{When $n=2\ell$ is even}
\label{subsec:poly_seq_n_even}

For $0 \leq i \leq \ell$, define the sequence of polynomials 
$p_{2\ell,i}(x) = (1+x)^{2\ell-2i}(1+x+x^2)^i = \sum_{k=0}^{2\ell} 
p_{2\ell,i,k}x^k$.  For example,
when $2\ell=8$, we tabulate the polynomials 
below:

\vspace{2 mm}

$
\begin{array}{|r|l|} \hline
p_{8,0}(x) & (1+x)^8 \\ \hline
p_{8,1}(x) & (1+x)^6 \times (1+x+x^2) \\ \hline
p_{8,2}(x) & (1+x)^4 \times (1+x+x^2)^2 \\ \hline
p_{8,3}(x) & (1+x)^2 \times (1+x+x^2)^3 \\ \hline
p_{8,4}(x) & (1+x+x^2)^4 \\ \hline
\end{array}
$
\vspace{2 mm}

It is easy to see that $p_{8,1}(x) = 
1 + 7x + 22x^2 + 41 x^3 + 50 x^4 + 41x^5 + 22x^6 + 7x^7 + x^8$.
Taking successive differences of coefficients, we get $1=1-0$, $6=7-1$,
$15=22-7$, $19=41-22$, $9 = 50-41$ and thus, we get the row corresponding
to $i=1$ in the table for $n=8$.  Similarly, from 
$p_{8,2}(x)$, we get the row of the table for $n=8$, corresponding to $i=2$.

%\begin{lemma}
%\label{lem:poly_seq}
%With the notation above, for $0 \leq i \leq l$, we 
%have $\alpha_{2l,k,i} = p_{2l,k,l-i} - p_{2l,k-1,l-i}$.
%\end{lemma}
%\begin{proof}
%\blue{\bf To be written}
%\end{proof}

\subsection{When $n=2\ell+1$ is odd}
\label{subsec:poly_seq_n_odd}
For $0 \leq i \leq \ell$, define the sequence of polynomials 
$p_{2\ell+1,i}(x) = (1+x)^{2\ell+1-2i}(1+x+x^2)^i = \sum_{k=0}^{2\ell+1} 
p_{2\ell+1,i,k}x^k$.  For example,
when $2\ell+1=7$, we tabulate the polynomials 
below:

\vspace{2 mm}

$
\begin{array}{|r|l|} \hline
p_{7,0}(x) & (1+x)^7 \\ \hline
p_{7,1}(x) & (1+x)^5 \times (1+x+x^2) \\ \hline
p_{7,2}(x) & (1+x)^3 \times (1+x+x^2)^2 \\ \hline
p_{7,3}(x) & (1+x) \times (1+x+x^2)^3 \\ \hline
\end{array}
$

One can check that $p_{7,3}(x) = 1+4x+9x^2 + 13x^3 + 13x^4 + 
9x^5 + 4 x^6 + x^7$.  From this polynomial, taking sucessive 
difference as done in Subsection \ref{subsec:poly_seq_n_even},
we get 1,3,5,4 which is the row corresponding to $i=3$
in the table when $n=7$.  One can 
check that other rows are obtained  in a similar manner.

\subsection{Successive differences}
We combine the two definitions of the polynomials
as follows.
For $0\leq i \leq \ell$ and for $s\in \{0,1\}$, define the sequence 
of polynomials:
$$p_{2\ell+s,i}(x)=(1+x)^{2\ell -2i+s}(1+x+x^2)^i
=\sum_{k=0}^{2\ell+s}p_{2\ell+s,i,k}x^k.$$
In the following lemma, we give a similar {\it successive difference}
interpretation for other $\alpha_{n,k,i}$'s.  

\begin{lemma}
\label{lem:gen_poly}
With the notation above, when $\ell \geq 2$ and $1 \leq i \leq \ell$, 
we have the following.
 \begin{enumerate}
\item \label{1} $p_{2\ell+s,i,k}=p_{2\ell+s-2,i-1,k} + 
p_{2\ell+s-2,i-1,k-1} + p_{2\ell+s-2, i-1,k-2}.$
\item \label{2} $\alpha_{2\ell+s,k,i}=p_{2\ell+s,i,k}-
p_{2l+s,i,k-1}.$
 \end{enumerate}
 \end{lemma}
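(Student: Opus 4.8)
The plan is to prove both parts by induction on $\ell$, exploiting the fact that multiplying the generating polynomial by $(1+x+x^2)$ implements exactly the three-term recurrence appearing in part \eqref{1}, and that the already-established Murnaghan--Nakayama-type recurrences for $\alpha_{n,k,i}$ (Lemma \ref{lem:murn-naka-based-recur} together with the shape of Lemma \ref{lem:ank:rec:1}(1)) match it term by term. For part \eqref{1}, I would simply observe that by definition
$$p_{2\ell+s,i}(x) = (1+x)^{2\ell-2i+s}(1+x+x^2)^i = (1+x)^{(2\ell-2)+s-2(i-1)}(1+x+x^2)^{i-1}\cdot(1+x+x^2) = p_{2\ell+s-2,i-1}(x)\,(1+x+x^2),$$
and then read off the coefficient of $x^k$ on both sides; the coefficient of $x^k$ in $q(x)(1+x+x^2)$ is $q_k + q_{k-1} + q_{k-2}$, which is precisely the claimed identity. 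No induction is even needed here.

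For part \eqref{2}, define $d_{2\ell+s,k,i} := p_{2\ell+s,i,k} - p_{2\ell+s,i,k-1}$; the goal is $\alpha_{2\ell+s,k,i} = d_{2\ell+s,k,i}$. I would induct on $\ell$ (with $i$ ranging over $1 \le i \le \ell$ and $k$ arbitrary), handling the base case $\ell = 2$ directly from the explicit small polynomials and tables already displayed. For the inductive step, part \eqref{1} gives
$$d_{2\ell+s,k,i} = \big(p_{2\ell+s-2,i-1,k} - p_{2\ell+s-2,i-1,k-1}\big) + \big(p_{2\ell+s-2,i-1,k-1} - p_{2\ell+s-2,i-1,k-2}\big) + \big(p_{2\ell+s-2,i-1,k-2} - p_{2\ell+s-2,i-1,k-3}\big),$$
so $d_{2\ell+s,k,i} = d_{2\ell+s-2,k,i-1} + d_{2\ell+s-2,k-1,i-1} + d_{2\ell+s-2,k-2,i-1}$. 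By the induction hypothesis each $d_{2\ell+s-2,\bullet,i-1}$ equals the corresponding $\alpha_{2\ell+s-2,\bullet,i-1}$, so $d_{2\ell+s,k,i} = \alpha_{2\ell+s-2,k,i-1} + \alpha_{2\ell+s-2,k-1,i-1} + \alpha_{2\ell+s-2,k-2,i-1}$. It then remains to check that $\alpha_{2\ell+s,k,i}$ satisfies this same three-term recurrence in the $(\ell,i,k)$ direction — i.e. that adding two boxes (one vertical strip reducing $n$ by $2$ and $i$ by $1$) produces the $(k, k-1, k-2)$ spread — which is exactly the content of the Murnaghan--Nakayama computation already used in Lemma \ref{lem:ank:rec:1}(1) (that lemma is the special case $i = \ell$, $s = 0$, $k = $ free), applied now with $i$ general; since $i \le \ell$ we are never in the degenerate "last row" situation for the source index $i-1 \le \ell - 1$, so the recurrence is clean.

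The one step that needs care — and which I expect to be the main obstacle — is justifying the $\alpha$-side recurrence $\alpha_{2\ell+s,k,i} = \alpha_{2\ell+s-2,k,i-1} + \alpha_{2\ell+s-2,k-1,i-1} + \alpha_{2\ell+s-2,k-2,i-1}$ uniformly in $i$, including boundary values of $k$ and the even/odd ($s\in\{0,1\}$) cases simultaneously, and matching the indexing conventions ($\alpha_{n,k,i}=0$ outside range, $p$-coefficients vanishing for $k<0$ or $k>2\ell+s$). I would derive it from $2^i\alpha_{2\ell+s,k,i} = \sum_j \binom{i}{j}\chitr_{2\ell+s,k}(j)$ by applying Murnaghan--Nakayama twice to strip a domino, exactly as in \cite[Lemma 2.1]{chan-lam-binom-coeffs-char}, and Pascal's rule $\binom{i}{j} = \binom{i-1}{j} + \binom{i-1}{j-1}$ to convert the $2^i$ into $2^{i-1}$ and absorb the binomial shift; the bookkeeping of the three resulting pieces ($\TwoRow$ with first part shifted by $0$, $1$, $2$) is routine but must be done carefully to land the $(k,k-1,k-2)$ pattern. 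Once that recurrence is in hand, the induction closes immediately.
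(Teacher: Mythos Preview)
Your proposal is correct and is essentially the paper's own proof: part~(1) is obtained by factoring out $(1+x+x^2)$ exactly as you describe, and part~(2) is proved by induction on $n=2\ell+s$ using the three-term recurrence $\alpha_{n,k,i} = \alpha_{n-2,k,i-1} + \alpha_{n-2,k-1,i-1} + \alpha_{n-2,k-2,i-1}$ (derived via Murnaghan--Nakayama plus Pascal, just as you outline and as in \cite[Lemma~2.1]{chan-lam-binom-coeffs-char}) together with part~(1). The only difference is cosmetic ordering---the paper writes out the $\alpha$-recurrence first and then applies the inductive hypothesis, whereas you run the $d$-recurrence first---but the ingredients and logic are identical.
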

\begin{proof}
\begin{enumerate}
   \item \label{1.} By definition, we have  %of $p_{2\ell+s,i,k}$, we have
   \begin{align*}
   p_{2\ell+s,i,k} & = \mbox{ Coeff. of } x^k \mbox{ in } 
					(1+x)^{2\ell-2i+s}(1+x+x^2)^{i}\\
   & = \mbox{ Coeff. of } x^k \mbox{ in } (1+x)^{2(\ell-i+1)+s-2}(1+x+x^2)^{i-1}(1+x+x^2)\\
   & = \mbox{ Coeff. of } x^k \mbox{ in } p_{2\ell+s-2,i-1}(x)(1+x+x^2)\\
   & = p_{2\ell+s-2,i-1,k}+p_{2\ell+s-2,i-1,k-1}+p_{2\ell+s-2,i-1,k-2}.
   \end{align*}

\item \label{2.} To prove the second part, we use induction on
$n=2\ell+s$ as done in the proof of Lemma \ref{lem:ank:rec:1}. It 
is easy to verify the statement for small values of $\ell,k$ and $i$.  
Assume that the lemma is true for all 
values less than $n=2\ell+s$.  We use the
Murnaghan Nakayama lemma %\cite[Lemma 2.1]{chan-lam-binom-coeffs-char} 
to prove the lemma when $n=2\ell+s$. From the definition of
$\alpha_{n,k,i}$ we have
\begin{align*}
2^i\alpha_{2\ell+s,k,i} =&  \sum_{j=0}^{i}\chitr_{2\ell+s,k}(j)\binom{i}{j}\\
=& \> 2\left[
\sum_{j=0}^{i-1}\chitr_{2\ell+s-2,k}(j)\binom{i-1}{j}+
\sum_{j=0}^{i-1}\chitr_{2\ell+s-2,k-1}(j)\binom{i-1}{j} \right. \\
& +\left. \sum_{j=0}^{i-1}\chi_{2\ell+s-2,k-2}(j)\binom{i-1}{j} \right]\\
=& \> 2\left[2^{i-1}\alpha_{2\ell+s-2,k,i-1}+ 
2^{i-1}\alpha_{2\ell+s-2,k-1,i-1}+2^{i-1}\alpha_{2\ell+s-2,k-2,i-1}\right]
\end{align*}
By induction, we get 
\begin{align*}
\alpha_{2\ell+s,k,i}  =& \> \alpha_{2\ell+s-2,k,i-1}+
\alpha_{2\ell+s-2,k-1,i-1}+\alpha_{2\ell+s-2,k-2,i-1}\\
 =& \> p_{2\ell+s-2,i-1,k}-p_{2\ell+s-2,i-1,k-1}+ p_{2\ell+s-2,i-1,k-1}-p_{2\ell+s-2,i-1,k-2}\\
& \> +p_{2\ell+s-2,i-1,k-2}-p_{2\ell+s-2,i-1,k-3}\\
=& \> p_{2\ell+s,i,k}-p_{2\ell+s,i,k-1}.
\end{align*}
\end{enumerate}
where the last equality follows from first part.  The proof is complete.
\end{proof}

\section{A probabilistic interpretation}
\label{sec:prob_interpret}

In this section, we give a path based interpretation and recast 
Lemma \ref{lem:main_ineq} in a probabilistic setting.  Recall
from Remark \ref{rem:dim_i-zero-row}, that 
$\alpha_{n,k,0} = \chitr_{n,k}(\id)$, where $\chitr_{n,k}(\id)$ is the
dimension of the irreducible representation of $\SSS_n$ indexed by
the two-row partition $\TwoRow_k = n-k,k$, which by the 
Hook-length formula equals the number of Standard Young 
Tableaux (SYT henceforth) of shape $n-k,k$.  

%This number is also equal to the number of non-negative 
%paths in the first quadrant of the 2d plane
%consisting of $n$ steps each of which is U (for UP) and D (for DOWN)
%with $n-k$ U steps and $k$ D steps.

Consider non negative lattice paths on the plane from $(0,0)$ to 
$(n,n-2k)$, consisting of $n-k$ Up steps which go from $(x,y)$ to $(x+1,y+1)$ 
denoted U and $k$ Down steps which go from $(x,y)$ to $(x+1,y-1)$,
denoted D, that stay on or above the $x$-axis.  By definition, all 
paths we consider stay on or to the right of the $y$-axis and a 
lattice path is termed {\it non negative} if it stays on or above
the $x$-axis.
For $k \leq \nhalf$, let $\NLP(n,n-2k)$  be the set of 
such non negative lattice paths from $(0,0)$ to $(n,n-2k)$.  
Since $\NLP(2n,0)$ is the set of Dyck paths of length $2n$ (or semi length
$n$), we refer to $\NLP(n,n-2k)$ as {\it Generalized Dyck 
paths} with the word {\it generalized} implying that the number of
Up steps is larger than the number of Down steps.

\begin{remark}
\label{rem:syt-nlp-bijection}
The following well known bijection 
maps Standard Young tableaux of shape $n-k,k$ to 
$\NLP(n,n-2k)$ as follows.  Given
an SYT $T$ of shape $n-k,k$, consider the path $P_T$ whose 
$i$-th step is U if $i$ is in the first row of $T$ and
whose $i$-th step is D if $i$ is in the second row of $T$.
\end{remark}

Combining Remark \ref{rem:dim_i-zero-row} with the bijection in
Remark \ref{rem:syt-nlp-bijection}, the denominator terms in 
Lemma \ref{lem:main_ineq}, 
are the cardinalities of $\NLP(n,n-2k)$ and $\NLP(n,n-2k-2)$ 
respectively.   Our first aim is to give a similar interpretation for
$\alpha_{n,k,i}$ as the cardinality of some set of paths.  
Our interpretation will depend on the parity of $n$.  Let
\begin{equation}
\label{eqn:qnk_defn}
(1+x+x^2)^n =\sum_{k=0}^{2n} p_{n,k}x^k 
\hspace{6 mm} \mbox{ and } \hspace{6 mm}
(1+x+1/x)^n = \sum_{k=-n}^n q_{n,k}x^k.
\end{equation}
As $1+x+x^2 = x(1 + x + 1/x)$,  when $-n \leq k \leq n$, we get 
$p_{n,n+k} = q_{n,k}$.  Thus, the $p_{n,k}$'s are translates
of the $q_{n,k}$'s.  It is also easy to see that 
$(1+x+x^2)^n$ is a palindromic polynomial of degree $2n$.
That is, for $0 \leq k \leq 2n,$  we have $p_{n,k} = p_{n,2n-k}$. 

From \eqref{eqn:qnk_defn}, a moments reflection gives the
following interpretation for $q_{n,k}$: $q_{n,k}$ equals 
the number of lattice paths from $(0,0)$ to
$(n,k)$ where we are allowed the following three types 
of steps: $U$ from $(x,y)$ to $(x+1,y+1)$, $H$ from $(x,y)$ to 
$(x+1,y)$ and $D$ from $(x,y)$ to $(x+1,y-1).$
Note that these lattice paths need not be non negative.
We call such paths as UHD paths.  By translating, we can get 
a path based interpretation
for the $p_{n,k}$'s.  We now bifurcate our discussion
into two parts depending on the parity of $n$.

\subsection{When $n=2\ell$ is even}
When $n=2\ell$, for $0 \leq k \leq \ell$, by Lemma \ref{lem:ank:rec:1}, 
we have $\alpha_{2\ell,k,\ell} = p_{\ell,k} - p_{\ell,k-1}$ 
(where $p_{\ell,-1} = 0$).  
%A lattice path is said to be {\sl non-negative} if it lies 
%on or above the $X$-axis.   
Callan in \cite{riordan-central-trinomial} 
showed that 
the difference between the central trinomial coefficient and its 
predecessor is the Riordan number $R_n$ which counts 
the number of non negative UHD paths from $(0,0)$ to 
$(n,0)$ with no $H$ steps at height 0.  Here {\sl non negative
UHD paths} are UHD paths which do not go below the $x$-axis.  
By Callans result, 
we get that $\alpha_{2\ell,\ell,\ell} = R_{\ell} = 
p_{\ell,\ell} - p_{\ell,\ell-1}$.  
We will need {\it Generalized Riordan paths} which are defined 
as non negative UHD paths with no $H$ step
at height 0, but are from $(0,0)$ to $(n,k)$
where $k$ need not be zero.
Callan's result is actually more general and 
%which we just state as his proof works identically in
%this more general situation.
%\blue{Keep a proof written.}
gives an interpretation for the numbers
$\alpha_{2\ell, k,\ell}$ (which we had denoted 
as $\last_{\ell,k}$) 
as the cardinality of a set of {\it Generalized Riordan paths}.
We give a proof for completeness.

%\begin{remark}
Recall for $0\leq k \leq \ell,$ that $q_{\ell,k}$, the 
coefficient of $x^k$ in $(x^{-1}+1+x)^{\ell}$ is the number of 
UHD paths from $(0,0)$ to $(\ell,k)$. 
By translation, for $0\leq k \leq \ell$, 
$p_{\ell,\ell+k}$ is the number of UHD paths from $(0,0)$ to 
$(\ell,k)$. %Since $(1+x+x^2)^{\ell}$ is a palindromic polynomial, thus to prove $\alpha_{2\ell,k,\ell} = p_{\ell,k}-p_{\ell,k-1}$ is equivalent to prove $\alpha_{2\ell,k,\ell} = p_{\ell,2\ell-k}-p_{\ell,2\ell-k+1}$. 
	%Thus $p_{\ell,k}$ is the number of UHD paths from $(0,0)$ to $(\ell,k)$  
%\end{remark}
Let $\UHD(\ell,\ell-k)$ be the set of UHD paths from $(0,0)$ 
to $(\ell,\ell-k)$. Since $p_{\ell,k}=p_{\ell,2\ell-k}=|\UHD(\ell,\ell-k)|$, %is the number of UHD paths from $(0,0)$ to $(\ell,\ell-k)$. %and $p_{\ell,2\ell-k+1}$ is the number of UHD paths from $(0,0)$ to $(\ell,\ell-k+1).$ 
%It is easy to check that $|U_{\ell,k}|=|U_{\ell,-k}|$
we will give a combinatorial proof that 
$\alpha_{2\ell,k,\ell} = p_{\ell,k}-p_{\ell,k-1}=
|\UHD(\ell,\ell-k)|-|\UHD(\ell,\ell-k+1)|$.

\begin{lemma}[Callan]
\label{lem:path-interpretn-trinomial-diff}
Let $\ell$ be a positive integer and let $k \leq \ell$ be a 
non negative integer.  The number of Generalized Riordan paths 
from $(0,0)$ to $(\ell,k)$  
equals $p_{\ell,\ell-k} - p_{\ell,\ell-k-1}$.
That is,
$\alpha_{2\ell,k,\ell}=|\UHD(\ell,\ell-k)|-|\UHD(\ell,\ell-k+1)|$.
Thus, $\alpha_{2\ell,k,\ell}$ equals the number of
Generalized Riordan paths from $(0,0)$ to $(\ell,\ell-k)$.
\end{lemma}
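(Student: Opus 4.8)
The plan is to set up an explicit bijection, or rather a signed/cancellation argument, realizing $p_{\ell,\ell-k} - p_{\ell,\ell-k-1}$ as the count of Generalized Riordan paths from $(0,0)$ to $(\ell,k)$. First I would recall that $p_{\ell,\ell-k} = |\UHD(\ell,k)|$ (by the translation identity $p_{\ell,\ell+k}=q_{\ell,k}$ together with palindromy $p_{\ell,j}=p_{\ell,2\ell-j}$), so the claim is equivalent to
\begin{equation*}
|\UHD(\ell,k)| - |\UHD(\ell,k+1)| = \#\{\text{Generalized Riordan paths } (0,0)\to(\ell,k)\}.
\end{equation*}
The right-hand set consists of non-negative UHD paths ending at height $k$ with no $H$-step at height $0$; call this set $\mathrm{GR}(\ell,k)$. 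So I want an injection $\Phi$ from $\UHD(\ell,k+1)$ into $\UHD(\ell,k)$ whose image is exactly the complement of $\mathrm{GR}(\ell,k)$ inside $\UHD(\ell,k)$ — i.e. the set of paths ending at height $k$ that either dip below the $x$-axis at some point, or have an $H$-step at height $0$.

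The standard device here is a reflection/first-passage decomposition. Given a UHD path $P$ to $(\ell,k+1)$, look for structure that lets me "lower" the endpoint by one while simultaneously creating a forbidden feature. One natural approach: any UHD path to height $k+1\ge 1$ must at some point make a final $U$-step from height $k$ to height $k+1$ which is never undone; more useful is to track the path's relation to a shifted axis. I would reflect the portion of $P$ up to the last visit to height $0$ (or to height $-1$, using a Cycle-Lemma / Dvoretzky–Motzkin style argument) across that level, turning a path that stays $\ge 0$ and ends at $k+1$ into a path that goes negative (or acquires a ground-level $H$) and ends at $k$, and argue this is reversible on the target complement. The bookkeeping is: $\UHD(\ell,k)$ splits as $\mathrm{GR}(\ell,k)$ (good paths) disjoint union (paths hitting height $<0$) disjoint union (non-negative paths with an $H$ at height $0$), and the bijection should pair $\UHD(\ell,k+1)$ with the last two pieces combined.

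The step I expect to be the main obstacle is getting the bijection to simultaneously account for \emph{both} forbidden features — dipping below zero \emph{and} having a ground-level $H$-step — in one clean map, since these are genuinely different obstructions and a naive reflection only produces one of them. A cleaner route, and the one I would actually pursue, is to avoid this by a two-step reduction: first use Lemma~\ref{lem:ank:rec:1} (or directly the recurrence $p_{\ell,k}=p_{\ell-1,k}+p_{\ell-1,k-1}+p_{\ell-1,k-2}$) to check that $\last_{\ell,k}:=p_{\ell,\ell-k}-p_{\ell,\ell-k-1}$ satisfies the \emph{same} recurrence $\last_{\ell,k}=\last_{\ell-1,k-1}+\last_{\ell-1,k}+\last_{\ell-1,k+1}$ that $|\mathrm{GR}(\ell,k)|$ satisfies (peeling the last step of a Generalized Riordan path: it is $D$ from height $k+1$, or $H$ from height $k\ge 1$, or $U$ from height $k-1$, with the ground-level-$H$ restriction automatically preserved since the shortened path still avoids $H$ at $0$), and then verify the base cases $\ell\le 2$ directly against Table~\ref{tab:last_l,k}. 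Both sequences being non-negative, satisfying the same three-term recurrence, and agreeing on initial values forces equality. I would write the proof this way — recurrence plus base case — and only remark that the bijective reflection argument of Callan gives the more conceptual explanation. The final sentence of the statement then follows immediately since, by palindromy, a Generalized Riordan path to $(\ell,k)$ and one to $(\ell,\ell-k)$ are counted by the same number $\last_{\ell,k}=\last_{\ell,\ell-k}$... actually more simply: $\alpha_{2\ell,k,\ell}=p_{\ell,k}-p_{\ell,k-1}=p_{\ell,\ell-(\ell-k)}-p_{\ell,\ell-(\ell-k)-1}$, which by the displayed equivalence is exactly the number of Generalized Riordan paths from $(0,0)$ to $(\ell,\ell-k)$.
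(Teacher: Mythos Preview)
Your recurrence-matching approach is correct and genuinely different from the paper's. The paper proves the lemma by constructing an explicit bijection $f:\UHD(\ell,\ell-k)\setminus\RTP(\ell,\ell-k)\to\UHD(\ell,\ell-k+1)$: given a ``bad'' path $P$, it locates the maximal Generalized Riordan suffix $R$, looks at the preceding step $X\in\{H,U\}$, writes $P=SXR$, and sends this to $\overline{S}\,X'\,R^{+1}$ where $\overline{S}$ is $S$ reflected across the $x$-axis, $X'$ swaps $H\leftrightarrow U$, and $R^{+1}$ is $R$ shifted up one unit. This single flip-and-swap handles \emph{both} forbidden features (sub-zero dips and ground-level $H$'s) at once, which is exactly the obstacle you flagged. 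Your route sidesteps that cleverness entirely: both $|\mathrm{GR}(\ell,k)|$ and your $p_{\ell,\ell-k}-p_{\ell,\ell-k-1}$ satisfy the same three-term recurrence in $\ell$ (for $k\ge 1$), and matching initial data finishes it. Your argument is shorter and more mechanical; the paper's buys an explicit pairing that one might reuse elsewhere. Two small cautions: first, your passing claim that $\last_{\ell,k}=\last_{\ell,\ell-k}$ by palindromy is false (e.g.\ $\ell=3$ gives $3\ne 2$), though your ``actually more simply'' rewrite is the correct deduction of the final sentence. Second, at $k=0$ the Generalized Riordan recurrence degenerates to a single term $|\mathrm{GR}(\ell,0)|=|\mathrm{GR}(\ell-1,1)|$, so you should check separately that $p_{\ell,\ell}-p_{\ell,\ell-1}=p_{\ell-1,\ell-2}-p_{\ell-1,\ell-3}$; this follows from palindromy of the trinomial row, but deserves a line.
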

\begin{proof}
For $0 \leq k \leq \ell$
let $\RTP(\ell,\ell-k)$ denote the set of Generalized Riordan paths
from $(0,0)$ to $(\ell,\ell-k)$ without a horizontal step at height 
zero. We will prove the Lemma by giving a bijection $f$ from
the set $\UHD(\ell,\ell-k)/\RTP(\ell,\ell-k)$ 
%$\frac{U_{\ell,\ell-k}}{S_{\ell,\ell-k}}$ 
to the set $\UHD(\ell,\ell-k+1)$.
		
	Suppose $P\in\UHD(\ell,\ell-k)$ has either a $H$ step at 
ground level or dips strictly below the $x$ axis at some point or both.  
Denote by $R$ the subpath of $P$ starting from the $x$-axis 
after either the last horizontal step at height 0, or after the last 
time $P$ went below the $x$-axis, (if both events happen, choose whichever 
event happens later).   Thus, 
$R$ is the longest Generalized Riordan sub-path that starts somewhere 
on the $x$-axis and ends $P$.   Consider the step $X$ in $P$ that precedes 
$R$.   It is easy to check that $X$ cannot be $D$. Thus, we have 
two cases based on $X$.
		
{\bf Case 1 (when $X = H$):}  
We have $P=SXR$ for some sub-path $S$ of $P$ ending somewhere on the 
$x$-axis. Define $f(P)$ as follows: $f(SXR)= \overline{S}UR^{+1},$
where  $R^{+1}$  is sub-path obtained by shifting $R$ from the  
ground level to level 1 and $\overline{S}$ is obtained by flipping 
$S$ with respect to the $x$ axis.
		
{\bf Case 2 (when $X=U$):}  We have
$P=SXR$ for some sub-path $S$ of $P$ ending at height $-1$. 
Define  $f(SUR)=\overline{S}HR^{+1}$, where $R^{+1}$ and 
$\overline{S}$ are as defined in Case 1.
		
We note that any path with first step $U$ gets mapped under $f$ to a path 
with first step $D$ and vice-versa.  The map $f$ sends paths whose 
first step is $H$ to paths with first step $H$ itself. 

{\bf Inverse map $f^{-1}$: }  To defined the inverse of $f$, let 
$P\in \UHD(\ell,\ell-k+1)$ be a path from $(0,0)$ to $(\ell,\ell-k+1)$. 
Let $R$ be the largest subpath of $P$ that ends $P$ and does not have  a
$H$ step at level $1$ or goes below level $1$.  
As before, let $X$ be the step in $P$ that precedes $R$.  Note that
$X$ cannot be $D$.  Thus we have the following two cases. 
		
{\bf Case 1 (when $X=H$): } We have $P=SHR$ for some subpath $S$.  
Define $f^{-1}(SHR)= \overline{S}UR^{-1},$ where 
$R^{-1}$ is sub-path obtained by shifting $R$ from the level $1$ 
to ground level and $\overline{S}$ is as defined as in the definition
of $f$.
	
{\bf Case 2 (when $X=U$): } Whe have $P=SUR$ for some subpath $S$. 
Define $f^{-1}(SUR)= \overline{S}HR^{-1}.$
		
It is easy to check that $f \odot f^{-1}= \id$, 
the identity map. The proof is complete. 
\end{proof}

\begin{remark}
The bijection $f$ defined in the proof of Lemma
\ref{lem:path-interpretn-trinomial-diff}
 is illustrated in Figure \ref{fig:Callan_proof} 
where $f(UDDUUUUH) = DUUHUUUH$  and 
$f^{-1}(UDDHDUUU) = DUUHUDDH$.
\end{remark}

\begin{figure}[h]
\centering
\includegraphics[scale=0.6]{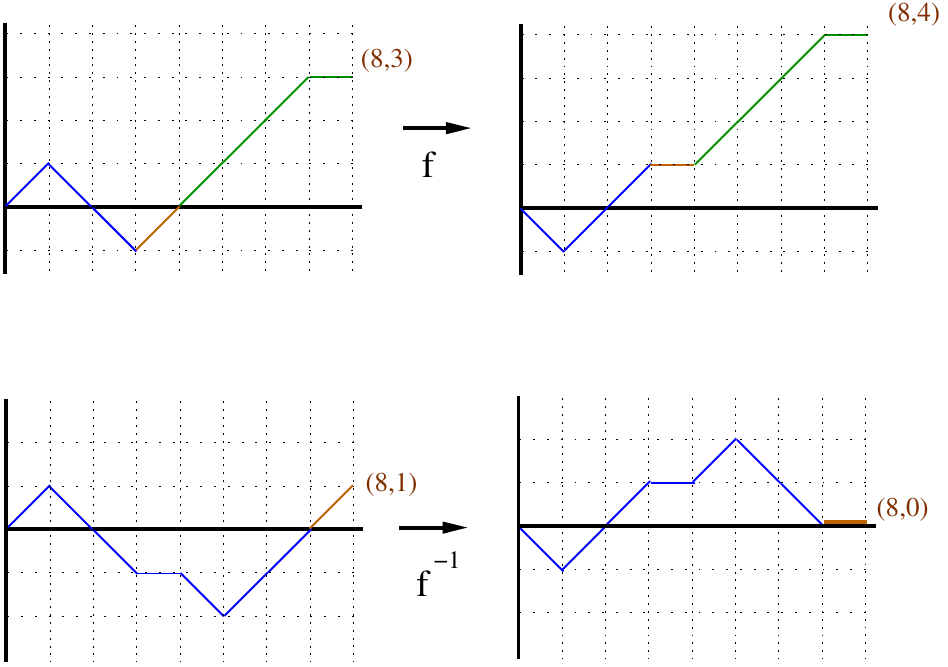}
\caption{Examples of the bijection $f$ and its inverse $f^{-1}.$}
\label{fig:Callan_proof}
\end{figure}

\begin{example}
\label{eg:illustrate_paths}
We illustrate Lemma \ref{lem:path-interpretn-trinomial-diff}
when $\ell=4$.  We clearly have $p_{4,-1}=0$, $p_{4,0} = 1$, $p_{4,1} = 4$, 
$p_{4,2} = 10$, $p_{4,3} = 16$ and $p_{4,4} = 19$.  From Example 
\ref{eg:recursive-from-prev}, we have the following table of 
$\alpha_{8,4,k}$.  Clearly, $\alpha_{8,4,k} = p_{4,k} - p_{4,k-1}$ and
we have the following sets of Generalized Riordan paths.
%All lattice paths start at the origin $(0,0)$ and are non-negative.

$ \begin{array}{|r|c|c|c|c|c|c|}  \hline
  & \lambda = 8 & \lambda = 7,1 & \lambda = 6,2 & \lambda = 5,3 & \lambda=4,4\\ \hline
  i=4 & 1 & 3 & 6 & 6 & 3 \\ \hline
\mbox{End point} & (4,4) & (4,3) & (4,2) & (4,1) & (4,0) \\ \hline
\mbox{Path sets} & \RTP(4,4) & \RTP(4,3) &  \RTP(4,2) & \RTP(4,1)&  \RTP(4,0) \\  \hline
\end{array}$

\vspace{2 mm}

where
$\RTP(4,4) = \{UUUU \}$, $\RTP(4,3) = \{UUUH, UUHU, UHUU \}$,

$\RTP(4,2) = \{UUHH,UHHU, UHUH, UUUD, UUDU, UDUU\}$,

$\RTP(4,1) = \{ UDUH, UUDH, UHDU, UHUD, UUHD, UHHH\}$ and 

$\RTP(4,0) = \{UDUD, UUDD, UHHD \}$.  The set of 
paths in $\RTP(4,2)$ are drawn in Figure \ref{fig:riordan4_2}.
%$\alpha_{8,4,0} = 1$,
%$\alpha_{8,4,1} = 3$, $\alpha_{8,4,2} = 6$, $\alpha_{8,4,3} = 6$
%and $\alpha_{8,4,4} = 3$ corresponding to the paths given below:
%$R_{4,4} = \{UUUU\}$, $
%$R_{4,0} = \{ \}$.
\end{example}
 
\begin{figure}[h]
\centerline{\includegraphics[scale=0.65]{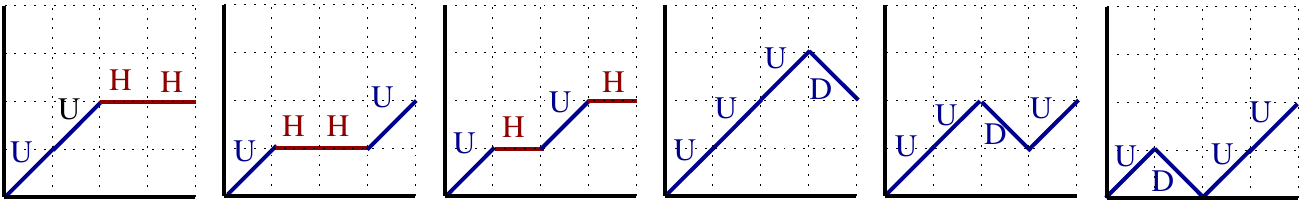}}
\caption{The set $\RTP(4,2)$ of Generalized Riordan paths from $(0,0)$ to $(4,2)$.}
\label{fig:riordan4_2}
\end{figure}

In our next lemma, we interpret Generalized Riordan paths as 
generalized Dyck paths with restrictions on the positions of 
its peaks.  We denote paths with only Up and Down steps as UD paths.
The following 
interpretation of Riordan paths is known (see OEIS) and we give a 
simple proof as we need a version for Generalized Riordan paths as well.  
Given a UD path $P$, a peak is a lattice point $(p,q)$ on $P$ such that
an up-step ends at $(p,q)$ and a down-step starts at $(p,q)$.

\begin{lemma}
\label{lem:riordan-dyck}
Let $n = 2\ell$.  For $0 \leq k \leq \ell$, there is a 
bijection $f$ from $\RTP(\ell,\ell-k)$ to the set 
$\NLP(2\ell,2\ell-2k)$ of generalized Dyck paths from 
$(0,0)$ to $(2\ell,2\ell-2k)$ that have 
$2\ell-k$ Up steps, $k$ Down steps and have no peaks at any
odd height.  Thus, $\alpha_{2\ell, k, \ell}$ is the number of 
generalized Dyck paths from $(0,0)$ to $(2\ell, 2\ell -2k)$
with $2\ell-k$ Up steps, $k$ Down steps, that have no peaks at 
any odd height.
\end{lemma}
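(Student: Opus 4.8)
The plan is to construct an explicit bijection $f$ from the set $\RTP(\ell,\ell-k)$ of Generalized Riordan paths (non-negative UHD paths from $(0,0)$ to $(\ell,\ell-k)$ with no horizontal step at height $0$) to the set of generalized Dyck paths in $\NLP(2\ell,2\ell-2k)$ having no peak at an odd height. The natural idea is to "resolve" each of the three UHD step types into a pair of UD steps: an Up step $U$ becomes $UU$, a Down step $D$ becomes $DD$, and a horizontal step $H$ becomes $UD$. This doubles the horizontal extent ($\ell \mapsto 2\ell$) and keeps the terminal height ($\ell-k \mapsto 2\ell - 2k$), and an $H$-step from height $h$ to height $h$ becomes a peak at height $h+1$. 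So the conditions to verify are: (i) the image is a non-negative UD path, (ii) it has exactly $2\ell-k$ Up and $k$ Down steps, (iii) its peaks occur exactly at even heights, and (iv) $f$ is invertible with the stated domain and codomain.

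**First I would check the forward direction.** Non-negativity of the image follows because each UHD step from height $h \ge 0$ to height $h' \ge 0$ gets replaced by a length-$2$ UD detour whose intermediate height is $\min(h,h')$ for $U,D$ and $h+1$ for $H$, all $\ge 0$; and a UHD path touching height $0$ can only do so via $U$ or $D$ steps there (no $H$ at height $0$, by the Riordan condition), so the doubled path stays $\ge 0$. The step count is immediate: if the UHD path has $u$ Up, $d$ Down, $g$ horizontal steps with $u+d+g=\ell$ and $u-d = \ell-k$, then the image has $2u+g$ Ups and $2d+g$ Downs, and $2u+g = u + (u + g) = u + (\ell - d) $; combined with $u - d = \ell-k$ one gets $2u+g = 2\ell - k$ and $2d+g = k$, as required. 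For the peak condition: in the image, a peak (a $U$ immediately followed by a $D$) can only come from (a) the two middle steps $U,D$ of the block encoding an $H$-step, which sits at height $(\text{height of }H)+1$, or (b) a boundary between consecutive blocks where the first block ends in $U$ and the next begins with $D$ — i.e. a $U$-block ($UU$) followed by a $D$-block ($DD$) or by an $H$-block ($UD$). In case (a) the height is $h+1$ where $h$ is the (integer) height of the UHD path, which is even or odd — wait, here is the subtlety I must handle carefully.

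**The main obstacle** is pinning down precisely why peaks land at even heights and only at even heights, which forces a parity bookkeeping on the heights visited by the UHD path. The key observation is that after $j$ doubled blocks the image is at height $2 \cdot(\text{height of UHD path after }j\text{ steps})$, an even number, and \emph{within} block $j+1$ the image passes through heights differing from this even base by $0$ or $\pm 1$. An $H$-block contributes a peak at (even base)$+1$, an odd height — so this must be \emph{excluded}, meaning the claimed bijection cannot simply double every step; rather, the correct target must be peaks at odd heights, matching the lemma's wording ("no peaks at any odd height" in the \emph{image}... ) — so I must re-read: the lemma says the generalized Dyck paths have \emph{no peaks at odd height}. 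Hence the encoding $H \mapsto UD$ is wrong and instead I should use $H \mapsto DU$ (a valley, not a peak), which introduces no peak; then peaks arise only at block boundaries of the form $UU$ followed by $DD$, occurring at height (even base of the boundary)$+1$... still odd. The resolution is the \emph{opposite} parity convention: encode the UHD path so that heights double to \emph{odd} multiples, e.g. shift by starting the first UD step appropriately, or equivalently encode $U\mapsto UU$, $D \mapsto DD$, $H \mapsto DU$, but read peak-heights relative to a base shifted by the Riordan no-$H$-at-$0$ condition. I would work out the exact encoding on the $\ell=4$, $k=2$ example from Figure~\ref{fig:riordan4_2} to fix signs and parity, then verify (i)–(iv) in general, and finally exhibit the inverse: given a non-negative UD path with no odd-height peak, group its steps into consecutive pairs starting from the origin (possible since every prefix of even length returns to even height precisely because peaks — the only obstruction to pairing $UU/DD/UD/DU$ cleanly — are forbidden at the offending heights), and read off $UU\mapsto U$, $DD \mapsto D$, $DU \mapsto H$ (and show $UD$ cannot occur at the relevant positions). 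Checking $f \circ f^{-1} = \id$ and the Riordan no-$H$-at-ground condition on the preimage is then routine.

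I would conclude: since $f$ is a bijection, $|\NLP(2\ell,2\ell-2k)\text{ with no odd-height peaks}| = |\RTP(\ell,\ell-k)| = \alpha_{2\ell,k,\ell}$ by Lemma~\ref{lem:path-interpretn-trinomial-diff}, completing the proof.
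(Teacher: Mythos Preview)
Your eventual encoding $U\mapsto UU$, $D\mapsto DD$, $H\mapsto DU$ is exactly the one the paper uses, and your description of the inverse (pair off steps and read $UU\mapsto U$, $DD\mapsto D$, $DU\mapsto H$, with $UD$ impossible in an even-indexed pair) is also correct. So the approach is the same.

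The gap is in your parity bookkeeping. After $j$ blocks the doubled path sits at position $2j$ and height exactly $2h_j$, where $h_j$ is the height of the UHD path after $j$ steps. A peak in the image is a $U$ followed immediately by a $D$; with the blocks $UU$, $DD$, $DU$ this can never happen \emph{inside} a block, so any peak must sit at a block boundary, i.e.\ at position $2j$ and height $2h_j$. That height is even, not ``(even base)${}+1$''. Your mistake was to add $1$ to the boundary height as if the peak were reached one step past the boundary; in fact the $U$ ending the left block already lands you at $2h_j$, and the $D$ beginning the right block takes you down from there. (Incidentally, the peak-producing boundaries are not only $UU\mid DD$: any left block ending in $U$---so $UU$ or $DU$---followed by any right block beginning with $D$---so $DD$ or $DU$---works, and all of them sit at the same even height $2h_j$.) Once you correct this, no ``parity shift'' or alternate convention is needed: the map works as stated, the Riordan no-$H$-at-height-$0$ condition is precisely what keeps the $DU$ block from dipping below the axis (since an $H$-step at height $h\ge 1$ becomes a $DU$ reaching down to $2h-1\ge 1$), and the rest of your outline goes through.
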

\begin{proof}
Let $P \in \RTP(\ell,\ell-k)$ with $P = a_1,a_2,\ldots,a_{\ell}$ be a 
Generalized Riordan path from $(0,0)$ to
$(\ell, \ell-k)$ where $a_i = U/H/D$, 
depending on the type of the
$i$-th step of $P$.  Perform the following operations: 
change $U$ to $U,U$, change $D$ to $D,D$ and change $H$ to $D,U$.  
This will convert $P$ to 
$f(P) = Q = b_1, b_2, \ldots, b_{2\ell}$ where $Q$ is an UD 
path.  We note the following properties of the bijection $f$.

{\bf (Property 1) $Q$ is a non negative path : } As 
$P \in \RTP(\ell, \ell-k)$ %is a non negative Riordan type path which 
and thus has no horizontal steps 
at height 0.  Thus, changing a $H$ step in $P$ to $D,U$ in $f(P)$ will
not make the path $f(P)$ go below height 0.  Further, since $P$ 
is non negative, any $D$ step in $P$ is preceded by a $U$ step prior
to it.  This ensures that while changing $D$ in $P$ to $D,D$ in $f(P)$ 
we would have earlier changed a $U$ in $P$ to a $U,U$ in $f(P)$ 
and hence this change will also not make $f(P)$ go below height 0.

{\bf (Property 2) $Q$ has no peaks at any odd height :} To see
this, note that a peak will occur in $f(P)$ iff there is a consecutive 
$U,D$ pair.  Suppose $(b_i,b_{i+1}) = (U,D)$, then it is easy to 
see that $i$ is even.  As $i$ is even, this means that the height 
at which the peak occurs in $f(P)$ is also even. Thus any peak
of $f(P)$ only occurs at an even height.
The proof is now complete.
\end{proof}

\begin{figure}[h]
\centerline{\includegraphics[scale=0.55]{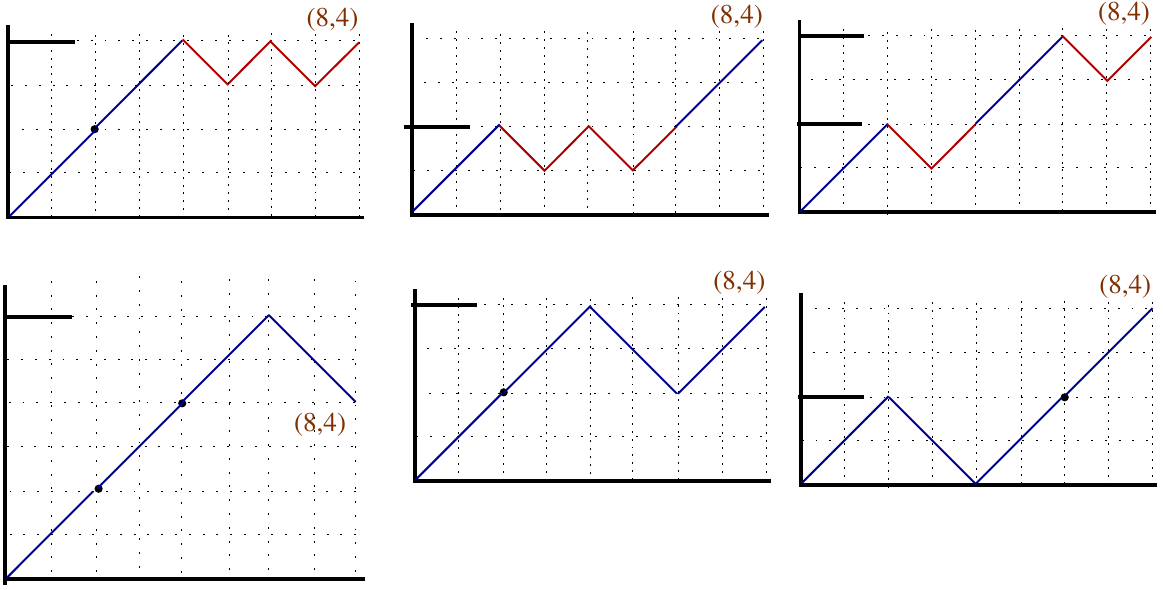}}
\caption{The generalized Dyck paths obtained under the 
bijection $f$ applied to paths in $\RTP(4,2)$.}
\label{fig:riordan4_2-up-down}
\end{figure}

Figure \ref{fig:riordan4_2-up-down} shows the generalized Dyck paths
output by the bijection $f$ described in Lemma \ref{lem:riordan-dyck}
on paths $P \in \RTP(4,2)$.
Note that Lemma \ref{lem:riordan-dyck} gives an interpretation for 
$\alpha_{2\ell, k, \ell}$, that is for entries in the last 
row when $n = 2\ell$.  Using this as a building block, we give another
expression for $\alpha_{2\ell, k, i}$ in terms of $\alpha_{2m,k,m}$.
This will enable us to give 
an interpretation for $\alpha_{2\ell, k, i}$.

\begin{lemma}
\label{lem:binomial-riordan}
Let $n = 2\ell$ and let $0 \leq k,i \leq \ell$.  Then, 
$$\alpha_{2\ell, k, i} = 
\sum_{t=0}^{\ell-i} \binom{\ell-i}{t} \alpha_{2\ell-2t,k-t,\ell-t}.$$
\end{lemma}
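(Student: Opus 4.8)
The plan is to reduce this identity to a short polynomial computation resting on the observation that $(1+x)^2 = (1+x+x^2) + x$. First I would recall that for $n = 2\ell$ and $0 \le i \le \ell$ one has $\alpha_{2\ell,k,i} = p_{2\ell,i,k} - p_{2\ell,i,k-1}$, where $p_{2\ell,i}(x) = (1+x)^{2\ell-2i}(1+x+x^2)^i = \sum_{k} p_{2\ell,i,k}\,x^k$: for $i \geq 1$ this is part~(2) of Lemma~\ref{lem:gen_poly} with $s=0$, and for $i=0$ it is just the dimension formula of Remark~\ref{rem:dim_i-zero-row} (since $p_{2\ell,0,k} = \binom{2\ell}{k}$). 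So everything comes down to understanding the coefficients of the polynomial $p_{2\ell,i}(x)$.

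Next I would expand $(1+x)^{2\ell-2i} = \big((1+x+x^2)+x\big)^{\ell-i}$ by the binomial theorem, obtaining $(1+x)^{2\ell-2i} = \sum_{t=0}^{\ell-i}\binom{\ell-i}{t}x^t(1+x+x^2)^{\ell-i-t}$, and then multiply both sides by $(1+x+x^2)^i$ to get $p_{2\ell,i}(x) = \sum_{t=0}^{\ell-i}\binom{\ell-i}{t}x^t(1+x+x^2)^{\ell-t}$. Extracting the coefficient of $x^k$, and writing $p_{m,j}$ for the coefficient of $x^j$ in $(1+x+x^2)^m$ as in Lemma~\ref{lem:ank:rec:1}, this yields $p_{2\ell,i,k} = \sum_{t=0}^{\ell-i}\binom{\ell-i}{t}\,p_{\ell-t,\,k-t}$, where the convention $p_{m,j}=0$ for $j<0$ absorbs the terms with $t>k$.

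Finally, since the map $c_k \mapsto c_k - c_{k-1}$ is linear, forming the successive difference gives $\alpha_{2\ell,k,i} = \sum_{t=0}^{\ell-i}\binom{\ell-i}{t}\big(p_{\ell-t,\,k-t}-p_{\ell-t,\,k-1-t}\big)$, and by part~(3) of Lemma~\ref{lem:ank:rec:1} each bracket equals $\last_{\ell-t,\,k-t} = \alpha_{2\ell-2t,\,k-t,\,\ell-t}$, which is exactly the claimed summand; the conventions $\last_{m,j}=0$ for $j<0$ or $j>m$ match those on $p_{m,j}$, so no boundary term is lost. I do not anticipate a genuine obstacle here: the only care needed is bookkeeping for out-of-range indices, handled uniformly by these zero conventions, and matching the range of the binomial expansion with the stated range of $t$. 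A longer alternative would be to induct on $\ell-i$ using the three-term recurrences in Lemma~\ref{lem:gen_poly} and Lemma~\ref{lem:ank:rec:1}, but the direct route through $(1+x)^2=(1+x+x^2)+x$ is cleaner and I would take it.
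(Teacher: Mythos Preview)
Your proof is correct and is essentially the same as the paper's: both rest on the identity $(1+x)^2=(1+x+x^2)+x$, expand $p_{2\ell,i}(x)=\big((1+x+x^2)+x\big)^{\ell-i}(1+x+x^2)^i$ binomially, and then apply the linear ``successive difference'' operator termwise. Your treatment is in fact slightly more careful than the paper's in separately handling the $i=0$ case (since Lemma~\ref{lem:gen_poly} is stated only for $i\ge 1$).
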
 
\begin{proof}
By Lemma \ref{lem:gen_poly}, $\alpha_{2\ell, k,i}$ is the 
difference of succesive coefficients from the polynomial
$p_{2\ell, i}(x)$.  Set $b = 1+x+x^2$.  Then,
for $k \geq 0$, we clearly have $p_{2k,k}(x) = b^k$.  It is 
further clear that 
\begin{eqnarray*}
p_{2\ell, i}(x) & = & (x+b)^{\ell-i} p_{2i,i}(x) = (x+b)^{\ell-i}b^i\\
 & = & 
\sum_{t=0}^{\ell-i} \binom{\ell-i}{t}x^t b^{\ell-t}
= 
\sum_{t=0}^{\ell-i} \binom{\ell-i}{t}x^t p_{2\ell-2t,\ell-t}(x)
\end{eqnarray*}
As taking the difference of successive coefficients 
is a linear operator, we get the desired equation, completing
the proof.
\end{proof}

\begin{example}
We illustrate Lemma \ref{lem:binomial-riordan} by getting
the last column of the table when $n=8$. The following
data can be easily verified. 
\vspace{2 mm}

\begin{tabular}{|l|r|r|r|r|r|} \hline
$\ell$ & 0 & 1 & 2 & 3 & 4 \\ \hline
$\alpha_{2\ell,\ell,\ell}$ & 1 &  0 &  1 &  1 &  3 \\ \hline
\end{tabular}

\vspace{2 mm}

From the table for $n=8$ in Example \ref{eg:recursive-from-prev}, 
one can easily verify the construction of the entries in the last column 
using the elements $\alpha_{2\ell,\ell,\ell}$ as
follows.
\begin{tabbing}
asdsaa \= as \= asa \= as \=  \kill
$\alpha_{8,4,3}$ \> $=$ \> $4$ \> $=$ \> $\alpha_{8,4,4} + \alpha_{6,3,3},$\\
$\alpha_{8,4,2}$ \> $=$ \> $6$ \> $=$ \> $\alpha_{8,4,4} + 2 \alpha_{6,3,3} + \alpha_{4,2,2},$\\
$\alpha_{8,4,1}$ \> $=$ \> $9$ \> $=$ \> $\alpha_{8,4,4} + 3 \alpha_{6,3,3} + 3 
\alpha_{4,2,2} + \alpha_{2,1,1},$\\
$\alpha_{8,4,0}$ \> $=$ \> $14$ \> $=$ \> $\alpha_{8,4,4} + 4 \alpha_{6,3,3} + 6 
\alpha_{4,2,2} + 4 \alpha_{2,1,1} + \alpha_{0,0,0}.$\\
\end{tabbing}
\end{example}

\vspace{-4 mm}

Using Lemma \ref{lem:binomial-riordan},  we give an 
interpretation for the numbers 
$\alpha_{2\ell,k,i}$  when $i < \ell$.  It will again be the 
the cardinality of a set of generalized Dyck paths with odd peaks occurring
at restricted positions.  All our generalized Dyck paths will be from 
$(0,0)$ to $(2\ell, 2\ell-2k)$.  Divide the $2\ell$ steps on the 
$x$-axis into $\ell$ intervals of length 2 each.  Thus, we have 
intervals $s_1 = (0,2)$, $s_2 = (2,4), \ldots, 
s_{\ell} = (2\ell-2, 2\ell)$.   
For $0 \leq i \leq \ell$, define the sets 
$D_i = \{1,2,\ldots,i\}$.  Thus $D_0 = \emptyset$, $D_1 = \{1\},
D_2 = \{1,2\}$ and so on.  We will permit peaks to have an 
odd height at a point $(x,y)$ where $x \in D_i$.

\begin{lemma}
\label{lem:convol_binom_type}
With the notation described above, $\alpha_{2\ell, k, i}$ is
the cardinality of the set 
of generalized Dyck paths from $(0,0)$ to $(2\ell, 2\ell-2k)$ with 
$2\ell-k$ Up steps, $k$ Down steps and odd
peaks contained in the set $D_{\ell-i}$.
\end{lemma}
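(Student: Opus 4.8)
The plan is to induct on $\ell - i$, the argument combining a two-term recurrence extracted from Lemma~\ref{lem:binomial-riordan} with a deletion/insertion bijection on the relevant paths. First I would record the recurrence. For $i<\ell$, applying Pascal's identity $\binom{\ell-i}{t}=\binom{\ell-i-1}{t}+\binom{\ell-i-1}{t-1}$ inside the sum of Lemma~\ref{lem:binomial-riordan} and re-indexing the second piece (shift $t\mapsto t+1$, then read it back through Lemma~\ref{lem:binomial-riordan} applied to $(\ell-1,k-1,i)$) yields
\[
\alpha_{2\ell,k,i}\;=\;\alpha_{2\ell,k,i+1}\;+\;\alpha_{2(\ell-1),\,k-1,\,i}.
\]
On the right, the quantity ``$\ell-i$'' has dropped by one in each term, which is what the induction consumes.

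Let $\mathcal{P}(2\ell,k,i)$ denote the set of generalized Dyck paths from $(0,0)$ to $(2\ell,2\ell-2k)$ with $2\ell-k$ up steps, $k$ down steps and every odd-height peak lying in one of the intervals $s_1,\dots,s_{\ell-i}$; I must show $|\mathcal{P}(2\ell,k,i)|=\alpha_{2\ell,k,i}$. The base case $i=\ell$ is exactly Lemma~\ref{lem:riordan-dyck}, since then $D_0=\emptyset$ forbids odd peaks altogether. For $i<\ell$, split $\mathcal{P}(2\ell,k,i)$ according to whether the interval $s_{\ell-i}$ contains an odd peak. A path with no odd peak in $s_{\ell-i}$ has all its odd peaks in $s_1,\dots,s_{\ell-i-1}$, hence lies in $\mathcal{P}(2\ell,k,i+1)$ (and conversely), contributing $\alpha_{2\ell,k,i+1}$ by induction. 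For a path with an odd peak in $s_{\ell-i}$, that interval's two steps must be an up step then a down step; deleting this $UD$ pair gives a path of length $2(\ell-1)$ ending at the same height. I would check that this deletion is a bijection onto $\mathcal{P}(2(\ell-1),k-1,i)$, the inverse re-inserting a $UD$ pair as the new interval $s_{\ell-i}$ (i.e.\ right after the first $2(\ell-i-1)$ steps); by induction this class has size $\alpha_{2(\ell-1),k-1,i}$. Adding the two cases and using the recurrence gives $|\mathcal{P}(2\ell,k,i)|=\alpha_{2\ell,k,i+1}+\alpha_{2(\ell-1),k-1,i}=\alpha_{2\ell,k,i}$.

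The one place needing genuine care, and the part I would write out in full, is the verification that the $UD$-deletion and its inverse insertion keep the path inside the correct class: they preserve non-negativity (a $UD$ bump sits weakly above the ambient height, so removing or adding it cannot create a point below the axis), and they neither destroy a permitted odd peak nor create a forbidden one. For the latter, note that the single new adjacency produced by a deletion sits at an \emph{even} $x$-coordinate, so any new peak there is an even peak and is always allowed, while every other step is merely translated by $2$, preserving the parity of its position; since a peak has odd height precisely when its $x$-coordinate is odd, the set of odd peaks is controlled exactly as claimed. The rest is routine bookkeeping, and the analogous checks for the insertion map are identical.
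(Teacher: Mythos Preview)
Your proof is correct and takes a somewhat different route from the paper's. The paper argues directly: it groups the paths in $\mathcal{P}(2\ell,k,i)$ by the exact number $t$ of odd peaks they carry, shows that deleting all $t$ of the corresponding $UD$ pairs at once gives a bijection to pairs consisting of a $t$-subset of $D_{\ell-i}$ together with a path of length $2\ell-2t$ having no odd peaks, and then reads off the count $\binom{\ell-i}{t}\alpha_{2\ell-2t,k-t,\ell-t}$ and invokes the summation form of Lemma~\ref{lem:binomial-riordan}. You instead peel off one interval at a time: you extract the two-term Pascal recurrence $\alpha_{2\ell,k,i}=\alpha_{2\ell,k,i+1}+\alpha_{2(\ell-1),k-1,i}$ from Lemma~\ref{lem:binomial-riordan} and match it bijectively by casing on whether the single interval $s_{\ell-i}$ carries an odd peak. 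The two approaches are morally the same---yours is the inductive unrolling of the paper's one-shot decomposition---but your version is a bit more explicit about why deletion and insertion preserve nonnegativity and respect the odd-peak constraint (via the parity-of-$x$ argument), while the paper's version has the advantage of exhibiting the full binomial-sum structure in a single step.
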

\begin{proof}
Our proof is inspired by the proof of Lemma 
\ref{lem:binomial-riordan}.  We construct generalized Dyck 
paths with peaks at odd height in the set $D_{\ell-i}$ as follows.  
If there are peaks at odd heights, then as done in the proof 
of Lemma \ref{lem:riordan-dyck}, it is clear that any such peak
will occur at position $(x,y)$ where both $x,y$ are odd positive
integers.  Thus, such an odd peak  causing ``$U,D$" pair of 
steps has to be 
in positions indexed by $s_d$ for some $d \in \{1,2,\ldots,\ell\}$.

We claim that the number of generalized Dyck paths with $t$ odd height peaks 
in the set $D_{\ell-i}$ is $\binom{\ell-i}{t} \alpha_{2\ell-2t,k-t,\ell-t}$.
If such a path $P$ is written as a string of $U,D$'s, any peak 
will have a 
consecutive ``$U,D$" substring.  Note that if $P$ has $t$ 
peaks at odd heights, then removing the $t$ ``$U,D$" pairs
will give a generalized Dyck path $Q$ of length $2\ell-2t$ with 
no change in the final height of the path (thus having 
$2\ell-k-t$ Up and $k-t$ Down steps) and with $t$ fewer $U$ 
steps and $t$ fewer $D$ steps.  Further, $Q$ has no odd peaks.
This argument goes both ways. 

Given a generalized Dyck path $Q$ with a total of $2\ell-2t$ steps from 
$(0,0)$ to $(2\ell-2t,2\ell-2k)$
that has $2\ell-k-t$ Up steps and $k-t$ Down steps 
with no odd peaks, one can choose a subset $T$ of size $t$ 
from $D_{\ell-i}$  in $\binom{\ell-i}{t}$ ways and insert a 
``$U,D$" pair at position $s_d$ for $d \in T$.  This completes 
the proof.
\end{proof}

\begin{example}
We illustrate the bijection described in Lemma 
\ref{lem:convol_binom_type} to get $\alpha_{8,4,0}$. 
We thus need Dyck paths from $(0,0)$ to $(8,0)$.
Since we do not change the height, our building blocks are 
Dyck paths without peaks at odd heights from $(0,0)$ to $(2m,0)$ 
for non negative integers $m$.  These are given in Figure 
\ref{fig:catalan-4-base} with different colours for added
clarity.  The set of paths formed is given in Figure 
\ref{fig:catalan-4-all} where the same colours are used and 
odd peak causing ``U,D" pairs are drawn using dotted lines.  
\begin{figure}[h]
\centerline{\includegraphics[scale=0.55]{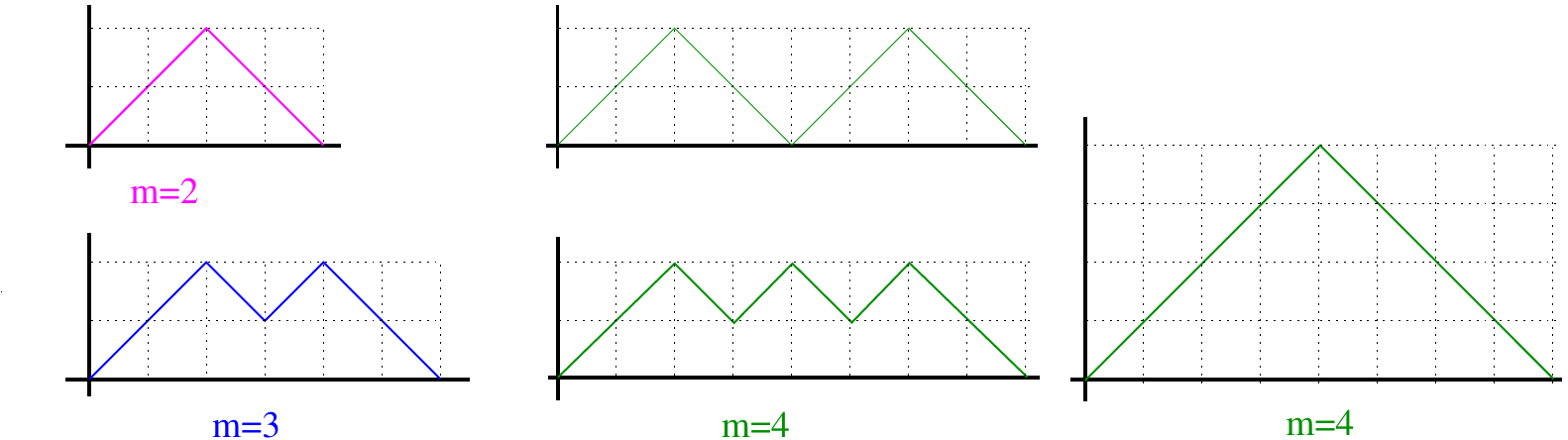}}
\caption{Up down paths with no peaks at an odd height.}
\label{fig:catalan-4-base}
\end{figure}

\begin{figure}[h]
\centerline{\includegraphics[scale=0.45]{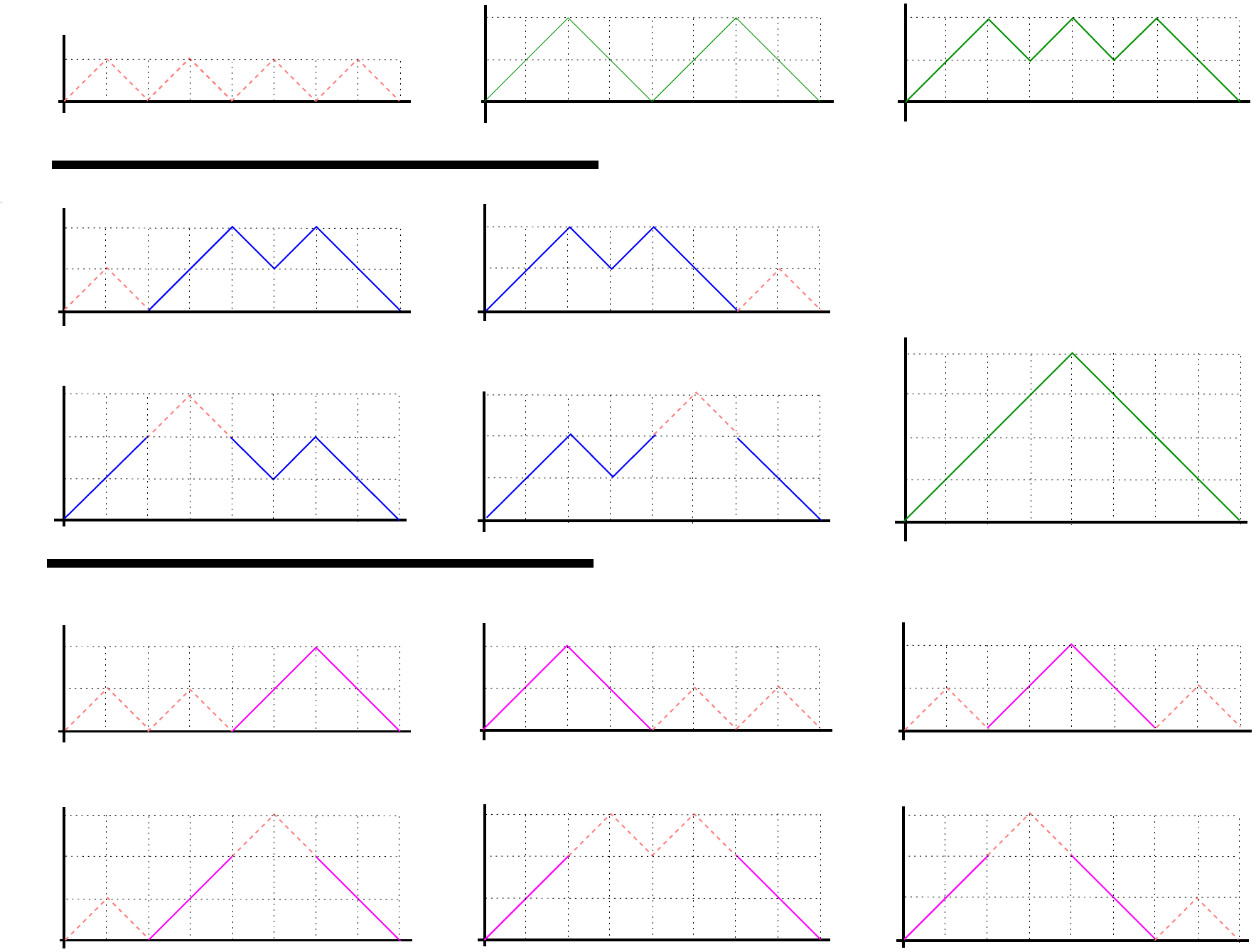}}
\caption{$\alpha_{8,4,0}$ counts all $14$ Catalan paths of semi-length 
$4$ that have peaks at odd heights in $D_4$.}
\label{fig:catalan-4-all}
\end{figure}
\end{example}

\begin{remark}
\label{rem:interpret_path_length}
Note that when $i = \ell$, 
Lemma \ref{lem:convol_binom_type} 
gives Lemma \ref{lem:riordan-dyck}.  Further, $\alpha_{2\ell,k,i}$ 
is a term that occurs in the normalized immanant computation of 
the partition $TwoRow_k$.  Thus, when $\alpha_{n,k,i}$ is viewed
as the cardinality of a restricted set of generalized Dyck paths as 
given in Lemma \ref{lem:convol_binom_type}, the second parameter 
$k$ in the subscript of $\alpha_{2\ell,k,i}$ is the number of 
Down steps while the first parameter is the total number of 
steps in the generalized Dyck path. 

Recall that $\alpha_{2\ell,\ell,0} = C_{\ell}$,
the $\ell$-th Catalan number and as 
$\alpha_{2\ell-2t,\ell-t,\ell-t} = R_{\ell-t}$, where $R_{\ell}$ is
the $\ell$-th Riordan number. When $n=2\ell, k=\ell$ and $i=0$, Lemma
\ref{lem:binomial-riordan} gives us the known fact that
$C_{\ell} = \sum_{t=0}^{\ell} \binom{\ell}{t} R_{\ell-t}$.
\end{remark}

\subsection{When $n=2\ell+1$ is odd}
When $n=2\ell+1$, we use Lemma \ref{lem:murn-naka-based-recur}
which states that $\alpha_{2\ell+1,k,i} = 
\alpha_{2\ell,k,i} + \alpha_{2\ell, k-1,i}$.  
By Remark \ref{rem:interpret_path_length},
$\alpha_{2\ell,k,i}$ and $\alpha_{2\ell, k-1,i}$ are 
the cardinalities of non negative UD paths where the first parameter
$2\ell$ is the total number of steps while the second
parameter $k$ is the number of Down steps.  Further, these have 
odd height peaks in the set $D_{\ell-i}$.

The same interpretation works when $n=2\ell+1$.  Consider non negative 
UD paths with $2\ell+1$ steps containing $k$ Down steps, 
which have one more Up step as compared to paths counted by the 
set with cardinality $\alpha_{2\ell,k,i}$.  It is 
simple to see that there is a bijection between a non negative 
UD path $P$ counted by $\alpha_{2\ell,k,i}$ and the 
path $P' = P,U$ where
we append an Up step at the end of $P$.  That is, if the last step
of a path counted by $\alpha_{2\ell+1,k,i}$ is an Up step, then
by deleting it, we get an non negative UD path counted by
$\alpha_{2\ell,k,i}$.  However, if the last step is a Down step, 
then after deletion of this, we get a non negative path counted by the 
$\alpha_{2\ell,k-1,i}$.  We only need to check that appending
a Down step at the end does not create a valley at an odd height.  But
this follows from the fact that a path with $2 \ell$ steps and $k-1$
Down steps ends at a point $(2\ell,2\ell-2k+2)$ and so ends 
at a point with even $y$ co-ordinate.  Adding a Down step to 
such a path may create a peak but only at an even height.  Thus,
the set of odd height peaks after addition of a Down step at the end does 
not change.  Thus we get the following counterpart of Lemma
\ref{lem:convol_binom_type}.

\begin{lemma}
\label{lem:convol_binom_type_odd}
With the notation described above, $\alpha_{2\ell+1, k, i}$ is
the cardinality of the set 
of non negative UD paths from $(0,0)$ to $(2\ell+1, 2\ell-2k+1)$ with 
$2\ell-k+1$ Up steps, $k$ Down steps and odd
peaks contained in the set $D_{\ell-i}$.
\end{lemma}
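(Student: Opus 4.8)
The plan is to formalize the argument sketched in the paragraph preceding the statement, reducing the odd case $n = 2\ell+1$ to the even case already established in Lemma~\ref{lem:convol_binom_type}. First I would invoke the Murnaghan--Nakayama recurrence from Lemma~\ref{lem:murn-naka-based-recur}: since for $n = 2\ell+1$ one has $\nmhalf = \ell$, which is the full range of the index $i$, the recurrence gives $\alpha_{2\ell+1,k,i} = \alpha_{2\ell,k,i} + \alpha_{2\ell,k-1,i}$, with the standing convention $\alpha_{2\ell,-1,i} = 0$. By Lemma~\ref{lem:convol_binom_type}, the first summand counts the non negative UD paths of $2\ell$ steps from $(0,0)$ to $(2\ell, 2\ell-2k)$ with $2\ell-k$ Up steps, $k$ Down steps and odd peaks contained in $D_{\ell-i}$, and the second summand counts the analogous paths ending at $(2\ell, 2\ell-2k+2)$ with $2\ell-k+1$ Up steps and $k-1$ Down steps. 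Writing $\mathcal{P}$ for the set the lemma claims to enumerate, it then suffices to produce a bijection between $\mathcal{P}$ and the disjoint union of these two path sets.

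Next I would set up that bijection by deleting, respectively appending, the terminal step. Given $P' \in \mathcal{P}$, look at its last step. If it is an Up step, delete it; the prefix is a non negative UD path of $2\ell$ steps ending at $(2\ell, 2\ell-2k)$ with $k$ Down steps. If it is a Down step, delete it; the prefix has $2\ell$ steps, ends at $(2\ell, 2\ell-2k+2)$, and has $k-1$ Down steps. The two cases are separated by the last step of $P'$, and the inverse maps simply re-append an Up step, respectively a Down step, to a path from the first, respectively second, set. So the only thing left is to check that these operations really move between the claimed path families, i.e.\ that they preserve non negativity and do not alter the set of odd height peaks (nor its being contained in $D_{\ell-i}$).

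That verification is the main --- indeed the only delicate --- point. Non negativity is immediate when an Up step is appended, and when a Down step is appended one uses $k \le \ell$, so the prefix ends at height $2\ell-2k+2 \ge 2$ and the new terminal vertex sits at height $\ge 1$, with all earlier vertices unchanged. For peaks, an Up step occurring at the very end of a path can never be immediately followed by a Down step, so it lies on no peak; hence deleting or appending such a step changes no peak at all. A Down step appended at the end can create a peak only at the junction vertex $x = 2\ell$; here I would invoke the parity fact that in a UD path from the origin the height parity equals the step count parity, so this vertex has the \emph{even} height $2\ell-2k+2$, making the putative new peak even and thus irrelevant to the odd height peak set. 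The same parity remark shows that every odd height peak of a $(2\ell+1)$-step path sits at an odd $x$-coordinate among $1,3,\dots,2\ell-1$, hence inside one of the intervals $s_1,\dots,s_\ell$, so the condition ``odd peaks $\subseteq D_{\ell-i}$'' applies verbatim and is preserved in both directions. Consequently the map is a bijection, giving $|\mathcal{P}| = \alpha_{2\ell,k,i} + \alpha_{2\ell,k-1,i} = \alpha_{2\ell+1,k,i}$, which is the assertion of the lemma.
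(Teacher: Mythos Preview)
Your proposal is correct and follows essentially the same approach as the paper: the paper's argument (given in the paragraph immediately preceding the lemma rather than in a formal proof environment) likewise invokes Lemma~\ref{lem:murn-naka-based-recur} to write $\alpha_{2\ell+1,k,i} = \alpha_{2\ell,k,i} + \alpha_{2\ell,k-1,i}$, interprets each summand via Lemma~\ref{lem:convol_binom_type}, and sets up the bijection by deleting or appending the last step, checking the same parity fact (the endpoint $(2\ell,2\ell-2k+2)$ has even height) to ensure no odd peak is created. Your write-up simply makes the two directions and the non negativity and peak checks more explicit than the paper's sketch.
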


\subsection{Probabilistic Interpretation of Lemma \ref{lem:main_ineq}}
From Lemma  \ref{lem:convol_binom_type} and Lemma
\ref{lem:convol_binom_type_odd}, we get the following 
probabilistic interpretation 
of Lemma \ref{lem:main_ineq} whose straightforward proof 
we omit.

\begin{lemma}
\label{lem:prob_interpret_paths}
Fix a positive integer $n$ and $i \leq \nmhalf$.  Then, the probability 
of a non negative UD path with $n$ total steps and with odd height
peaks contained  in the set $D_{\nhalf-i}$ decreases as the number $k$
of Down steps increases.
\end{lemma}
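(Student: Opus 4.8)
The plan is to recognize that the probability named in the statement is exactly the ratio $\alpha_{n,k,i}/\alpha_{n,k,0}$, so that the assertion reduces to Lemma~\ref{lem:main_ineq}. First I would set up the probability space. For $0 \le k \le \nhalf$, let $\Omega_{n,k}$ be the set of all non negative UD paths with $n$ total steps, $n-k$ of them Up and $k$ of them Down, endowed with the uniform distribution; this is non empty precisely when $k \le \nhalf$, and by Remark~\ref{rem:dim_i-zero-row} it has cardinality $\binom{n}{k} - \binom{n}{k-1} = \alpha_{n,k,0}$. Inside $\Omega_{n,k}$, the event $E_{n,k,i}$ of interest is the set of paths all of whose odd height peaks lie in the set $D_{\nhalf - i}$.

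Next I would count $E_{n,k,i}$. When $n = 2\ell$ is even this is done by Lemma~\ref{lem:convol_binom_type}, and when $n = 2\ell+1$ is odd by Lemma~\ref{lem:convol_binom_type_odd}; in either case $|E_{n,k,i}| = \alpha_{n,k,i}$. It is worth remarking here why the case $i = 0$ is consistent with the cardinality of $\Omega_{n,k}$ computed above: when $i = 0$ the set $D_{\nhalf}$ contains the index of every length-two interval $s_1, \ldots, s_{\nhalf}$, and since every odd height peak of an $n$-step non negative UD path is caused by a ``$U,D$'' pair sitting inside one of these intervals (as in the proofs of Lemmas~\ref{lem:riordan-dyck} and~\ref{lem:convol_binom_type}, such a peak occurs at a point $(x,y)$ with both coordinates odd, so its ``$U,D$'' pair occupies steps $2d-1,2d$ for some $1\le d\le\nhalf$), the condition ``odd peaks $\subseteq D_{\nhalf}$'' is vacuous and $E_{n,k,0} = \Omega_{n,k}$. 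Therefore the probability named in the statement equals
\[
\frac{|E_{n,k,i}|}{|\Omega_{n,k}|} = \frac{\alpha_{n,k,i}}{\alpha_{n,k,0}}.
\]

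Finally I would invoke Lemma~\ref{lem:main_ineq}. Since $i \le \nmhalf \le \nhalf$, and the comparison is between consecutive admissible values $k$ and $k+1$ (so $k+1 \le \nhalf$), for $n \ge 5$ that lemma gives $\alpha_{n,k,i}/\alpha_{n,k,0} \ge \alpha_{n,k+1,i}/\alpha_{n,k+1,0}$; chaining these inequalities over $0 \le k \le \nhalf$ yields precisely the claimed weak decrease of the probability as the number of Down steps grows. The finitely many small cases $n \le 4$ are disposed of by direct inspection (and are in fact already covered by Lemma~\ref{lem:inequality} whenever $i < \nhalf$). I do not anticipate any genuine obstacle: all the combinatorics has been packaged into Lemmas~\ref{lem:convol_binom_type} and~\ref{lem:convol_binom_type_odd} and all the analytic content into Lemma~\ref{lem:main_ineq}. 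The one point requiring care is bookkeeping with the conventions recorded in Remark~\ref{rem:interpret_path_length} --- that the first subscript of $\alpha_{n,k,i}$ is the number of steps, the second is the number of Down steps, and the third encodes via $D_{\nhalf-i}$ which length-two intervals are allowed to host an odd peak --- together with verifying that the normalizing denominator $\alpha_{n,k,0}$ really is the full sample space $\Omega_{n,k}$.
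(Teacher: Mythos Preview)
Your proposal is correct and is precisely the argument the paper has in mind: the paper explicitly presents Lemma~\ref{lem:prob_interpret_paths} as the probabilistic restatement of Lemma~\ref{lem:main_ineq} obtained via Lemmas~\ref{lem:convol_binom_type} and~\ref{lem:convol_binom_type_odd}, and omits the proof as straightforward. Your write-up in fact supplies more detail than the paper does, including the verification that $E_{n,k,0}=\Omega_{n,k}$ and the handling of $n\le 4$.
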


Recall that Remark \ref{rem:syt-nlp-bijection} gives a bijection 
between non negative UD paths with 
$n$ steps and with $k$ down steps and Standard Young Tableaux of 
shape $n-k,k$ (denoted $\SYT(n-k,k)$), we can recast 
Lemma \ref{lem:prob_interpret_paths}
in terms of SYTs.  We translate the notion of peaks at odd heights
to tableaux.  For $T \in \SYT(n-k,k)$ define position $i$ to
be a peak
if $i$ appears in the first row and $i+1$ appears in the second
row.  This is precisely saying that $i \in \DES(T)$ where
$\DES(T)$ is the descent set of $T$, which is a well studied
statistic (see the book by Stanley 
\cite[Chapter 7]{EC2}).  For a descent $i$, to get the height of its 
peak under this mapping, consider $T_{|i}$, the restriction of $T$
to the entries $\{1,2,\ldots,i\}$.  Note that $T_{|i}$ is also
an SYT.  Let $T_{|i} = a_i,b_i$ where $a_i$ and $b_i$ are the number of 
elements in the first and second row of $T_{|i}$ respectively.
Define $\rowdiff(T_{|i}) = 
a_i - b_i$ to be the difference between the number of elements in the 
first row and the number of elements in the second row of $T_{|i}$.
Define an descent $i \in T$ to have even (or odd) height 
if $\rowdiff(T_{|i})$ is even (or odd respectively).
With these definitions, recalling the set $D_{\ell-i}$, we 
can give the SYT version of Lemma \ref{lem:prob_interpret_paths}.

\begin{lemma}
\label{lem:prob_interpret_syt}
Fix a positive integer $n$ and let $i \leq \nmhalf$.  Then, the 
probability that an SYT of shape $n-k,k$ has all its descents 
with odd height in $D_{\nhalf-i}$ decreases as the number $k$
increases (and hence the shape of $T$ changes).
\end{lemma}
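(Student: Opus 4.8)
The plan is to transport Lemma~\ref{lem:prob_interpret_syt} onto the already established path statement, Lemma~\ref{lem:prob_interpret_paths}, through the bijection $T \mapsto P_T$ of Remark~\ref{rem:syt-nlp-bijection}, and then to read off the conclusion from Lemma~\ref{lem:main_ineq}. First I would make the dictionary between tableau descents and path peaks completely explicit. Given $T \in \SYT(n-k,k)$, the path $P_T$ is a non negative UD path with $n$ steps of which exactly $k$ are Down steps. A position $j$ is a descent of $T$, meaning $j$ lies in the first row and $j+1$ in the second row, precisely when the $j$-th step of $P_T$ is $U$ and the $(j+1)$-st step is $D$, which is exactly the condition that $P_T$ has a peak right after step $j$. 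The height of that peak is the number of Up steps minus the number of Down steps among the first $j$ steps of $P_T$, namely $a_j - b_j = \rowdiff(T_{|j})$; so a descent $j$ of $T$ has odd height in the sense of the excerpt exactly when the corresponding peak of $P_T$ has odd height. Because $a_j - b_j \equiv a_j + b_j = j \pmod 2$, such a peak necessarily occurs at an odd abscissa $j = 2d-1$, hence inside the length-two interval $s_d$, so ``all odd-height descents of $T$ lie in $D_{\nhalf - i}$'' matches verbatim the interval-index condition imposed in Lemma~\ref{lem:convol_binom_type}.

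Next I would count both sides. By Lemma~\ref{lem:convol_binom_type} when $n = 2\ell$ and by Lemma~\ref{lem:convol_binom_type_odd} when $n = 2\ell+1$, the number of non negative UD paths on $n$ steps with $k$ Down steps all of whose odd-height peaks lie in $D_{\nhalf - i}$ equals $\alpha_{n,k,i}$; by the dictionary of the previous paragraph this is also the number of $T \in \SYT(n-k,k)$ all of whose odd-height descents lie in $D_{\nhalf - i}$. On the other hand, Remark~\ref{rem:dim_i-zero-row} together with Remark~\ref{rem:syt-nlp-bijection} shows that the total number of SYT of shape $n-k,k$ is $\alpha_{n,k,0}$. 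Hence the probability in the statement is exactly the ratio $\alpha_{n,k,i}/\alpha_{n,k,0}$.

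Finally I would invoke Lemma~\ref{lem:main_ineq}: for $n \ge 5$ and $i \le \nmhalf \le \nhalf$ it gives $\alpha_{n,k,i}/\alpha_{n,k,0} \ge \alpha_{n,k+1,i}/\alpha_{n,k+1,0}$ for every admissible $k$ (the finitely many cases with small $n$ being a direct check), and this is precisely the assertion that the probability weakly decreases as $k$ increases and the shape $n-k,k$ changes. The one delicate point, which I expect to be the main obstacle, is not probabilistic but notational: one must set up the meaning of ``a descent of odd height lies in $D_{\nhalf - i}$'' for tableaux so that it agrees, through $T \mapsto P_T$, with the interval-indexing convention (abscissa $2d-1$ corresponding to index $d$) used in Lemmas~\ref{lem:convol_binom_type} and~\ref{lem:convol_binom_type_odd}; once that translation is pinned down, the rest is routine.
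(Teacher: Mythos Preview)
Your proposal is correct and follows essentially the same route as the paper, which omits the proof as straightforward: translate tableau descents to path peaks via the bijection of Remark~\ref{rem:syt-nlp-bijection}, identify the probability with $\alpha_{n,k,i}/\alpha_{n,k,0}$ using Lemmas~\ref{lem:convol_binom_type} and~\ref{lem:convol_binom_type_odd} together with Remark~\ref{rem:dim_i-zero-row}, and conclude via Lemma~\ref{lem:main_ineq}. Your explicit verification that odd-height descents correspond to odd-height peaks (via $a_j - b_j \equiv j \pmod 2$) and your care with the interval indexing are more detailed than what the paper records, but the underlying argument is the same.
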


By running the arguments backward, it is clear that an 
alternate proof of Lemma \ref{lem:prob_interpret_syt} will give
us an alternate proof of Theorem \ref{thm:two_row_imm_ineqs}.
Thus, it would be interesting to get an alternate proof of 
Lemma \ref{lem:prob_interpret_syt}.

\section*{Acknowledgements}
The second author would like to acknowledge SERB, Government of 
India for providing a National Postdoctoral fellowship with  
file number PDF/2018/000828. 

The last author acknowledges support from project 
SERB/F/252/2019-2020 given by the Science and 
Engineering Research Board (SERB), India.

\bibliographystyle{acm}
\bibliography{main}

\end{document}